\theoremstyle{plain}
\newtheorem{lemma}{Lemma}[section]
\newtheorem{theorem}[lemma]{Theorem}
\newtheorem{proposition}[lemma]{Proposition}
\newtheorem{corollary}[lemma]{Corollary}
\newtheorem{conjecture}[lemma]{Conjecture}
\theoremstyle{definition}
\theoremstyle{remark}
\newtheorem{remark}[lemma]{Remark}
\newtheorem{example}[lemma]{Example}
\DeclareMathOperator{\supp}{supp}
\DeclareMathOperator{\diag}{diag}
\DeclareMathOperator{\Hom}{Hom}
\DeclareMathOperator{\GL}{GL}
\DeclareMathOperator{\Sp}{Sp}
\DeclareMathOperator{\SO}{SO}
\DeclareMathOperator{\SL}{SL}
\DeclareMathOperator{\Spin}{Spin}
\renewcommand{\setminus}{\smallsetminus}
\renewcommand{\emptyset}{\varnothing}
\renewcommand{\leq}{\leqslant}
\renewcommand{\geq}{\geqslant}
\newcommand{\sfA}{\mathsf A}
\newcommand{\sfB}{\mathsf B}
\newcommand{\sfC}{\mathsf C}
\newcommand{\sfBC}{\mathsf{BC}}
\newcommand{\sfD}{\mathsf D}
\newcommand{\sfE}{\mathsf E}
\newcommand{\sfF}{\mathsf F}
\newcommand{\sfG}{\mathsf G}
\newcommand{\calR}{\mathcal R}
\newcommand{\CC}{\mathbb C}
\newcommand{\HH}{\mathbb H}
\newcommand{\NN}{\mathbb N}
\newcommand{\OO}{\mathbb O}
\newcommand{\PP}{\mathbb P}
\newcommand{\QQ}{\mathbb Q}
\newcommand{\RR}{\mathbb R}
\newcommand{\ZZ}{\mathbb Z}
\newcommand{\rmN}{\mathrm N}
\newcommand{\rmZ}{\mathrm Z}
\newcommand{\rmn}{\mathrm n}
\newcommand{\rmsc}{\mathrm{sc}}
\newcommand{\calM}{\mathcal M}
\newcommand{\calX}{\mathcal X}
\title[Multiplication of spherical functions]{On the multiplication of spherical functions\\ of reductive spherical pairs of type A}
\author{Paolo Bravi}
\address{Dipartimento di Matematica, Sapienza Universit\`a di Roma, Piazzale A. Moro 2, 00185 Roma, Italy}
\email{bravi@mat.uniroma1.it}
\author{Jacopo Gandini}
\address{Dipartimento di Matematica, Universit\`a di Bologna, Piazza di Porta San Donato 5, 40126 Bologna, Italy}
\email{jacopo.gandini@unibo.it}
\begin{document}

\maketitle

\begin{abstract}
Let $G$ be a simple complex algebraic group and let $K \subset G$ be a reductive subgroup such that the coordinate ring of $G/K$ is a multiplicity free $G$-module. We consider the $G$-algebra structure of $\CC[G/K]$, and study the decomposition into irreducible summands of the product of irreducible $G$-submodules in $\CC[G/K]$. When the spherical roots of $G/K$ generate a root system of type $\sfA$ we propose a conjectural decomposition rule, which relies on a conjecture of Stanley on the multiplication of Jack symmetric functions. With the exception of one case, we show that the rule holds true whenever the root system generated by the spherical roots of $G/K$ is a direct sum of subsystems of rank one.
\end{abstract}

\section{Introduction}
Let $G$ be a connected semisimple complex algebraic group, and let $K \subset G$ be a reductive subgroup. Recall that the affine variety $G/K$ is called \textit{spherical} if it contains an open orbit under a Borel subgroup of $G$. If this is the case, we will also say that $K$ is a \textit{spherical subgroup}, and that $(G,K)$ is a \textit{reductive spherical pair}. A fundamental example is that of the \textit{symmetric varieties}, namely when $K$ is the set of fixed points of an algebraic involution of $G$.

By a result of Vinberg and Kimelfeld \cite{VK}, reductive spherical subgroups are characterized by the property that the coordinate ring $\CC[G/K]$ is a \textit{multiplicity free} $G$-module (that is, every isotypic component is irreducible). 

Fix a maximal torus $T \subset G$ and a Borel subgroup of $G$ containing $T$, and let $\calX(T)^+$ be the monoid of dominant weights of $G$. For $\lambda \in \calX(T)^+$, we denote by $V(\lambda)$ the irreducible $G$-module of highest weight $\lambda$.

Suppose now that $X$ is an affine spherical homogeneous $G$-variety. Being multiplicity free, the $G$-module structure of the coordinate ring of $X$ is completely captured by the associated \textit{weight monoid}, that is the submonoid $\Lambda^+_X \subset \calX(T)^+$ formed by the highest weights of $\CC[X]$. Such monoid is well understood, and can be combinatorially described (see e.g. \cite{BP}). On the other hand, the description of the $G$-algebra structure of $\CC[X]$ is still a widely open problem.

To be more precise, for $\lambda \in \Lambda_X^+$ denote by $E_X(\lambda) \simeq V(\lambda)$ the irreducible component in $\CC[X]$ of highest weight $\lambda$. Then we have the decomposition
$$
	\CC[X] \; \; = \bigoplus_{\lambda \in \Lambda_X^+} E_X(\lambda) 
$$
Given $\lambda, \mu \in \Lambda_X^+$, let $E_X(\lambda) \cdot E_X(\mu)$ denote the submodule of $\CC[X]$ generated by the products $fg$ with $f \in E_X(\lambda)$ and $g \in E_X(\mu)$. We will deal with the following problem: for which $\nu \in \Lambda^+_X$ does $E_X(\nu)$ appear inside $E_X(\lambda) \cdot E_X(\mu)$?

Clearly there are some bounds on the range of weights which can occur inside $E_X(\lambda) \cdot E_X(\mu)$. For instance, if $\nu$ is such a weight, then $V(\nu)$ must appear as a submodule inside the tensor product $V(\lambda) \otimes V(\mu)$.

A more subtle bound comes from the \textit{spherical roots} of $X$.
Let indeed $\mathcal M_X^\rmn \subset \calX(T)$ be the submonoid generated by the set of differences
$$
	\{ \lambda+\mu-\nu \; | \; \lambda, \mu, \nu \in \Lambda^+_X \text{ and } E_X(\nu) \subset E_X(\lambda) \cdot E_X(\mu) \}
$$
By a fundamental theorem of Knop \cite{Kn}, together with a recent result of Avdeev and Cupit-Foutou \cite{ACF}, the monoid $\mathcal M_X^\rmn$ is free. Let $\Delta_X^\rmn$ be the set of free generators of $\mathcal M_X^\rmn$, called the \textit{spherical roots} of $X$ (or, more precisely, the $\rmn$-\textit{spherical roots}, in the terminology of Section \ref{sec:sph}). Then $\Delta_X^\rmn$ is the base of a root system $\Phi^\rmn_X$, whose rank is called the \textit{rank} of $X$. This root system, together with its Weyl group, encodes important information on the geometry of $X$ and of its equivariant embeddings. When $X$ is a symmetric variety, the root system $\Phi^\rmn_X$ is essentially the restricted root system attached to $X$.

Similarly to the weight monoid $\Lambda^+_X$, the set of spherical roots $\Delta_X^\rmn$ is also well understood (see e.g. \cite{BP}). In the general context of spherical varieties, both the weight monoid and the spherical roots are a fundamental part of the combinatorial invariants attached by Luna to a spherical variety in his seminal paper \cite{Lu1}.

Going back to the problem of decomposing the product of irreducible $G$-modules inside $\CC[X]$, by the very definition of the spherical roots we see the implication
$$
E_X(\nu) \subset E_X(\lambda) \cdot E_X(\mu) \Longrightarrow
\left\{ \begin{array}{c}
V(\nu) \subset V(\lambda) \otimes V(\mu)  \phantom{\Big|} \\
	\lambda + \mu - \nu \in \NN \Delta_X^\rmn
\end{array} \right.
$$
In several cases, the condition on the right hand side seems indeed close to give a characterization of the irreducible constituents of $E_X(\lambda) \cdot E_X(\mu)$. However there are counterexamples (see e.g. Example \ref{ex:counterexample}), thus we are lead to consider other possible descriptions.

In particular, in the present paper we will focus on the case of the affine spherical homogeneous spaces $X = G/K$ for a simple group $G$, whose spherical roots form a root system of type $\sfA$. The classification of the reductive spherical subgroups for a simple group was first obtained by Kr\"amer \cite{Kra}, and more recently in arbitrary characteristic by Knop and R\"ohrle \cite{KR}. For a complete list of the cases we will deal with, see Table \ref{tab:sym_pairs_A} (the symmetric cases) and Table \ref{tab:sph_pairs_A} (the non-symmetric cases). 

When $X$ is a symmetric variety with restricted root system of type $\sfA$, a general conjectural rule for decomposing the product of two irreducible components of $\CC[X]$ follows from a long standing conjecture of Stanley \cite{St} on the multiplication of Jack symmetric functions (see Conjecture \ref{conj:stanley_cor}). Such conjectural description holds true when the rank of $X$ is one, as a consequence of a Pieri rule for the multiplication of Jack symmetric functions proved by Stanley himself \cite{St}.

The connection between Stanley's context and ours stems from the well-known fact that specializing the Jack symmetric functions to a suitable parameter defined by $X$ (which is essentially the multiplicity of the restricted roots) and setting the extra variables to $0$ yields indeed a basis for the space $\CC[X]^K$ of the \textit{spherical functions} on $X$. This point of view was already exploited by Graham and Hunziker \cite{GH}, who dealt with a problem similar to the one considered in this paper in the case of the symmetric varieties with restricted root system of type $\sfA$.

Our main contribution with the present paper is to show how a general conjectural rule for decomposing the product of two irreducible constituents in $\CC[X]$ also follows from Stanley's mentioned conjecture for all the non-symmetric affine spherical homogeneous varieties whose spherical roots generate a root system of type $\sfA$. When this root system is a direct sum of subsystems of rank one, which happens in most cases, we will see (with the exception of one case) that such description holds true thanks to Stanley's Pieri rule for Jack symmetric functions.

To state the mentioned decomposition rule, we will introduce a different normalization for the spherical roots of $X$ and will define a slight variation of $\Phi^\rmn_X$, where we replace some of the spherical roots of $X$ with their half. More precisely, we will associate to $X$ a \textit{based root datum} $\calR_X$ (see Proposition \ref{prop:root_datum} for its definition), having the same Weyl group of $\Phi_X^\rmn$ and with the property that the weight monoid $\Lambda^+_X$ can be naturally regarded as a submonoid of the dominant weights of $\calR_X$.

Let $G_X$ be the complex reductive group associated to $\calR_X$, and for $\lambda \in \Lambda^+_X$ denote by $V_X(\lambda)$ the irreducible $G_X$-module associated to $\lambda$ (regarded as a dominant weight for $G_X$). Then we have the following conjecture (see Conjecture \ref{conj:affine_spherical_A}).

\begin{conjecture} \label{conj:intro}
Let $X$ be an affine spherical homogeneous variety for a simple group $G$ such that $\Phi_X^\rmn$ is of type $\sfA$. Let $\lambda, \mu, \nu \in \Lambda_X^+$, then
$$
E_X(\nu) \subset E_X(\lambda) \cdot E_X(\mu) \Longleftrightarrow
\left\{ \begin{array}{c}
V_X(\nu) \subset V_X(\lambda) \otimes V_X(\mu)  \phantom{\Big|} \\
\lambda + \mu - \nu \in \NN \Delta_X^\rmn
\end{array} \right.
$$
\end{conjecture}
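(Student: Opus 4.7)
The plan is to derive the conjecture from Stanley's conjecture on the multiplication of Jack symmetric functions by passing through the algebra of spherical functions on $X$. First I would construct the based root datum $\calR_X$ and verify that $\Lambda_X^+$ embeds into the dominant weights of $G_X$. The key step is to identify, in each isotypic component $E_X(\lambda) \subset \CC[X]$, a canonical generator whose image in a suitable symmetric-function model is the specialization of the Jack polynomial $J_\lambda^{(\alpha)}$ at a parameter $\alpha$ determined by the multiplicities of the spherical roots encoded in $\calR_X$. Once this identification is in place, multiplication of isotypic components in $\CC[X]$ corresponds to multiplication of Jack polynomials, so Stanley's conjecture on the support of $J_\lambda^{(\alpha)} J_\mu^{(\alpha)}$ in the Jack basis directly yields the conjectured equivalence.

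For the symmetric cases the identification is classical via the theory of zonal spherical functions on a compact real form, as exploited by Graham--Hunziker, and the conjecture reduces to Stanley's conjecture at the Jack parameter $\alpha$ matching the root multiplicities. For non-symmetric $K$, I would proceed case by case using Tables \ref{tab:sym_pairs_A} and \ref{tab:sph_pairs_A}: for each non-symmetric pair I would compare $\calR_X$ and the multiplicity data with those of a symmetric pair producing the same parameter $\alpha$, and transport the symmetric statement to the non-symmetric one. The reduction is not quite formal since non-symmetric pairs need not admit a compact real form interpretation, so one has to construct the identification with Jack polynomials intrinsically from the combinatorial data of $\calR_X$.

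When $\Phi_X^\rmn$ is a direct sum of rank-one subsystems, I would replace the open Stanley conjecture with Stanley's proven Pieri rule. The Jack multiplication then factors along the rank-one components, and in each component the Pieri rule gives an explicit combinatorial description of the support of $J_\lambda^{(\alpha)} J_\mu^{(\alpha)}$. It then remains to check that this description matches the tensor product decomposition $V_X(\nu) \subset V_X(\lambda) \otimes V_X(\mu)$ in $G_X$ together with the integrality condition $\lambda + \mu - \nu \in \NN \Delta_X^\rmn$; this is a finite, explicit verification that can be carried out uniformly for all relevant pairs once the parameter $\alpha$ has been identified.

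The main obstacle is twofold. First, whenever $\Phi_X^\rmn$ is not a direct sum of rank-one factors, the argument remains conditional on the still-open Stanley conjecture, so no unconditional proof is presently available in that generality. Second, and more technically, identifying the correct Jack parameter $\alpha$ intrinsically from $\calR_X$ is not automatic in the non-symmetric setting and, as flagged in the introduction, there is one exceptional case in which either the parameter does not match any symmetric pair or the identification of the canonical generators with Jack polynomials breaks down; handling this case will require a separate, ad hoc argument, likely involving an explicit analysis of the spherical functions via branching rules.
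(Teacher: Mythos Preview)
Your overall architecture (reduce the non-symmetric cases to symmetric ones, then invoke Stanley) matches the paper's, but the mechanism you propose for the reduction is not the one that works, and this is a genuine gap.

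You suggest, for each non-symmetric pair, to ``compare $\calR_X$ and the multiplicity data with those of a symmetric pair producing the same parameter $\alpha$, and transport the symmetric statement to the non-symmetric one.'' But matching the combinatorial data $\calR_X$ and a putative Jack parameter gives no a priori identification of the spherical functions on $X$ with Jack polynomials; there is no general theorem saying that a non-symmetric affine spherical variety whose root datum agrees with that of a symmetric variety has the same spherical-function algebra. The paper does not attempt any such intrinsic identification. Instead, for each non-symmetric $X=G/K$ (with the exception of $\sfF_4/\Spin(9)$) it produces \emph{geometric} data: an overgroup $\widehat G\supset G$ with a symmetric subgroup $\widehat K$ such that $G/K\simeq \widehat G/\widehat K$ as $G$-varieties (coming from an Onishchik--Liebeck--Saxl--Seitz factorization $\widehat G=G\cdot\widehat K$), and a reductive subgroup $H$ with $K\subset H\subset G$ and $H/K$ symmetric. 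The isomorphism $\CC[G/K]\simeq\CC[\widehat G/\widehat K]$ is what literally makes the spherical functions on $X$ into spherical functions on a symmetric variety; the embedding $H/K\hookrightarrow G/K$ is needed because several $G$-isotypic components can sit in a single $\widehat G$-isotypic component, and restriction to $H/K$ separates them. The pair of maps $(\hat\varphi,\bar\varphi)$ is then shown to be an isogeny $\calR_{G/K}\to\calR_{\widehat G/\widehat K}\oplus\calR_{H/K}$, and Theorem~\ref{teo:decomposizione} converts the multiplication problem on $G/K$ into the conjunction of the multiplication problems on the two symmetric varieties.

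Consequently your ``factor along rank-one components'' picture is also off: the decomposition is not along the irreducible factors of $\Phi_X$ but into the two symmetric pieces $\widehat G/\widehat K$ and $H/K$; the hypothesis that $\Phi_X$ is a sum of rank-one subsystems is used only to guarantee that \emph{both} symmetric pieces have restricted root system of type $\sfA_1$, so that Stanley's Pieri rule (rather than the full conjecture) suffices for each. The exceptional case you allude to is $\sfF_4/\Spin(9)$, where no such factorization is available and the conjecture rests on computational evidence.
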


Our main theorem is the following (see Proposition \ref{prop:indec}, Corollary \ref{cor:decomposizione} and Proposition \ref{prop:altri}).

\begin{theorem}
Let $X$ be an affine spherical homogeneous variety for a simple group $G$. Suppose that $\Phi_X^\rmn$ is of type $\sfA$, and that $X \neq \sfF_4/\Spin(9)$.
\begin{itemize}
	\item[i)] If $\Phi_X^\rmn$ is a direct sum of rank one subsystems, then Conjecture \ref{conj:intro} holds for $X$.
	\item[ii)] If Stanley's conjecture on the multiplication of Jack symmetric functions holds true, then Conjecture \ref{conj:intro} holds true for $X$.
\end{itemize}
\end{theorem}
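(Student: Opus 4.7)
The plan is to proceed case by case along the classification of affine spherical homogeneous spaces with $\sfA$-type spherical roots (Tables \ref{tab:sym_pairs_A} and \ref{tab:sph_pairs_A}), and in each case reduce the decomposition of products $E_X(\lambda)\cdot E_X(\mu)$ inside $\CC[X]$ to a statement about multiplication of Jack symmetric functions. The key observation is that $E_X(\nu)\subset E_X(\lambda)\cdot E_X(\mu)$ if and only if a specific coefficient in the expansion of $\varphi_\lambda\varphi_\mu$ does not vanish, where $\varphi_\lambda$ is the canonical generator of the one-dimensional $K$-isotypic component of $E_X(\lambda)$; after restriction to a maximal $T$-split torus of $G/K$ this becomes a statement about (specialised) Jack polynomials.

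For the symmetric cases this is the classical Graham--Hunziker setup: the $\varphi_\lambda$ are specialisations of Jack symmetric polynomials at a parameter $\alpha$ determined by the multiplicities of the restricted roots of $X$, and Stanley's conjecture on the multiplication of Jack polynomials translates directly into the combined conditions ``$V_X(\nu)\subset V_X(\lambda)\otimes V_X(\mu)$'' and ``$\lambda+\mu-\nu\in\NN\Delta_X^\rmn$'' of Conjecture \ref{conj:intro}. For the non-symmetric cases my strategy would be to exhibit, for every entry of Table \ref{tab:sph_pairs_A} other than $\sfF_4/\Spin(9)$, a symmetric variety $X'$ with the same weight monoid $\Lambda^+_{X'}=\Lambda^+_X$ and the same based root datum $\calR_{X'}=\calR_X$. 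Since Conjecture \ref{conj:intro} depends on $X$ only through these invariants, such a matching transfers the decomposition rule from $X'$ to $X$; the case-by-case verification is presumably the content of Proposition \ref{prop:altri}.

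Part~(i) is the specialisation of this discussion in which Stanley's full conjecture is replaced by his already proved Pieri rule. When $\Phi_X^\rmn$ splits as a direct sum of rank one pieces, any product $E_X(\lambda)\cdot E_X(\mu)$ can be reached inductively by repeatedly multiplying by components whose highest weight is a fundamental weight of $\calR_X$ lying in a single rank one factor, and each such step is governed by Stanley's Pieri rule within that factor. This is the expected mechanism behind Proposition \ref{prop:indec} and Corollary \ref{cor:decomposizione}.

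The main obstacle I foresee is the combinatorial matching underlying the non-symmetric case together with the bookkeeping of the renormalisation passing from $\Delta_X^\rmn$ to the simple roots of $\calR_X$: some $\rmn$-spherical roots get halved in the root datum, and this halving must be matched with a corresponding shift in the Jack parameter in order for the correspondence to produce the correct Littlewood--Richardson-type positivity. The exclusion of $\sfF_4/\Spin(9)$ reflects the fact that its combinatorial invariants do not coincide with those of any symmetric spherical pair of type $\sfA$, so that this case seemingly requires a genuinely independent argument.
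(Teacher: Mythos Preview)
Your treatment of the symmetric cases is essentially the paper's, but your approach to the non-symmetric cases contains a genuine gap.

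You propose to find, for each non-symmetric $X$, a symmetric variety $X'$ with $\Lambda^+_{X'}=\Lambda^+_X$ and $\calR_{X'}=\calR_X$, and then assert that ``Conjecture \ref{conj:intro} depends on $X$ only through these invariants''. This is false: the \emph{right}-hand side of the conjecture is indeed determined by $\calR_X$ and $\Lambda^+_X$, but the \emph{left}-hand side --- whether $E_X(\nu)\subset E_X(\lambda)\cdot E_X(\mu)$ --- depends on the actual $G$-algebra structure of $\CC[X]$, which is not determined by the weight monoid alone. Two spherical varieties with isomorphic combinatorial invariants need not have the same multiplication of spherical modules, so a mere coincidence of invariants does not transfer the decomposition rule. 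Moreover, in most non-symmetric cases (e.g.\ Sph.A6, where $\Lambda^+_X$ has rank~$2$ while $\Phi_X$ has rank~$1$) the root datum $\calR_X$ is not semisimple, hence cannot equal $\calR_{X'}$ for any symmetric $X'$ of type~$\sfA$.

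What the paper actually does is different. For each non-symmetric case (except $\sfF_4/\Spin(9)$ and three cases reduced to companions) it produces an overgroup $\widehat G\supset G$ with a symmetric subgroup $\widehat K$ such that $G/K\simeq\widehat G/\widehat K$ as $G$-\emph{varieties} --- so the coordinate rings literally coincide --- together with a reductive subgroup $H$ with $K\subset H\subset G$ such that $H/K$ is also symmetric. The induced maps $\hat\varphi,\bar\varphi$ on weight lattices give an \emph{isogeny} $\calR_{G/K}\to\calR_{\widehat G/\widehat K}\oplus\calR_{H/K}$ (Proposition \ref{prop:isogeny}), and Theorem \ref{teo:decomposizione} shows that $E_{G/K}(\nu)\subset E_{G/K}(\lambda)\cdot E_{G/K}(\mu)$ if and only if the analogous inclusions hold simultaneously for $\widehat G/\widehat K$ and for $H/K$. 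Thus a \emph{pair} of symmetric varieties is needed in the decomposable cases, and the argument rests on an honest isomorphism of coordinate rings together with injectivity of $(\hat\varphi,\bar\varphi)$, not on a matching of invariants. (Proposition \ref{prop:altri}, which you cite, plays a different role: it reduces Sph.A7, Sph.A9, Sph.A11 to their companions Sph.A6, Sph.A8, Sph.A10.)

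Your mechanism for part (i) is also off. The paper does not argue by iterated Pieri multiplications inside $\CC[X]$. Rather, once the problem is reduced to the symmetric varieties $\widehat G/\widehat K$ and $H/K$, the hypothesis that $\Phi^\rmn_X$ is a direct sum of rank-one pieces forces each of these to have restricted root system of type~$\sfA_1$; in rank one every spherical weight corresponds to a single-row partition, so Stanley's Pieri rule settles Conjecture \ref{conj:stanley_cor} for each factor outright, without any induction.
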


In the case of $\sfF_4/\Spin(9)$, which remains excluded from the previous theorem, Conjecture \ref{conj:intro} is based on computational experiments.

The strategy that we will adopt to prove the previous theorem is somehow homogeneous in the various cases of Table \ref{tab:sph_pairs_A}. Indeed we will show that in all cases (with the exception of $\sfF_4/\Spin(9)$, and three other cases that can be easily reduced to some other case) we can find a reductive overgroup $\widehat G \supset G$ with a symmetric $\widehat G$-variety $Y$ and a reductive subgroup $H \subset G$ with a symmetric $H$-variety $Z$, respectively endowed with a $G$-equivariant isomorphism and with a $H$-equivariant embedding
$$
	Z \hookrightarrow X \stackrel{\sim}{\longrightarrow} Y .
$$
Both the symmetric varieties $Y$ and $Z$ will have a restricted root system of type $\sfA$, and we will see that the previous maps induce an isogeny from the based root datum $\calR_X$ to the direct sum $\calR_Y \oplus \calR_Z$ (see Proposition \ref{prop:isogeny}).

This will allow us to show that the validity of Conjecture \ref{conj:intro} in the various cases follows from the analogous statement for the symmetric varieties with restricted root system of type $\sfA$, which in turn follows from Stanley's conjecture, and from Stanley's Pieri rule when $\Phi_X^\rmn$ is a direct sum of rank one root subsystems.

The fact that the $G$-action on $X$ can be extended to a $\widehat G$-action making $X$ into a symmetric $\widehat G$-variety will be basically a consequence of a suitable factorization of $\widehat G$ as a product of two reductive subgroups (one of them being $G$, and the other one a symmetric subgroup of $\widehat G$ with the desired property). Such factorizations were first classified by Onishchik \cite{On}, and in arbitrary characteristic by Liebeck, Saxl and Seitz \cite{LSS}. 

In the last section we will also show that the same techinque described above can also be applied in the case of the spherical variety $\SO(2n+1)/\GL(n)$ (whose spherical root system is not of type $\sfA$), to decompose its root datum into the direct sum of those associated with two symmetric varieties of Hermitian type (see Theorem \ref{teo:isogeny-dec}). This leads us to consider the decomposition problem also for another class of reductive spherical pairs $(G,K)$, namely when $K$ is a spherical Levi subgroup of $G$, in which case $\Phi_{G/K}^\rmn$ is of type $\sfB$ or $\sfC$. Here the problem seems quite unexplored, even in the symmetric case. We show that the case of the spherical Levi subgroups reduces to that of the symmetric Levi subgroups (that is, to the case of the symmetric varieties of Hermitian type), and we formulate a conjectural rule for the decomposition of the product of two irreducible components in $\CC[G/K]$ in this case as well (see Conjecture \ref{conj:hermitiano}). \\

\textit{Acknowledgments.} We thank the anonymous referees for useful comments.

\section{Notation and preliminaries}\label{sec:gen}

If $H$ is a connected algebraic group, we will denote by $\calX(H)$ its character lattice. If $H$ acts on a vector space $V$, we will denote by $V^H$ the invariant subspace and by $V^{(H)}$ the semi-invariant subset, that is the union of the semi-invariant subspaces 
$$
	V^{(H)}_\chi = \{v \in V \; | \; h.v = \chi(h)v \quad \forall h \in H\}
$$
with $\chi \in \calX(H)$. If $\Xi$ is a lattice we will denote by $\Xi^\vee$ its dual lattice, namely  $\Xi^\vee= \Hom_\ZZ(\Xi, \ZZ)$.

Let $G$ be a semisimple complex algebraic group. If $K \subset G$, we will denote by $\rmN_G(K)$ the normalizer of $K$ in $G$, and by $Z_G(K)$ the centralizer of $K$ in $G$. Fix a maximal torus $T \subset G$ and a Borel subgroup $B$ contianing $T$. We denote by $\Phi$ the root system of $G$ associated to $T$, and by $\Delta$ the base defined by $B$ and by $\Phi = \Phi^+ \sqcup \Phi^-$ the corresponding decomposition into positive and negative roots. When $\Phi$ is an irreducible root system of rank $n$, we will order the set of simple roots $\Delta = \{\alpha_1, \ldots, \alpha_n\}$ following Bourbaki's notation.

The monoid of dominant weights of $G$ will be denoted by $\calX(T)^+$. Given $\lambda \in \calX(T)^+$, the irreducible representation of $G$ of highest weight $\lambda$ will be denoted by $V_G(\lambda)$, or simply by $V(\lambda)$ when no ambiguity is possible. 

Let $G/K$ be an affine spherical variety. Equivalently, $K$ is a reductive group and the coordinate ring $\CC[G/K]$ is multiplicity free as a $G$-module. By the multiplicity-free property, the description of the $G$-module structure of $\CC[G/K]$ is equivalent to the description of the monoid of the \textit{spherical weights}
$$
	\Lambda^+_{G/K} = \{\lambda \in \calX(T)^+ \; | \; V(\lambda)^K \neq 0\}.
$$
Since $K$ is reductive, notice that $V(\lambda)$ admits a non-zero $K$-invariant vector if and only if the same holds for its dual $V(\lambda)^*$. Therefore, with this notation, we have the decomposition
$$
	\CC[G/K] = \CC[G]^K \simeq \bigoplus_{\lambda \in \calX(T)^+} V(\lambda)^* \otimes V(\lambda)^K \simeq \bigoplus_{\lambda \in \Lambda_{G/K}^+} V(\lambda).
$$
For every $\lambda \in \Lambda_{G/K}^+$ we denote by $E_{G/K}(\lambda)\simeq V(\lambda)$ the corresponding isotypic (irreducible) component in $\CC[G/K]$. 

The previous description can be generalized from the realm of $K$-invariant functions to that of $K$-semi-invariant functions, with respect to the right action of $G$ on itself. Indeed, for every character $\chi \in \calX(K)$ the eigenspace $\CC[G]^{(K)}_\chi$ is also a multiplicity free $G$-module. Inside $\CC[G]^{(K)}_\chi$, every $\lambda \in \Lambda^+$ with $\big(V(\lambda)^*\big)^{(K)}_\chi \neq 0$ yields an isotypic component $E_{G/K}(\lambda,\chi) \simeq V(\lambda)$. In this setting, the relevant weight monoid is that of the \textit{quasi-spherical weights}
$$
	\Omega^+_{G/K} = \{\lambda \in \calX(T)^+ \; | \; V(\lambda)^{(K)} \neq 0\}.
$$ 

The algebra structure of the invariant ring $\CC[G]^K$ is encoded by the associated \textit{spherical functions}. For every $\lambda \in \Lambda^+_{G/K}$, fix a non-zero element $f_\lambda \in E_{G/K}(\lambda)^K$. The $K$-invariant space in $V(\lambda)$ is one dimensional, thus the spherical functions $\{f_\lambda\}_{\lambda \in \Lambda_{G/K}^+}$ form a basis for the space of $K$-invariant functions
$$\CC[G]^{K\times K} = \bigoplus E_{G/K}(\lambda)^K.$$

More explicitly, if $\lambda \in \Lambda_{G/K}^+$, the spherical function $f_\lambda$ can be constructed (up to a scalar factor) as a matrix coefficient by taking non-zero $K$-invariants vectors $v_\lambda \in V(\lambda)$ and $\psi_\lambda \in V(\lambda)^*$ and setting
$$
	f_\lambda(gK) = \langle \psi_\lambda,g.v_\lambda \rangle.
$$

The decomposition of the product of irreducible $G$-submodules inside $\CC[G/K]$ is encoded in the structure constants for the multiplication of the corresponding spherical functions (see {\cite[Theorem 3.2]{Ruit}}).

\begin{proposition} 
If $\lambda, \mu, \nu \in \Lambda_{G/K}^+$ and if $f_\lambda f_\mu = \sum a_{\lambda,\mu}^\nu  f_\nu$, then
$$
E_{G/K}(\nu) \subset E_{G/K}(\lambda) \cdot E_{G/K}(\mu) \Longleftrightarrow a_{\lambda,\mu}^\nu \neq 0.
$$
\end{proposition}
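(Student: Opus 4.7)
One direction is formal. If $a_{\lambda,\mu}^\nu \neq 0$, then $f_\nu$ occurs as a summand in $f_\lambda f_\mu$, hence lies in the $G$-stable submodule $E_{G/K}(\lambda) \cdot E_{G/K}(\mu)$; since $f_\nu$ is a non-zero vector in the irreducible $G$-module $E_{G/K}(\nu)$, the containment $E_{G/K}(\nu) \subset E_{G/K}(\lambda) \cdot E_{G/K}(\mu)$ follows. The content of the statement is the converse.

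For the converse, I would realize $E_{G/K}(\lambda)$ concretely via matrix coefficients $g \mapsto \langle \psi, g v_\lambda \rangle$ with $\psi \in V(\lambda)^*$, so $f_\lambda$ corresponds to $\psi_\lambda$, and compute $(f_\lambda f_\mu)(g) = \langle \psi_\lambda \otimes \psi_\mu, g(v_\lambda \otimes v_\mu)\rangle$. Decompose $V(\lambda) \otimes V(\mu) = \bigoplus_\nu M_\nu \otimes V(\nu)$ as a $G$-module, with $M_\nu$ the multiplicity space on which $K \subset G$ acts trivially. Since $V(\nu)^K = \CC v_\nu$ is one-dimensional, the $K$-invariant element $v_\lambda \otimes v_\mu$ has $\nu$-isotypic projection of the form $\alpha_\nu \otimes v_\nu$ for a unique $\alpha_\nu \in M_\nu$; analogously, $\psi_\lambda \otimes \psi_\mu$ projects to $\beta_\nu \otimes \psi_\nu$ with $\beta_\nu \in M_\nu^*$. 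From these decompositions one reads off two identities: (i) the inclusion $E_{G/K}(\nu) \subset E_{G/K}(\lambda) \cdot E_{G/K}(\mu)$ is equivalent to $\alpha_\nu \neq 0$, since the $\nu$-component of the image of the multiplication map is parametrized by matrix coefficients with right entry $\alpha_\nu \otimes v_\nu$; and (ii) $a_{\lambda,\mu}^\nu = \langle \beta_\nu, \alpha_\nu \rangle$, up to a fixed non-zero normalization of $f_\nu$.

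The key step, and main obstacle, is to show that $\alpha_\nu \neq 0 \Rightarrow \langle \beta_\nu, \alpha_\nu \rangle \neq 0$; this is not formal, since two dually paired vectors can each be non-zero yet pair to zero. To close it, I would pass to a compact real form $U \subset G$ with $K_U = U \cap K$ a compact real form of $K$, fix $U$-invariant Hermitian inner products on each $V(\nu)$, and normalize so that, under the induced $\CC$-linear isomorphism $V(\nu)^* \cong \overline{V(\nu)}$, the invariant $\psi_\nu$ corresponds to $\overline{v_\nu}$. The tensor Hermitian form on $V(\lambda) \otimes V(\mu)$ then makes the isotypic decomposition orthogonal, so that $\psi_\lambda \otimes \psi_\mu$ corresponds to $\overline{v_\lambda \otimes v_\mu}$ componentwise; this forces $\beta_\nu = \overline{\alpha_\nu}$, and consequently $a_{\lambda,\mu}^\nu$ is a positive multiple of $\|\alpha_\nu\|^2$, strictly positive exactly when $\alpha_\nu \neq 0$. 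Essentially the same conclusion can be reached more intrinsically by interpreting $a_{\lambda,\mu}^\nu$ as the $L^2$-inner product $\langle f_\lambda f_\mu, f_\nu\rangle_{U/K_U}$ on a compact real form and applying Schur orthogonality; once the various normalizations are organized coherently, the positivity of a Hermitian norm is what closes the argument.
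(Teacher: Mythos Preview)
The paper does not supply its own proof of this proposition; it simply cites \cite[Theorem~3.2]{Ruit}. So there is no argument in the paper to compare against. Your sketch is correct and is in fact the standard way this result is established: realize the spherical functions as matrix coefficients, decompose $v_\lambda\otimes v_\mu$ and $\psi_\lambda\otimes\psi_\mu$ along the isotypic decomposition of $V(\lambda)\otimes V(\mu)$, and then use a compact real form and a $U$-invariant Hermitian structure to force the pairing $\langle\beta_\nu,\alpha_\nu\rangle$ to be a positive multiple of $\|\alpha_\nu\|^2$. Your identification of the non-formal step (that two nonzero dually paired vectors could in principle pair to zero, so positivity is genuinely needed) is exactly right.

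Two small remarks. First, a slip of the pen: $M_\nu$ is the multiplicity space on which $G$ (not merely $K$) acts trivially; you use this implicitly when you pull $g$ past $\alpha_\nu$ in computing $\langle\beta_\nu\otimes\psi_\nu,\,g(\alpha_\nu\otimes v_\nu)\rangle$. Second, your claim (i), that $E_{G/K}(\nu)\subset E_{G/K}(\lambda)\cdot E_{G/K}(\mu)$ is equivalent to $\alpha_\nu\neq 0$, is precisely the content of the proposition immediately following in the paper (Proposition~\ref{prop:supporto_invarianti}, from \cite{CLM,BGM}); so your argument effectively derives the first proposition from an explicit version of the second together with the positivity trick, which is a clean way to package things.
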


More generally, a similar characterization can be given in terms of the support of the tensor product of the $K$-semi-invariant vectors of the quasi-spherical representations (see {\cite[Lemma 19]{CLM}, \cite[Lemma 1.2]{BGM}}).

\begin{proposition}\label{prop:supporto_invarianti}
Let $\lambda, \mu, \nu \in \Omega_{G/K}^+$ and let $\chi, \chi' \in \calX(K)$ be such that the semi-invariant subspaces $V(\lambda)^{(K)}_\chi$ and $V(\mu)^{(K)}_{\chi'}$ are non-zero. Let $v_\lambda \in V(\lambda)^{(K)}_\chi$ and $v_\mu \in V(\mu)^{(K)}_{\chi'}$ be non-zero, and let $\pi_\nu : V(\lambda) \otimes V(\mu) \rightarrow V[\nu]$ denote the projection onto the isotypic component $V[\nu] \subset V(\lambda) \otimes V(\mu)$ of highest weight $\nu$. Then
$$
E_{G/K}(\nu,\chi+\chi') \subset E_{G/K}(\lambda,\chi) \cdot E_{G/K}(\mu,\chi') \Longleftrightarrow \pi_\nu(v_\lambda \otimes v_\mu)\neq 0.
$$
\end{proposition}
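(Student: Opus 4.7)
The plan is to realize functions in each isotypic component as matrix coefficients, then track the product through the decomposition of $V(\lambda)\otimes V(\mu)$ into isotypic components. The key input is the multiplicity-freeness of $\CC[G]^{(K)}_{\chi+\chi'}$ as a $G$-module, a standard consequence of the sphericity of $G/K$.

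By Peter-Weyl, the matrix-coefficient map identifies the $K$-semi-invariant subspace $E_{G/K}(\lambda,\chi)$ with the span of functions $g \mapsto \langle \psi, g.v_\lambda\rangle$ for $\psi$ varying in $V(\lambda)^*$; the analogous description holds for $E_{G/K}(\mu,\chi')$ with $v_\mu$. Products of such matrix coefficients satisfy
$$
\langle \psi, g.v_\lambda\rangle \cdot \langle \phi, g.v_\mu\rangle
= \langle \psi \otimes \phi,\; g.(v_\lambda \otimes v_\mu)\rangle,
$$
so the submodule $E_{G/K}(\lambda,\chi) \cdot E_{G/K}(\mu,\chi')$ is the $G$-span, as $\psi$ and $\phi$ vary, of matrix coefficients of $V(\lambda)\otimes V(\mu)$ attached to the $K$-semi-invariant vector $v_\lambda\otimes v_\mu$ (of character $\chi+\chi'$).

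Now decompose $V(\lambda)\otimes V(\mu)=\bigoplus_\nu V[\nu]$, and dually $V(\lambda)^*\otimes V(\mu)^* = \bigoplus_\nu V[\nu]^*$, with $G$-equivariant projections $\pi_\nu$ and $\pi_\nu^\vee$ respectively. The above matrix coefficient splits as
$$
\langle \psi\otimes\phi,\; g.(v_\lambda\otimes v_\mu)\rangle
= \sum_\nu \langle \pi_\nu^\vee(\psi\otimes\phi),\; g.\pi_\nu(v_\lambda\otimes v_\mu)\rangle.
$$
Since $\pi_\nu$ is $K$-equivariant, each non-zero $\pi_\nu(v_\lambda\otimes v_\mu)$ remains a $K$-semi-invariant of character $\chi+\chi'$, so the $\nu$-summand lies in the $V(\nu)$-isotypic part of $\CC[G]^{(K)}_{\chi+\chi'}$, which by multiplicity-freeness equals either $E_{G/K}(\nu,\chi+\chi')$ or zero.

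Finally, one checks that, as $\psi$ and $\phi$ vary, the $\nu$-summand can be made non-zero precisely when $\pi_\nu(v_\lambda\otimes v_\mu)\neq 0$. The ``only if'' direction is immediate. For the converse, if $\pi_\nu(v_\lambda\otimes v_\mu)\neq 0$, the surjectivity of $\pi_\nu^\vee$ together with the non-degeneracy of the pairing $V[\nu]^*\otimes V[\nu]\to\CC$ let one choose $\psi\otimes\phi$ so that $\langle \pi_\nu^\vee(\psi\otimes\phi),\pi_\nu(v_\lambda\otimes v_\mu)\rangle \neq 0$, yielding a non-vanishing matrix coefficient in $E_{G/K}(\nu,\chi+\chi')$. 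The only genuinely non-trivial ingredient is the multiplicity-freeness of the semi-invariant space $\CC[G]^{(K)}_{\chi+\chi'}$, which is a classical consequence of the sphericity of $G/K$ and will be cited rather than reproved.
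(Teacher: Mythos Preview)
The paper does not supply its own proof of this proposition; it simply cites \cite[Lemma~19]{CLM} and \cite[Lemma~1.2]{BGM}. Your matrix-coefficient argument---realize each $E_{G/K}(\lambda,\chi)$ via matrix coefficients against the fixed $K$-semi-invariant vector, observe that products of matrix coefficients are matrix coefficients of the tensor product, then split along the isotypic decomposition and invoke multiplicity-freeness of $\CC[G]^{(K)}_{\chi+\chi'}$---is precisely the standard proof found in those references, and it is correct.
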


\section{Multiplication of spherical functions of symmetric pairs of type A}\label{sec:sym}

Let $G$ be semisimple and simply connected, and let $\vartheta: G \rightarrow G$ be an algebraic involution. We denote by $K = G^\vartheta$ the set of fixed points. Then $K$ is a connected reductive subgroup of $G$, which has finite index in $\rmN_G(K)$. 

Let $A \subset G$ be a maximal split torus, that is $\vartheta(a) = a^{-1}$ for all $a \in A$ and $A$ is maximal with this property. Fix a maximal torus $T$ containing $A$, then $\vartheta(T) = T$. Let us denote by the same letter $\vartheta$ the induced involution on $\mathfrak t^*$, then $\vartheta(\Phi) = \Phi$ and the Killing form is preserved by $\vartheta$. We also choose the set of positive roots and the Borel subgroup $B \supset T$ in such a way that $\vartheta(\alpha) \in \Phi^-$ whenever $\alpha \in \Phi^+$ and $\alpha \neq \vartheta(\alpha)$ (see \cite[Lemma~1.2]{CP}).

Denote $X = G/K$, and let $A_X \simeq A/A\cap K$ be the image of $A$ in $X$. Notice that
$$
	A \cap K = \{a \in A \; | \; a^2 = 1\}
$$
thus $\calX(A_X) = 2\calX(A)$ is identified with the lattice 
\[
	\Lambda_X = \{\chi - \vartheta(\chi) \; | \; \chi \in \calX(T)\} .
\]

Define 
$$
	\widetilde \Phi_X = \{\alpha - \vartheta(\alpha) \; | \; \alpha \in \Phi, \alpha \neq \vartheta(\alpha)\}
$$
the set of restricted roots of $X$. Then $\widetilde \Phi_X$ is a (possibily non-reduced) root system in $\Lambda_X \otimes \RR \simeq \calX(A) \otimes \RR$, with base
$$
	\widetilde \Delta_X = \{\alpha - \vartheta(\alpha) \; | \; \alpha \in \Delta, \ \vartheta(\alpha) \neq \alpha \}
$$
and with Weyl group
$$W_X \simeq \rmN_G(A)/\rmZ_G(A) \simeq \rmN_K(A)/\rmZ_K(A).$$

The weight lattice and the root lattice of $\widetilde\Phi_X$ are respectively identified with $\Lambda_X \simeq \calX(A_X)$ and with $\calX(A/\rmN_A(K)) \subset \calX(A_X)$ (see \cite[Lemmas~2.3 and 3.1]{Vu2}). Finally, the monoid of the spherical weights $\Lambda_X^+$ is obtained by intersecting the monoid of the dominant weights with the sublattice of $\calX(T)$ defined by $\calX(A_X)$ (see \cite[Th\'eor\`eme~3]{Vu1}):
$$
\Lambda_X^+ = \Lambda_X \cap \calX(T)^+.
$$

By a result of Richardson \cite[Corollary 11.5]{Ric}, the restriction of functions yields an algebra isomorphism
$$
	\CC[X]^K \stackrel{\sim}{\longrightarrow} \CC[A_X]^{W_X}.
$$

Up to normalization, the restriction of the spherical function $f_\lambda$ to $A_X$ is a specialization of the Jacobi polynomial $P_\lambda^{(k)}$ associated with the root system $\widetilde \Phi_X$ (see \cite[Section~8.4]{HO}). More precisely, let $m = (m_{\widetilde \alpha})$ be the multiplicity function of the restricted roots, that is, for $\widetilde \alpha \in \widetilde \Phi_X$,
$$
	m_{\widetilde \alpha} = | \{\beta \in \Phi \; | \; \beta - \vartheta(\beta) = \widetilde \alpha\}|.
$$
Then up to a scalar factor, the restriction ${f_\lambda}_{|A_X}$ coincides with the specialized Jacobi polynomial $P_\lambda^{(m/2)}$.

Therefore, the decomposition of the products of the irreducible $G$-modules inside $\CC[X]$ is encoded in the structure constants for the multiplication of the corresponding specialized Jacobi polynomials:

If $\lambda, \mu, \nu \in \Lambda_X^+$ and if $P_\lambda^{(m/2)} \, P_\mu^{(m/2)} = \, \sum a_{\lambda,\mu}^\nu  P_\nu^{(m/2)}$, then
$$
E_X(\nu) \subset E_X(\lambda) \cdot E_X(\mu) \Longleftrightarrow a_{\lambda,\mu}^\nu \neq 0.
$$

Let $\alpha \in \Phi$ be such that $\alpha\neq \vartheta(\alpha)$: then either $\vartheta(\alpha) = -\alpha$, or $\alpha$ and $\vartheta(\alpha)$ are orthogonal, or $\langle \vartheta(\alpha), \alpha^\vee \rangle = 1$ (in which case $\alpha - \vartheta(\alpha) \in \Phi^+$), see \cite[Lemma~2.3]{Vu2}. Thus if $\sigma \in \widetilde \Delta_X$, say $\sigma = \alpha - \vartheta(\alpha)$, we define the coroot $\sigma^\vee \in \Lambda_X^\vee$ as follows:
\begin{itemize}
	\item[i)] If $\vartheta(\alpha) = -\alpha$, then 
$$
	\sigma^\vee = \tfrac{1}{2}{\alpha^\vee}_{|\Lambda_X}
$$

	\item[ii)] If $\alpha$ and $\vartheta(\alpha)$ are orthogonal, then
$$
	\sigma^\vee = {\alpha^\vee}_{|\Lambda_X}
$$

	\item[iii)] If $\alpha - \vartheta(\alpha) \in \Phi^+$, then
$$
	\sigma^\vee = {(\alpha- \vartheta(\alpha))^\vee}_{|\Lambda_X}
$$
\end{itemize}
Notice that the previous definition only depends on $\sigma$, and not on $\alpha$. If indeed $\lambda \in \Lambda_X$, then in all cases we have
$$
	\langle \lambda, \sigma^\vee\rangle = \frac{2(\lambda, \alpha-\vartheta(\alpha))}{||\alpha-\vartheta(\alpha)||^2}
$$

For later use, we now introduce a slight modification of the restricted root system. Denote
$$
	\Delta_X^\dag=\{\sigma\in\widetilde \Delta_X \cap 2\Delta \;|\; \langle\sigma,\tau^\vee\rangle\in2\mathbb Z\ \forall \tau\in\widetilde\Delta_X\},
$$
and define
$$
	\Delta_X = (\widetilde \Delta_X \setminus \Delta_X^\dag) \cup \tfrac{1}{2}\Delta_X^\dag.
$$
Then $\Delta_X$ is the base of a (reduced) root system $\Phi_X$ (see e.g. \cite[Proposition~3.1]{KS}): indeed, the scalar product between any two distinct elements in $\Delta_X$ remains non-positive, and the pairing among roots remains integer. Notice that $\Phi_X$ has also Weyl group $W_X$. If moreover $\Delta_X$ is irreducible of rank $>1$ and distinct from $\widetilde \Delta_X$, then $\Delta_X$ and $\widetilde \Delta_X$ have necessarily dual types.

Accordingly we extend the weight lattice with the normalized restricted roots, and we define $\Xi_X$ to be the lattice generated by $\Lambda_X$ and $\Delta_X$ (namely by $\Lambda_X$ and $\tfrac12\Delta_X^\dag$).

For $\sigma \in \Delta_X$, we define the coroot $\sigma^\vee \in \Xi_X^\vee$ similarly as we did before: if $\sigma \in \Phi^+$ then $\sigma^\vee \in \Xi_X^\vee$ is the restriction of the associated coroot in $\Lambda^\vee$, if instead $\sigma=2\alpha$ with $\alpha\in\Delta$ then we define $\sigma^\vee=\tfrac12{\alpha^\vee}_{|\Xi_X}$, and finally if $\sigma = \alpha - \vartheta(\alpha)$ with $\langle \vartheta(\alpha), \alpha^\vee \rangle = 0$ then we define $\sigma^\vee = {\alpha^\vee}_{|\Xi_X}$. 

Set 
\[\Delta_X^\vee=\{\sigma^\vee\ :\ \sigma\in\Delta_X\},\]
regarded as a subset of the dual lattice $\Xi_X^\vee$.
Then the triple $\calR_X = (\Xi_X, \Delta_X, \Delta_X^\vee)$ is a based root datum, in the following (usual) sense:

A \textit{based root datum} is a triple $(\Xi,S,\alpha\mapsto\alpha^\vee)$ such that
\begin{itemize}
\item[i)] $\Xi$ is a free abelian group of finite rank, $S$ is a finite subset of $\Xi$ and $\alpha\mapsto\alpha^\vee$ is a map from $S$ into $\Xi^\vee$,
\item[ii)] for all $\alpha\in S$ the pairing $\langle\alpha,\alpha^\vee\rangle$ is equal to 2,
\item[iii)] the group $W$ of automorphisms of $\Xi$ generated by the reflections
\[
s_\alpha\colon \Xi\longrightarrow\Xi \qquad \qquad
\xi \longmapsto \xi-\langle x,\alpha^\vee\rangle\alpha\] 
with $\alpha\in S$ is finite,
\item[iv)] the $W$-orbit of $S$ is a reduced root system $R$ in the real subspace of $\Xi_\RR$ generated by $S$, and $S$ is a base of $R$.
\end{itemize} 

Notice that in our case the root datum $\calR_X$ is \emph{semisimple}, namely the base $S$ spans $\Xi_\RR$ as a vector space.

Let 
\[\Xi_X^+:=\{\lambda\in\Xi_X\ :\ \langle\lambda,\sigma^\vee\rangle\geq0\ \forall\ \sigma\in \Delta_X\},\] 
be the monoid of the dominant weights of $\calR_X$. Then we have a (possibly strict) inclusion $\Lambda_X^+ \subset \Xi_X^+$, and  
$\Xi_X^+ = \Xi_X \cap \Lambda^+$.

On the weight lattice $\Xi_X$, we will consider the partial order $\leq_X$ defined by $\widetilde \Delta_X$ as follows
$$
	\pi_1 \leq_X \pi_2 \quad\mbox{if}\quad \pi_2 - \pi_1 \in \NN \widetilde \Delta_X
$$
When $\widetilde \Delta_X = \Delta_X$ this is the usual dominance order on $\Xi_X$, otherwise it is a refinement of the dominance order where the simple roots in $\tfrac12\Delta_X^\dag$ are only taken with even coefficients.

\subsection{A conjectural rule when $\Phi_X$ is of type $\sfA$.}

When $\widetilde \Phi_X$ (or equivalently $\Phi_X$) is of type $\sfA$, then up to a normalization the Jacobi polynomials $P_\lambda^{(k)}$ coincide with the Jack symmetric polynomials $J_\lambda^{(1/k)}$ (see e.g. \cite[Proposition~3.3]{BO}). Thus in this case the restriction of the spherical function $f_\lambda$ to $A_X$ is a scalar multiple of the specialized Jack polynomial $J_\lambda^{(2/m)}$. 

As is well known, when dealing with symmetric polynomials, it is often convenient to consider symmetric functions in infinitely many variables. Here we do the same, and instead of working with the Jack polynomials we pass to the Jack symmetric functions $J_\lambda^{(k)}$, which form a basis, when $\lambda$ vary among all possible partitions, of the ring of symmetric functions in infinitely many variables with coefficients in $\mathbb Q(k)$. As a general reference on the Jack symmetric functions, see \cite{St} and \cite[Section VI.10]{Mac}.

We now recall a conjecture of Stanley concerning the multiplication of the Jack symmetric functions.

\begin{conjecture}[{\cite[Conjecture~8.3]{St}}] \label{conj:stanley}
Write $J_\lambda^{(k)} J_\mu^{(k)} = \sum_\nu \frac{g_{\lambda,\mu}^\nu(k)}{j_\nu(k)} J_\nu^{(k)}$. Then the function $g_{\lambda,\mu}^\nu(k)$ is a polynomial in $k$, with non-negative integer coefficients.
\end{conjecture}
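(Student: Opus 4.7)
The conjecture has three layers of increasing difficulty and I would treat them in turn. That $g_{\lambda,\mu}^\nu(k)$ is a rational function of $k$ is immediate from the definition of $J_\lambda^{(k)}$ over $\QQ(k)$. To show it is in fact a polynomial in $k$, I would expand the product $J_\lambda^{(k)} J_\mu^{(k)}$ in the monomial basis and then re-expand in the Jack basis using the unitriangular transition matrix; the denominators can be controlled by tracking the explicit hook-type product formula for $j_\nu(k)$. For integrality of the coefficients, I would pass throughout to the integral form of the Jack functions given by the admissible-tableaux formula of Knop and Sahi, in which all coefficients already lie in $\ZZ[k]$.

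The substantive obstacle will be \emph{non-negativity}. The expression obtained from basis change is rational and involves substantial cancellation, so positivity is entirely non-manifest, and I would try three different approaches. The most concrete is to find a combinatorial model: for each triple $(\lambda,\mu,\nu)$ construct a finite set of decorated tableaux carrying a non-negative statistic so that $g_{\lambda,\mu}^\nu(k)$ is the generating polynomial of that set with respect to the statistic. Any candidate model must specialize at $k=1$ to the Littlewood--Richardson rule, at $k=2$ to its zonal analogue, and at $k=1/2$ to the quaternionic analogue, a stringent triple test. A more structural route would be categorification: realize $g_{\lambda,\mu}^\nu(k)$ as the graded dimension of a Hom-space between modules for the rational Cherednik algebra at parameter $k$, or for the degenerate double affine Hecke algebra, so that positivity becomes automatic. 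A third route is induction from the Pieri case handled in \cite{St}, writing $J_\mu^{(k)}$ iteratively in terms of one-row factors; positivity is typically destroyed under iteration, so this is unlikely to work in general but may succeed in low-rank or hook-shape cases.

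The main obstacle I anticipate is precisely positivity: no geometric or representation-theoretic interpretation is currently available that delivers non-negativity across the full range of $k$. For the present paper, however, the full conjecture is not actually needed; the applications in later sections depend only on positivity of the specialized values $g_{\lambda,\mu}^\nu(m/2)$ for the particular multiplicity functions $m$ arising from the spherical pairs of type $\sfA$ considered, and it is plausible that such partial versions can be accessed case by case even when the general statement remains out of reach.
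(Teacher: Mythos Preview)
The statement you are addressing is a \emph{conjecture}, and the paper does not prove it. It is quoted from Stanley's paper \cite{St} as an open problem and is used throughout only as a standing hypothesis: the main results of the paper are of the form ``if Stanley's conjecture holds, then Conjecture~\ref{conj:affine_spherical_A} holds for $X$''. So there is no ``paper's own proof'' to compare your proposal against.

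What the paper does record is exactly the partial progress you allude to: that $g_{\lambda,\mu}^\nu(k)$ is a polynomial in $k$ with integer coefficients follows from the Knop--Sahi integrality result, and the non-negativity is known only in the Pieri case (one factor a single row) by Stanley's own theorem. Your sketch of these two known pieces is accurate. Your three proposed attacks on non-negativity --- a combinatorial model interpolating the Littlewood--Richardson rule and its zonal/quaternionic analogues, a categorification via Cherednik or DAHA modules, and an induction from the Pieri case --- are reasonable research directions, but none of them is a proof, and you yourself concede this when you write that ``no geometric or representation-theoretic interpretation is currently available''. That is correct: the conjecture remains open.

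Your closing remark is the right takeaway for this paper: the applications here need only the specialized values $g_{\lambda,\mu}^\nu(2/m)$ to be nonzero exactly when the corresponding Littlewood--Richardson coefficient is nonzero, and in the rank-one cases this is covered unconditionally by Stanley's Pieri rule. But to be clear, what you have written is a research outline, not a proof; the honest status of Conjecture~\ref{conj:stanley} is that it is open, and the paper treats it as such.
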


To explain the denominator $j_\nu(k)$, recall that the Jack symmetric functions are pairwise orthogonal with respect to the scalar product $\langle\ ,\ \rangle_k$ defined on the basis of the power-sum symmetric functions $p_\lambda$ ($\lambda$ being a partition) as follows:
$$
\langle p_\lambda, p_\mu \rangle_k = \left\{
\begin{array}{cc}
z_\lambda \ k^{\ell(\lambda)} & \text{if $\lambda =\mu$} \\
0 & \text{if $\lambda \neq \mu$} 
\end{array} \right.
$$
Here $\ell(\lambda)$ denotes the length of $\lambda$, and if $m_i$ is the number of parts in $\lambda$ equal to $i$ then
$$z_\lambda = (1^{m_1} 2^{m_2} \cdots) m_1! \ m_2! \cdots $$
Therefore, expanding the product of two Jack symmetric functions in terms of Jack symmetric functions, one has
\[J_\lambda^{(k)} J_\mu^{(k)} = \sum_\nu \frac{\langle J_\lambda^{(k)} J_\mu^{(k)}, J_\nu^{(k)}\rangle_k}{\langle J_\nu^{(k)},J_\nu^{(k)}\rangle_k}\ J_\nu^{(k)} .\]
Then $j_\nu(k) = \langle J_\nu^{(k)},J_\nu^{(k)}\rangle_k$, and $g_{\lambda,\mu}^\nu(k)=\langle J_\lambda^{(k)} J_\mu^{(k)}, J_\nu^{(k)}\rangle_k$. There is a well known combinatorial formula which expresses $j_\nu(k)$ as a polynomial in $k$ with non-negative integer coefficients in terms of hooks of $\nu$ (see \cite[Theorem~5.8]{St}). 

It is known that the functions $g_{\lambda,\mu}^\nu(k)$ are polynomials in $k$ with integer coefficients: this follows from a result of Knop and Sahi \cite{KS}, showing that the coefficients of the expansion of the Jack symmetric functions in terms of the monomial symmetric functions are polynomial in $k$ with non-negative integer coefficients.

Conjecture \ref{conj:stanley} is known to be true when one of the two partitions $\lambda$ and $\mu$ consists of a single row: this follows from a generalization of the Pieri rule for Jack symmetric functions proved by Stanley himself \cite[Theorem~6.1]{St}.

Let us come back to a symmetric space $X=G/K$ with restricted root system of type $\sfA$. If we normalize properly the spherical functions $f_\lambda$ and write $f_\lambda f_\mu = \sum a_{\lambda,\mu}^\nu \, f_\nu$, then $a_{\lambda,\mu}^\nu = g_{\lambda,\mu}^\nu(2/m)$. On the other hand, by the very definition of the scalar product $\langle \ ,\ \rangle_k$, specializing the Jack symmetric function $J_\lambda^{(k)}$ at $k = 1$ yields a non-zero scalar multiple of the Schur function $s_\lambda$. Thus $g_{\lambda,\mu}^\nu(1)$ is non-zero if and only if the Littlewood-Richardson coefficient $c_{\lambda,\mu}^\nu$ associated to $\lambda,\mu,\nu$ in the restricted root system associated to $G/K$ is non-zero.

For a symmetric space $X$ with restricted root system of type $\sfA$, we get then
\begin{equation}	\label{g}
E_X(\nu) \subset E_X(\lambda) \cdot E_X(\mu) \Longleftrightarrow g_{\lambda, \mu}^\nu(2/m) \neq 0 ,
\end{equation}
and Stanley's conjecture immediately implies
\begin{equation}	\label{stanley-cor}
E_X(\nu) \subset E_X(\lambda) \cdot E_X(\mu) \Longleftrightarrow c_{\lambda,\mu}^\nu \neq 0.
\end{equation}

When the rank of the restricted root system is one, only one-row partitions occur as dominant weights, therefore Equation \eqref{stanley-cor} holds true thanks to Stanley's generalization of the Pieri rule mentioned above.

To be more explicit, let $G_X$ be the connected semisimple group defined by the based root datum $\calR_X = (\Xi_X, \Delta_X, \Delta_X^\vee)$, and correspondingly let $T_X \subset G_X$ and $B_X \subset G_X$ be a maximal torus and a Borel subgroup containing $T_X$ inducing $\calR_X$. For $\pi \in \Xi_X^+$ (and in particular for $\pi \in \Lambda_X^+$) let $V_X(\pi)$ be the irreducible $G_X$-module with highest weight $\pi$. Then Stanley's Conjecture \ref{conj:stanley} yields the following (weaker) conjecture.

\begin{conjecture}	\label{conj:stanley_cor}
Let $X$ be a symmetric variety with restricted root system of type $\sfA$ and let $\lambda, \mu, \nu \in \Lambda_X^+$. Then
$$
E_X(\nu) \subset E_X(\lambda) \cdot E_X(\mu) \Longleftrightarrow
\left\{ \begin{array}{c}
V_X(\nu) \subset V_X(\lambda) \otimes V_X(\mu)  \phantom{\Big|} \\
\nu \leq_X \lambda + \mu
\end{array} \right.
$$
\end{conjecture}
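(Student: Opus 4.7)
The plan is to derive Conjecture \ref{conj:stanley_cor} as a direct consequence of Stanley's Conjecture \ref{conj:stanley}. The starting point is the equivalence (\ref{g}), obtained by normalizing the spherical functions so that $f_\lambda$ restricts to the specialized Jack symmetric function $J_\lambda^{(2/m)}$ on $A_X$: the inclusion $E_X(\nu)\subset E_X(\lambda)\cdot E_X(\mu)$ holds if and only if $g_{\lambda,\mu}^\nu(2/m)\neq 0$. (Note that $j_\nu(k)$, appearing as a denominator in the expansion of $J_\lambda J_\mu$, is a polynomial in $k$ with non-negative integer coefficients and does not vanish at any positive $k$, so it contributes no obstruction.)

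Assuming Stanley's conjecture, $g_{\lambda,\mu}^\nu(k)$ is a polynomial in $k$ with non-negative integer coefficients. Since the multiplicity function $m$ is strictly positive, the specialization $k=2/m$ is a positive real number, and a polynomial with non-negative integer coefficients vanishes at a positive argument if and only if it vanishes identically. In particular
\[ g_{\lambda,\mu}^\nu(2/m)\neq 0 \iff g_{\lambda,\mu}^\nu(1)\neq 0 . \]
At $k=1$ the Jack symmetric function $J_\lambda^{(1)}$ equals a non-zero scalar multiple of the Schur function $s_\lambda$, so $g_{\lambda,\mu}^\nu(1)$ is a non-zero scalar multiple of the Littlewood-Richardson coefficient $c_{\lambda,\mu}^\nu$ indexed by the partitions corresponding to $\lambda,\mu,\nu$. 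Combining with the previous step we obtain
\[ E_X(\nu)\subset E_X(\lambda)\cdot E_X(\mu) \iff c_{\lambda,\mu}^\nu\neq 0. \]

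It then remains to rephrase the non-vanishing of $c_{\lambda,\mu}^\nu$ in terms of the group $G_X$ attached to the based root datum $\calR_X$. Since $\Phi_X$ is of type $\sfA$, the irreducible $G_X$-modules indexed by partitions satisfy the classical Littlewood-Richardson rule: $V_X(\nu) \subset V_X(\lambda)\otimes V_X(\mu)$ is equivalent to the existence of $c_{\lambda,\mu}^\nu\neq 0$ modulo the implicit homogeneity constraint $|\lambda|+|\mu|=|\nu|$ on the underlying partitions. The product $J_\lambda^{(k)} J_\mu^{(k)}$ is homogeneous of total degree $|\lambda|+|\mu|$, and this homogeneity translates, in root-theoretic language, exactly into the condition $\nu\leq_X \lambda+\mu$, i.e.\ $\lambda+\mu-\nu\in\NN\widetilde\Delta_X$. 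Thus $c_{\lambda,\mu}^\nu\neq 0$ if and only if both sides of the right-hand conjunction in Conjecture \ref{conj:stanley_cor} hold, completing the proposed derivation.

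The main delicate point is the triple translation between (i) Jack/Schur symmetric functions indexed by partitions, (ii) spherical weights $\Lambda_X^+$ with the partial order $\leq_X$ defined by $\widetilde\Delta_X$, and (iii) representations $V_X(\pi)$ of the auxiliary group $G_X$ defined by $\calR_X$, in which the spherical roots in $\Delta_X^\dag$ are rescaled. One must verify that the identification of dominant weights with partitions is compatible with these three viewpoints, track the various normalizations (Jack vs.\ Schur, Jack vs.\ Jacobi, choice of $f_\lambda$) to ensure they contribute only non-zero scalars, and check that the rescaling $\widetilde\Delta_X \rightsquigarrow \Delta_X$ underlying the definition of $\calR_X$ is precisely what encodes the homogeneity constraint as the relation $\nu\leq_X \lambda+\mu$.
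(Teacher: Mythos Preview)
Your approach is essentially the same as the paper's: derive \eqref{stanley-cor} from Stanley's conjecture via the positivity argument (a polynomial with non-negative coefficients vanishes at a positive real number iff it vanishes identically), and then translate the Littlewood--Richardson condition into the language of $G_X$.

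The one imprecision is in your final translation step. The claim that the homogeneity constraint $|\lambda|+|\mu|=|\nu|$ ``translates exactly into the condition $\nu\leq_X\lambda+\mu$'' is not correct as stated: the dominance condition $\lambda+\mu-\nu\in\NN\widetilde\Delta_X$ is strictly stronger than homogeneity. The paper handles this differently. It observes (in the remark following the conjecture) that in every case of Table~\ref{tab:sym_pairs_A} except $\SL(2)/\SO(2)$ one has $\Delta_X=\widetilde\Delta_X$, so that $c_{\lambda,\mu}^\nu\neq 0$ is directly equivalent to $V_X(\nu)\subset V_X(\lambda)\otimes V_X(\mu)$ and the condition $\nu\leq_X\lambda+\mu$ is redundant. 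In the remaining case $\SL(2)/\SO(2)$, where $\Delta_X=\{\alpha_1\}$ but $\widetilde\Delta_X=\{2\alpha_1\}$, the group $G_X$ is $\PGL(2)$ rather than $\SL(2)$, and one must invoke the (trivial) saturation property of $\SL(2)$ tensor products to pass between the two. Your acknowledgement that this translation is ``the main delicate point'' is well placed, but the actual mechanism is a case distinction rather than a uniform identification of homogeneity with $\leq_X$.
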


When the restricted root system of $X$ is of type $\sfA_1$,  the previous conjecture holds true by Stanley's Pieri rule for Jack symmetric functions.

Notice that Conjecture \ref{conj:stanley_cor} is easily shown when $G = H \times H$ and $K = \diag(H)$ for some reductive algebraic group $H$ (even in types other than $\sfA$), in which case $X \simeq H$ regarded as $H \times H$-variety (see e.g. \cite[Lemma 3.4]{DC}). 

\begin{remark}
Notice that in all cases of Table \ref{tab:sym_pairs_A} but case Sym.A1 with $n=2$, it always holds the equality $\Delta_X = \widetilde \Delta_X$. In particular in all these cases Conjecture \ref{conj:stanley_cor} follows immediately from \eqref{stanley-cor}, and the condition $\nu\leq_X \lambda+\mu$ is redundant. On the other hand, to deduce Conjecture \ref{conj:stanley_cor} from \eqref{stanley-cor} in the case $X = \SL(2)/T$ we need to use the saturation property of the tensor product of $\SL(2)$.
\end{remark}

\begin{remark}		\label{oss:root_data}
Rather than $\mathcal R_X$, in Conjecture \ref{conj:stanley_cor} it would have been more natural to use the based root datum $\widetilde \calR_X = (\Lambda_X, \widetilde \Delta_X, \widetilde \Delta_X^\vee)$, and define the semisimple group $G_X$ accordingly. Indeed this formulation follows immediately from \eqref{stanley-cor}, and is also simpler because the inequality $\nu\leq_X \lambda+\mu$ becomes redundant. However computational experiments suggest that the formulation that we adopted is closer to possible generalizations (see Section \ref{sec:Levi}, where we consider the case of the symmetric subgroups of Hermitian type).
\end{remark}

In a slightly different form, Conjecture \ref{conj:stanley_cor} was also considered by Graham and Hunziker \cite{GH}. 

For completeness, we list in Table \ref{tab:sym_pairs_A} all the symmetric varieties with a restricted root system of type $\sfA$. In all cases, we also give the simple restricted roots and a minimal set of generators for the weight monoid $\Lambda^+_X$.  

\begin{table}[H]
\caption{Symmetric pairs with restricted root system of type $\sfA_r$} \label{tab:sym_pairs_A}
\begin{tabular}{|c|c|c|c|c|}
\hline 
& $\phantom{\Big|}$ $G$ & $K$ & $\Phi_{G/K}$ & $m_\sigma$ \\
    \hline    \hline
Sym.A1 & $\phantom{\Big|}$ $\SL(n)$, $n\geq2$ & $\SO(n)$ & $\sfA_{n-1}$ & 1 \\
    \hline
Sym.A2 & $\phantom{\Big|}$ $\SL(n) \times \SL(n)$, $n\geq2$ & $\diag(\SL(n))$ & $\sfA_{n-1}$ & 2 \\
    \hline
Sym.A3 & $\phantom{\Big|}$ $\SL(2n)$, $n\geq2$ & $\Sp(2n)$ & $\sfA_{n-1}$ & 4 \\
    \hline 
Sym.A4 & $\phantom{\Big|}$ $\Spin(n)$, $n\geq5$ & $\Spin(n-1)$ & $\sfA_1$ & $n-2$ \\
    \hline
Sym.A5 & $\phantom{\Big|}$ $\sfE_6$ & $\sfF_4$ & $\sfA_2$ & 8 \\
    \hline 
\end{tabular}
\end{table}

\begin{itemize}
	\item[i)] $\mathbf{SL(2)/SO(2)}$: $\Delta_X = \{\alpha_1\}$, $\Lambda^+_X$ is generated by $2 \omega_1$;
	\item[ii)] $\mathbf{SL(n)/SO(n)}$, $n\geq3$: $\Delta_X = \{2\alpha_i\ : \ 1\leq i\leq n-1\}$, $\Lambda^+_X$ is generated by the weights $2 \omega_i$ with $ 1\leq i\leq n-1$;
	\item[iii)] $\big(\mathbf{SL(n) \times SL(n)\big)/diag(SL(n))}$, $n \geq 2$: $\Delta_X = \{\alpha_i + \alpha_i'\ :\ 1\leq i\leq n-1\}$, $\Lambda^+_X$ is generated by the weights $\omega_i + \omega_i'$ with $ 1\leq i\leq n-1$;
	\item[iv)] $\mathbf{SL(2n)/Sp(2n)}$, $n \geq 2$: $\Delta_X = \{\alpha_{2i-1} + 2\alpha_{2i}+\alpha_{2i+1}\ :\ 1\leq i\leq n-1\}$, $\Lambda^+_X$ is generated by the weights $\omega_{2i}$ with $ 1\leq i\leq n-1$;
	\item[v)] $\mathbf{Spin(2n+1)/Spin(2n)}$, $n \geq 2$: $\Delta_X = \{2(\sum_{i=1}^n\alpha_i)\}$, $\Lambda^+_X$ is generated by $\omega_1$;
	\item[vi)] $\mathbf{Spin(2n)/Spin(2n-1)}$, $n \geq 3$: $\Delta_X = \{2(\sum_{i=1}^{n-2}\alpha_i) + \alpha_{n-1} + \alpha_n\}$, $\Lambda^+_X$ is generated by $\omega_1$;
	\item[vii)] $\mathbf{E_6/F_4}$: $\Delta_X = \{2\alpha_1 + \alpha_2 + 2\alpha_3 + 2\alpha_4 + \alpha_5, \; \alpha_2 + \alpha_3 + 2\alpha_4 + 2\alpha_5 + 2\alpha_6\}$, $\Lambda^+_X$ is generated by $\omega_1$ and $\omega_6$.
\end{itemize}

%

\section{Root systems associated to a spherical homogeneous space}\label{sec:sph}

We now recall how to attach a based root system (or more precisely, three based root systems) to a spherical homogeneous space $G/K$, which for the purposes of the present paper we can assume affine. General references for these constructions are \cite{Br} and \cite{Kn}.

Let $\Lambda_{G/K}$ be the weight lattice of $G/K$, defined as
$$
	\Lambda_{G/K} = \{\chi \in \calX(T) \; | \; \CC(G/K)^{(B)}_\chi \neq 0\}.
$$ 
Since $K$ is reductive, notice that $\Lambda_{G/K}$ is generated as a lattice by $\Lambda^+_{G/K}$. In all the cases that we will consider, we will actually have $\Lambda^+_{G/K} = \Lambda_{G/K} \cap \Lambda^+$. 



Looking at the multiplication of spherical modules, we can associate three root systems to $G/K$. For a fixed $G/K$, the three root systems will have the same Weyl group $W_{G/K}$: what changes is only the root normalization.\\

$\bullet$ \textbf{The $\rmn$-spherical roots of $G/K$.}
The first root system, that we denote by $\Phi^\rmn_{G/K}$, is defined by considering the multiplication of spherical modules inside the invariant space $\CC[G]^K$. If $\lambda, \mu, \nu \in \Lambda^+_{G/K}$ is a triple of spherical weights such that $E(\nu) \subset E(\lambda) \cdot E(\mu)$, then $\lambda+\mu-\nu \in \Lambda_{G/K} \cap \NN\Delta$. Denote by $\calM^\rmn_{G/K}$ the monoid generated by all possible differences $\lambda+\mu-\nu \in \Lambda_{G/K}$ with $\lambda, \mu, \nu \in \Lambda^+_{G/K}$ as above. This is a free monoid (see \cite[Theorem 1.3]{Kn} and \cite[Theorem 4.11]{ACF}). Its (unique) set of free generators, denoted by $\Delta^\rmn_{G/K}$, is the set of the \textit{$\rmn$-spherical roots of $G/K$}. It is the base of a reduced root system, $\Phi^\rmn_{G/K}$ (see \cite[Theorem 1.3]{Kn}).\\

$\bullet$ \textbf{The $\rmsc$-spherical roots of $G/K$.} The second root system, that we denote by $\Phi^\rmsc_{G/K}$, is defined by considering the multiplication of quasi-spherical modules inside the semi-invariant space $\CC[G]^{(K)}$. If $\lambda, \mu, \nu \in \Omega^+_{G/K}$ is a triple of quasi-spherical weights such that $E_{\chi+\chi'}(\nu) \subset E_\chi(\lambda) \cdot E_{\chi'}(\mu)$, then $\lambda+\mu-\nu \in \Lambda_{G/K} \cap \NN \Delta$. Let $\calM^\rmsc_{G/K}$ be the monoid generated by all possible differences $\lambda+\mu-\nu$ with $\lambda, \mu, \nu \in \Omega^+_{G/K}$ as above. Again, this is a free monoid (see \cite[Corollary 7.6]{Kn} and \cite[Proposition 5]{BGM}). Its (unique) set of free generators, denoted by $\Delta^\rmsc_{G/K}$, is the set of the \textit{$\rmsc$-spherical roots of $G/K$}. It is the base of a reduced root system, $\Phi^\rmsc_{G/K}$.\\

$\bullet$ \textbf{The minimal spherical roots of $G/K$.}
By definition we have inclusions $\calM^\rmn_{G/K} \subset \calM^\rmsc_{G/K} \subset \Lambda_{G/K}$.
The two root monoids $\calM^\rmn_{G/K}$ and $\calM^\rmsc_{G/K}$ actually generate the same cone in $\Lambda_{G/K} \otimes_\ZZ \QQ$. However neither the $\rmn$-spherical roots nor the $\rmsc$-spherical roots need to be primitive elements inside $\Lambda_{G/K}$: the set of the \textit{minimal spherical roots of $G/K$}, denoted by $\Delta^{\min}_{G/K}$, is the set of the primitive elements in $\Lambda_{G/K}$ associated to the extremal rays of the cone generated by $\calM^\rmn_{G/K}$ in $\Lambda_{G/K} \otimes_\ZZ \QQ$. The minimal spherical roots of $G/K$ form the base of a reduced root system, that we denote by $\Phi^{\min}_{G/K}$.\\

It is clear from their construction that the three root systems $\Phi^{\min}_{G/K}$, $\Phi^\rmsc_{G/K}$, $\Phi^\rmn_{G/K}$ all have the same Weyl group $W_{G/K}$.

If $G/K$ is a symmetric variety, we have $\widetilde \Delta_{G/K} = \Delta^\rmn_{G/K}$ (see \cite[Theorem~6.7]{Kn}).

Both the $\rmn$-spherical roots and the $\rmsc$-spherical roots of $G/K$ arise as minimal spherical roots of a quotient of $G/K$. Denote indeed by $\bar K$ the spherical closure of $K$, defined as the kernel of the action of $\rmN_G(K)$ on $\calX(K)$. Then both $\rmN_G(K)$ and $\bar K$ are spherical subgroups of $G$, which satisfy
$$
	\Delta^\rmn_{G/K} = \Delta^{\min}_{G/\rmN_G(K)}
	\qquad \qquad 
	\Delta^\rmsc_{G/K} = \Delta^{\min}_{G/\overline K}.
$$
This explains our terminology. Moreover, we have equalities
$$
\Lambda_{G/\rmN_G(K)} = \ZZ \Delta^\rmn_{G/K}
\qquad \qquad 
\Lambda_{G/\bar K} = \ZZ \Delta^\rmsc_{G/K}.
$$
The inclusion $\ZZ \Delta^{\min}_{G/K} \subset \Lambda_{G/K}$ is strict in general: it is an equality precisely when $G/K$ admits a wonderful completion (which always happens for $G/\bar K$ and $G/\rmN_G(K)$). Notice that in general the minimal spherical roots of $G/K$ do not necessarily belong to the root lattice, differently from the $\rmn$-spherical roots and from the $\rmsc$-spherical roots which by construction are always in the root lattice of $G$.

\begin{remark}	\label{oss:norm-reticoli}
The weight lattices of $G/K$, $G/\bar K$ and $G/\rmN_G(K)$ allow easily to compare $K$, $\bar K$ and $\rmN_G(K)$: indeed the quotient $\rmN_G(K)/ K$ is diagonalizable (see \cite[Corollaire 5.2]{BP}) and we have inclusions
$$
	\Lambda_{G/\rmN_G(K)} \subset \Lambda_{G/\bar K} \subset \Lambda_{G/K},
$$
inducing isomorphisms (see e.g.\ \cite[Lemma 2.4]{Ga})
\begin{gather*}
	\calX(\rmN_G(K)/K) \simeq \Lambda_{G/K}/\Lambda_{G/\rmN_G(K)},\\
	\calX\big(\rmN_G(K)/\bar K \big) \simeq \Lambda_{G/\bar K}/\Lambda_{G/\rmN_G(K)},
\qquad \qquad 	\calX\big(\bar K/K \big) \simeq \Lambda_{G/K}/\Lambda_{G/\bar K}.
\end{gather*}
\end{remark}


As already anticipated in the case of a symmetric variety, we define a fourth normalization for the spherical roots, setting
$$
	\Delta_{G/K} = (\Delta^\rmn_{G/K} \setminus \Delta_{G/K}^\dag) \cup \tfrac{1}{2}\Delta_{G/K}^\dag
$$
where
$$
	\Delta_{G/K}^\dag=\{\sigma\in\Delta^\rmn_{G/K} \cap 2\Delta \;|\; \langle\sigma,\tau^\vee\rangle\in2\mathbb Z\ \; \forall \tau\in\Delta^\rmn_{G/K}\}.
$$

Since $\Delta^\rmn_{G/K}$ is obtained from $\Delta^\rmsc_{G/K}$ by doubling some particular elements in $\Delta^\rmsc_{G/K} \cap \Delta$ (see \cite[Section~2.4]{BL}), it turns out that we also have
$$
	\Delta_{G/K} = (\Delta^\rmsc_{G/K} \setminus \Delta_{G/K}^\dag) \cup \tfrac{1}{2}\Delta_{G/K}^\dag .
$$
We denote by $\Phi_{G/K}$ the (reduced) root system generated by $\Delta_{G/K}$. Notice that $\Phi_{G/K}$ has also Weyl group $W_{G/K}$. In all cases that we will be interested (see Table~\ref{tab:sph_pairs_A}) we have indeed $\Delta_{G/K} =  \Delta^\rmsc_{G/K}$: this is always true when $K$ is connected.

If $\sigma \in \Delta^\rmn_{G/K}$, then the following three possibilities occur (see \cite[Th\'eor\`eme~2.6]{Br}): either $\sigma \in \Phi^+$ is a positive root, or $\sigma \in 2 \Delta$ is the double of a simple root, or $\sigma = \alpha + \beta$ is the sum of two strongly orthogonal positive roots (that is, neither $\alpha + \beta$ nor $\alpha-\beta$ is in $\Phi$). Since $\Delta_{G/K}$ and $\Delta^\rmsc_{G/K}$ are obtained from $\Delta^\rmn_{G/K}$ by replacing some elements in $2\Delta \cap \Delta^\rmn_{G/K}$ with the corresponding simple roots, the same description applies to $\Delta^\rmsc_{G/K}$ and $\Delta_{G/K}$.


There is actually a finite list of possible cases: the shape of a $\rmn$-spherical root, written as sum of simple roots, is necessarily one of those reported in Table~\ref{tab:sc_sph_roots} (where we also specify the type of the root subsystem of $\Phi$ generated by the support of the spherical root).

\begin{table}[H]
\caption{$\rmsc$-spherical roots} \label{tab:sc_sph_roots}
\begin{tabular}{lll}
$\sigma$ & & type of $\sigma$ \\
\hline
$\alpha$ & & $\sfA_1$ \\
$2\alpha$ & & $\sfA_1$ \\
$\alpha+\alpha'$ & & $\sfA_1\times\sfA_1$ \\
$\alpha_1+\ldots+\alpha_m$ & & $\sfA_m$, $m\geq2$ \\
$\alpha_1+2\alpha_2+\alpha_3$ & & $\sfA_3$ \\
$\alpha_1+\ldots+\alpha_m$ & & $\sfB_m$, $m\geq2$ \\
$2(\alpha_1+\ldots+\alpha_m)$ & & $\sfB_m$, $m\geq2$ \\
$\alpha_1+2\alpha_2+3\alpha_3$ & & $\sfB_3$ \\
$\alpha_1+2(\alpha_2+\ldots+\alpha_{m-1})+\alpha_m$ & & $\sfC_m$, $m\geq3$ \\
$2(\alpha_1+\ldots+\alpha_{m-2})+\alpha_{m-1}+\alpha_m$ & & $\sfD_m$, $m\geq4$ \\
$\alpha_1+2\alpha_2+3\alpha_3+2\alpha_4$ & & $\sfF_4$ \\
$\alpha_1+\alpha_2$ & & $\sfG_2$ \\
$4\alpha_1+2\alpha_2$ & & $\sfG_2$
\end{tabular}
\end{table}

In general $\Delta_{G/K}$ is not contained in $\Lambda_{G/K}$. Thus we extend accordingly the weight lattice, and define $\Xi_{G/K}$ to be the lattice generated by $\Lambda_{G/K}$ together with $\Delta_{G/K}$, namely by $\Lambda_{G/K}$ and $\tfrac{1}{2}\Delta_{G/K}^\dag$.

The following lemma will allow us to define the spherical coroots in $\Xi_{G/K}^\vee$, in analogy to what we did in the symmetric case.

\begin{lemma}	\label{lemma:coradici sferiche}
Let $G/K$ be a spherical variety and let $\sigma \in \Delta^\rmsc_{G/K} \setminus \Phi^+$.

\begin{itemize}
\item[i)] 	If $\sigma = 2\alpha$ with $\alpha \in \Delta$, then $\langle \lambda, \alpha^\vee \rangle \in 2 \ZZ$ for all $\lambda \in \Xi_{G/K}$.

\item[ii)] If $\sigma$ decomposes as a sum of two strongly orthogonal roots $\beta, \gamma \in \Phi^+$, then 
$$
	{\beta^\vee}_{|\Xi_{G/K}} = 	{\gamma^\vee}_{|\Xi_{G/K}} .
$$
Moreover, any such decomposition of $\sigma$ yields the same result.
\end{itemize}
\end{lemma}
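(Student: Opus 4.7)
The plan is to exploit the splitting $\Xi_{G/K}=\Lambda_{G/K}+\tfrac12\ZZ\Delta_{G/K}^\dag$ and to verify each identity separately on $\Lambda_{G/K}$ and on each generator $\tfrac12\tau$ with $\tau\in\Delta_{G/K}^\dag$. In both parts, the step handling the ``half'' contribution will be driven by the defining condition of $\Delta_{G/K}^\dag$, together with the observation that any $\sigma\in\Delta^\rmsc_{G/K}\setminus\Phi^+$ automatically belongs to $\Delta^\rmn_{G/K}$: indeed, the doubling relating $\Delta^\rmsc_{G/K}$ to $\Delta^\rmn_{G/K}$ affects only elements of $\Delta^\rmsc_{G/K}\cap\Delta$, whereas our $\sigma$ has shape $2\alpha$ or $\beta+\gamma$.

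For (i), the claim on $\Lambda_{G/K}$ is precisely the standard integrality condition for the spherical coroot attached to $\sigma=2\alpha$, namely that $\tfrac12\alpha^\vee|_{\Lambda_{G/K}}$ is integer-valued, equivalently $\langle\lambda,\alpha^\vee\rangle\in 2\ZZ$ for every $\lambda\in\Lambda_{G/K}$; this is part of the root-datum structure of $(\Delta^\rmsc_{G/K},\Lambda_{G/K})$. Writing $\sigma^\vee=\tfrac12\alpha^\vee|_{\Lambda_{G/K}}$, the defining condition of $\Delta_{G/K}^\dag$ applied to our $\sigma\in\Delta^\rmn_{G/K}$ then yields $\langle\tau,\sigma^\vee\rangle\in 2\ZZ$, hence $\langle\tau,\alpha^\vee\rangle\in 4\ZZ$, and so $\langle\tfrac12\tau,\alpha^\vee\rangle\in 2\ZZ$, concluding (i).

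For (ii), the main identity $\beta^\vee|_{\Lambda_{G/K}}=\gamma^\vee|_{\Lambda_{G/K}}$ amounts to the well-definedness of the spherical coroot of $\sigma=\beta+\gamma$: since $\beta$ and $\gamma$ are strongly orthogonal, the product $s_\beta s_\gamma$ is a well-defined commuting product in $W$, acting on $\calX(T)$ by $\lambda\mapsto\lambda-\langle\lambda,\beta^\vee\rangle\beta-\langle\lambda,\gamma^\vee\rangle\gamma$, and its restriction to $\Lambda_{G/K}$ must coincide with the reflection $s_\sigma$ of the little Weyl group $W_{G/K}$, which takes the form $\lambda\mapsto\lambda-\langle\lambda,\sigma^\vee\rangle(\beta+\gamma)$. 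Comparing the two, via the linear independence of $\beta$ and $\gamma$, forces $\langle\lambda,\beta^\vee\rangle=\langle\lambda,\gamma^\vee\rangle=\langle\lambda,\sigma^\vee\rangle$ on $\Lambda_{G/K}$. The extension to $\tfrac12\tau$ is then immediate, since $\tau\in\Delta_{G/K}^\dag\subseteq\Delta^\rmn_{G/K}\subseteq\Lambda_{G/K}$ entails $\langle\tau,\beta^\vee-\gamma^\vee\rangle=0$, whence $\langle\tfrac12\tau,\beta^\vee-\gamma^\vee\rangle=0$. The ``moreover'' assertion follows at once because the common restriction $\beta^\vee|_{\Xi_{G/K}}=\gamma^\vee|_{\Xi_{G/K}}=\sigma^\vee$ is intrinsically attached to $\sigma$ and is independent of the chosen strongly orthogonal decomposition.

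The only substantial input is that the standard spherical coroots descend correctly to $\Lambda_{G/K}$ -- in (i), that $\tfrac12\alpha^\vee$ is integer-valued; in (ii), that $s_\beta s_\gamma|_{\Lambda_{G/K}}$ matches the abstract reflection $s_\sigma\in W_{G/K}$. Both are standard consequences of the axiomatic theory of spherical systems, and can alternatively be verified case by case from the list of possible shapes in Table~\ref{tab:sc_sph_roots}. Once these are accepted, the passage from $\Lambda_{G/K}$ to the slightly enlarged lattice $\Xi_{G/K}$ is a purely formal manipulation driven by the definition of $\Delta_{G/K}^\dag$.
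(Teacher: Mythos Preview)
Your treatment of (i) matches the paper's: the parity on $\Lambda_{G/K}$ is Luna's axiom, and the extension to $\tfrac12\Delta^\dag_{G/K}$ via the defining condition of $\Delta^\dag_{G/K}$ is the same step the paper takes.

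In (ii), however, there is a genuine gap. Your key claim is that for an \emph{arbitrary} strongly orthogonal decomposition $\sigma=\beta+\gamma$, the product $s_\beta s_\gamma$ preserves $\Lambda_{G/K}$ and restricts there to the abstract reflection $s_\sigma\in W_{G/K}$. This is not a standard consequence of the spherical axiomatics; indeed, once one compares $s_\beta s_\gamma(\lambda)=\lambda-\langle\lambda,\beta^\vee\rangle\beta-\langle\lambda,\gamma^\vee\rangle\gamma$ with $s_\sigma(\lambda)=\lambda-\langle\lambda,\sigma^\vee\rangle(\beta+\gamma)$, the linear independence of $\beta,\gamma$ shows the claim is \emph{equivalent} to $\langle\lambda,\beta^\vee\rangle=\langle\lambda,\gamma^\vee\rangle=\langle\lambda,\sigma^\vee\rangle$ on $\Lambda_{G/K}$, which is precisely the assertion to be proved. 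What is known (and what the paper cites from \cite{KSc}) is the statement for one \emph{particular} decomposition $\sigma=\beta_0+\gamma_0$; but for distinct decompositions the elements $s_\beta s_\gamma\in W$ are in general different (e.g.\ for the $\sfD_m$-type spherical root $\sigma=2e_1$ one has $\beta=e_1+e_j$, $\gamma=e_1-e_j$ for any $j>1$, and the resulting $s_\beta s_\gamma$ negates both $e_1$ and $e_j$, so varies with $j$), hence there is no free passage from one decomposition to another.

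The paper avoids this circularity by a direct case-by-case argument: for connected $\supp(\sigma)$ there is a unique simple root $\alpha\in\supp(\sigma)$ not orthogonal to $\sigma$, and since the remaining simple roots of $\supp(\sigma)$ lie in $\Delta^p_{G/K}$ they pair trivially with every spherical weight; one then checks by hand that $\langle\omega_\alpha,\beta^\vee\rangle=\langle\omega_\alpha,\gamma^\vee\rangle=1$ for every strongly orthogonal decomposition, while the $\sfA_1\times\sfA_1$ case is handled separately via Luna's axioms. You allude to the case-by-case route at the end, but the Weyl-group argument as written does not stand on its own, and the ``moreover'' clause (independence of the decomposition) then also remains unjustified.
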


\begin{proof}
i) It is well known that $\langle \lambda, \alpha \rangle \in 2 \ZZ$ whenever $2 \alpha \in \Delta^\rmsc_{G/K}$ (see \cite[Section 1.4]{Lu1}). It is indeed being part of Luna's axioms which classify spherical varieties \cite[Section 1.2]{BL}. Thus by the definition of $\Delta^\dag_{G/K}$ this property holds for all $\lambda \in \Xi_{G/K}$.

ii) We point out that in \cite[Lemmas 6.2 and 6.4]{KSc} a particular decomposition of $\sigma$ as a sum of two strongly orthogonal positive roots is considered, and the statement is proved for that decomposition.

Since $\Lambda^+_{G/K} = \Lambda^+_{G/\bar K}$ and $\Delta^{\rmsc}_{G/K} = \Delta^{\rmsc}_{G/\bar K}$, we may assume that $K =\bar K$. 

Scrolling through the list of Table~\ref{tab:sc_sph_roots}, we see that the $\rmsc$-spherical roots that are neither positive roots nor the double of a simple root are those of the following table (we also report an extra datum, which is defined here below).

\begin{table}[H]
\begin{tabular}{clc}
$\sigma$ & type of $\sigma$ & $\mathrm{supp}(\sigma)\setminus\Delta^{pp}(\sigma)$ \\
\hline
$\alpha+\alpha'$ & $\sfA_1\times\sfA_1$ & $\{\alpha,\alpha'\}$ \\
$\alpha_1+2\alpha_2+\alpha_3$ & $\sfA_3$ & $\{\alpha_2\}$ \\
$2(\alpha_1+\ldots+\alpha_m)$ & $\sfB_m$, $m\geq2$ & $\{\alpha_1\}$ \\
$\alpha_1+2\alpha_2+3\alpha_3$ & $\sfB_3$ & $\{\alpha_3\}$ \\
$2(\alpha_1+\ldots+\alpha_{m-2})+\alpha_{m-1}+\alpha_m$ & $\sfD_m$, $m\geq4$ & $\{\alpha_1\}$ \\
$4\alpha_1+2\alpha_2$ & $\sfG_2$ & $\{\alpha_1\}$\\
\end{tabular}
\end{table}

The first case, with $\sigma$ of type $\sfA_1\times\sfA_1$, is somewhat special and the statement is well known (see \cite[Proposition 3.2]{Lu1}). It is indeed part of Luna's axioms to classify spherical varieties. Therefore we will assume that $\supp(\sigma)$ is connected.

Denote
$$\Delta_{G/K}^p=\{\alpha\in \Delta\ :\ \langle\lambda,\alpha^\vee \rangle=0\ \forall\ \lambda\in\Omega^+_{G/K}\}$$
and let $\Delta^{pp}(\sigma) \subset \supp(\sigma)$ be the set of simple roots orthogonal to $\sigma$. Then we have $\Delta^{pp}(\sigma)\subset\Delta^p_{G/K}$ whenever $\sigma\not \in \Phi^+$ (see \cite[Section~1.1]{BL}).

Notice that in all cases of the previous table with $\supp(\sigma)$ connected there exists a unique simple root $\alpha \in \supp(\sigma)$ not orthogonal to $\sigma$. Therefore it is enough to check that, for any spherical root $\sigma \in \Delta^\rmsc_{G/K} \setminus (\Phi^+ \cup 2\Delta)$ and for any decomposition $\sigma=\beta+\gamma$ as a sum of two strongly orthogonal positive roots, it holds  
$$
	\langle\omega_\alpha,\beta^\vee\rangle=\langle\omega_\alpha,\gamma^\vee\rangle = 1
$$
which can easily be checked case-by-case. 
\end{proof}

For $\sigma\in\Delta_{G/K}$, using Lemma \ref{lemma:coradici sferiche} we define the coroot $\sigma^\vee\in \Xi_{G/K}^\vee$ as follows:
\begin{itemize}
	\item[i)] if $\sigma= 2\alpha$ with $\alpha\in\Delta$, then 
$$
	\sigma^\vee = \tfrac{1}{2}{\alpha^\vee}_{|\Xi_{G/K}}
$$

	\item[ii)] if $\sigma=\beta+\gamma$ with $\beta,\gamma\in\Phi^+$ strongly orthogonal, then 
$$
	\sigma^\vee = {\beta^\vee}_{|\Xi_{G/K}}
$$

	\item[iii)] if $\sigma=\beta$ with $\beta\in\Phi^+$, then
$$
	\sigma^\vee = {\beta^\vee}_{|\Xi_{G/K}}
$$
\end{itemize}

Set $\Delta_{G/K}^\vee = \{\sigma^\vee \; | \; \sigma \in \Delta_{G/K}\}$, then by construction we have the following.

\begin{proposition} \label{prop:root_datum}
The triple $\calR_{G/K} = (\Xi_{G/K}, \Delta_{G/K}, \Delta_{G/K}^\vee)$ is a based root datum. Moreover, the intersection $\Xi_{G/K} \cap \Lambda^+$ is contained in the monoid of dominant weights of $\calR_{G/K}$
\[\Xi_{G/K}^+ :=\{\lambda\in\Xi_{G/K}\ :\ \langle\lambda,\sigma^\vee\rangle\geq0\ \forall\ \sigma\in \Delta_{G/K}\}. \] 
\end{proposition}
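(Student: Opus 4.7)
The plan is to verify conditions (i)--(iv) in the definition of based root datum directly from the case-by-case definition of $\sigma^\vee$, relying on Lemma~\ref{lemma:coradici sferiche} to handle the case when $\sigma$ is not a positive root, and then to check the inclusion $\Xi_{G/K} \cap \Lambda^+ \subset \Xi_{G/K}^+$ by observing that each $\sigma^\vee$ is (a nonnegative rational multiple of) the restriction of a positive coroot of $G$.

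I would first observe that $\Xi_{G/K}$, which is generated by $\Lambda_{G/K}$ together with $\tfrac12 \Delta_{G/K}^\dag$, is actually a sublattice of $\calX(T)$: indeed $\tfrac{1}{2}\Delta_{G/K}^\dag \subset \Delta$ by construction. Hence for any $\beta \in \Phi$, the coroot $\beta^\vee$ restricts to a $\ZZ$-valued functional on $\Xi_{G/K}$. This shows that in cases (ii) and (iii) of the definition, $\sigma^\vee$ belongs to $\Xi_{G/K}^\vee$, while in case (i), i.e.\ $\sigma = 2\alpha$ with $\alpha \in \Delta$ and $2\alpha \in \Delta_{G/K}^\dag$, the condition $\tfrac12 \alpha^\vee|_{\Xi_{G/K}} \in \Xi_{G/K}^\vee$ is exactly the content of Lemma~\ref{lemma:coradici sferiche}(i). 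The consistency of case (ii) (independence of the decomposition $\sigma = \beta+\gamma$) is the content of Lemma~\ref{lemma:coradici sferiche}(ii). A quick case-by-case computation then gives $\langle \sigma, \sigma^\vee\rangle = 2$: trivially in cases (i) and (iii), and in case (ii) by strong orthogonality (which in particular implies $\langle \gamma, \beta^\vee\rangle = 0$).

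Next, to obtain conditions (iii)--(iv) I would show that the reflection $s_\sigma \colon \xi \mapsto \xi - \langle \xi,\sigma^\vee\rangle \sigma$ preserves $\Xi_{G/K}$ for each $\sigma \in \Delta_{G/K}$. This is immediate from the two facts just established: $\langle \xi,\sigma^\vee\rangle \in \ZZ$ for $\xi \in \Xi_{G/K}$, and $\sigma \in \Xi_{G/K}$ (the latter holds because either $\sigma \in \Delta_{G/K}^\rmn \subset \Lambda_{G/K}$ or $\sigma \in \tfrac12 \Delta_{G/K}^\dag$, both of which lie in $\Xi_{G/K}$ by construction). The group $W$ generated by these reflections is then a subgroup of the finite Weyl group $W_{G/K}$ attached to the spherical root system $\Phi^\rmn_{G/K}$; in fact it coincides with $W_{G/K}$ since the latter is generated by the reflections $s_\sigma$ for $\sigma \in \Delta_{G/K}^\rmn$, which induce the same reflections on $\Xi_{G/K} \otimes \RR$ as the normalized roots in $\Delta_{G/K}$. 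The assertion that the $W$-orbit of $\Delta_{G/K}$ is a reduced root system with base $\Delta_{G/K}$ is exactly the content of the sentence in the text preceding the statement, where $\Phi_{G/K}$ is defined.

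Finally, for the inclusion $\Xi_{G/K} \cap \Lambda^+ \subset \Xi_{G/K}^+$, take $\lambda \in \Xi_{G/K} \cap \Lambda^+$ and $\sigma \in \Delta_{G/K}$. In each of the three cases in the definition of $\sigma^\vee$, the pairing $\langle \lambda, \sigma^\vee\rangle$ is a nonnegative rational multiple of $\langle \lambda, \beta^\vee\rangle$ for some positive root $\beta \in \Phi^+$; since $\beta^\vee$ is a nonnegative integer combination of simple coroots of $G$ and $\lambda \in \Lambda^+$, this pairing is nonnegative. The only step that could be delicate is keeping track of which normalization is used for $\sigma^\vee$, but since all the possibilities reduce to positive-coroot restrictions (possibly scaled by $\tfrac12$), dominance in $\calR_{G/K}$ follows from dominance in the ambient root datum of $G$. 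The main conceptual work, namely verifying the case-by-case identities for coroots of non-positive-root spherical roots, is already carried out in Lemma~\ref{lemma:coradici sferiche}, so I do not anticipate a genuine obstacle here.
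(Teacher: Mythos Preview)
Your proposal is correct and is exactly what the paper has in mind: the paper gives no proof at all, simply writing ``by construction we have the following'' immediately before the statement, so you are filling in the details that the authors consider routine consequences of the definitions together with Lemma~\ref{lemma:coradici sferiche}.

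One small slip: in your discussion of case (i) you write ``$\sigma = 2\alpha$ with $\alpha \in \Delta$ and $2\alpha \in \Delta_{G/K}^\dag$''. This is backwards: if $2\alpha \in \Delta_{G/K}^\dag$ then by definition $\sigma = \alpha$ (not $2\alpha$) lies in $\Delta_{G/K}$, and it falls under case (iii). Case (i) applies precisely when $2\alpha \in \Delta^\rmn_{G/K} \setminus \Delta_{G/K}^\dag$. This does not affect the argument, since Lemma~\ref{lemma:coradici sferiche}(i) only requires $2\alpha \in \Delta^\rmsc_{G/K}$, which holds in either situation.
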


Differently from the symmetric case, notice that the based root datum $\calR_{G/K}$ is not necessarily semisimple: by Remark \ref{oss:norm-reticoli}, this happens if and only if $K$ has finite index in its normalizer. When $\calR_{G/K}$ is semisimple every spherical weight is uniquely determined by its values against the spherical roots, otherwise the extra information is obtained by restricting to the normalizer. 

\begin{lemma}
Every $\lambda \in \Lambda_{G/K}$ is uniquely determined by its values against the spherical roots, together with its restriction to $B \cap \rmN_G(K)^\circ$.
\end{lemma}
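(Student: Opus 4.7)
The plan is to prove injectivity of the map
\[ \Lambda_{G/K} \longrightarrow \ZZ^{|\Delta_{G/K}|} \oplus \calX(B \cap \rmN_G(K)^\circ), \qquad \lambda \mapsto \bigl((\langle \lambda, \sigma^\vee\rangle)_\sigma,\ \lambda|_{B \cap \rmN_G(K)^\circ}\bigr). \]
Suppose $\lambda$ lies in the kernel and let $\Lambda^0 \subset \Lambda_{G/K}$ be the sublattice of weights pairing trivially with every spherical coroot, so that $\lambda \in \Lambda^0$. The pairing matrix $(\langle \sigma, \tau^\vee\rangle)_{\sigma,\tau}$ is a rescaling of the Cartan matrix of $\Phi_{G/K}$ and therefore invertible over $\QQ$, which gives $\Lambda^0 \cap \Lambda_{G/\rmN_G(K)} = 0$ (using $\Lambda_{G/\rmN_G(K)} = \ZZ\Delta^\rmn_{G/K}$). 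By Remark~\ref{oss:norm-reticoli} the composition $\Lambda^0 \hookrightarrow \Lambda_{G/K} \twoheadrightarrow \Lambda_{G/K}/\Lambda_{G/\rmN_G(K)} \simeq \calX(\rmN_G(K)/K)$ is then injective; let $\chi_\lambda$ denote the image of $\lambda$, and it suffices to show $\chi_\lambda = 0$.

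The character $\chi_\lambda$ admits a concrete description: pick a rational $B$-semi-invariant $f_\lambda \in \CC(G/K)^{(B)}_\lambda$, unique up to scalar by multiplicity-freeness. Since left and right $G$-actions on $G$ commute, the right action of $\rmN_G(K)$ on $G/K$ preserves $\CC f_\lambda$, acting on it through a character which is trivial on $K$, and this character is precisely $\chi_\lambda$. Normalizing $f_\lambda(eK) = 1$ (possible because $eK$ is in the open $B$-orbit) and evaluating $f_\lambda(bK)$ for $b \in B \cap \rmN_G(K)$ in the two available ways (left $B$-semi-invariance versus right $\rmN_G(K)$-semi-invariance) yields the key identity $\lambda(b) = \chi_\lambda(b)$. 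Combined with the hypothesis this gives $\chi_\lambda|_{B \cap \rmN_G(K)^\circ} = 0$.

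To conclude, I show that $B \cap \rmN_G(K)^\circ$ surjects onto the identity component $\tilde A := \rmN_G(K)^\circ/(K \cap \rmN_G(K)^\circ)$ of $\rmN_G(K)/K$. Given this surjection, $\chi_\lambda|_{\tilde A} = 0$ forces $\chi_\lambda$ to be torsion in $\calX(\rmN_G(K)/K)$; since $\Lambda_{G/K}$ is torsion-free, the injective image of $\Lambda^0$ contains no non-zero torsion, hence $\chi_\lambda = 0$ and $\lambda = 0$. For the surjection set $K' = K \cap \rmN_G(K)^\circ$, which is normal in $\rmN_G(K)^\circ$ because $K$ is normal in $\rmN_G(K)$; then $(B \cap \rmN_G(K)^\circ) \cdot K'$ is a closed subgroup of $\rmN_G(K)^\circ$, and the sphericity identity $\dim(B \cap H) = \dim B + \dim H - \dim G$ applied with $H = K$ and with $H = \rmN_G(K)$ gives $\dim\bigl((B \cap \rmN_G(K)^\circ) \cdot K'\bigr) = \dim \rmN_G(K)^\circ$, so it fills out $\rmN_G(K)^\circ$ by connectedness. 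The main obstacle I anticipate is precisely this last surjectivity step, which requires careful bookkeeping of connected components but is settled cleanly by the dimension count; the remaining steps are routine applications of the double-action compatibility and Remark~\ref{oss:norm-reticoli}.
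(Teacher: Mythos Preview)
Your argument is correct, but it takes a longer route than the paper's. The paper applies the exact sequence from \cite[Lemma 2.4]{Ga} directly with $K' = \rmN_G(K)^\circ$:
\[
0 \longrightarrow \Lambda_{G/K'} \longrightarrow \Lambda_{G/K} \longrightarrow \calX(B \cap K')^{B \cap K} \longrightarrow 0,
\]
so the kernel of restriction to $B \cap K'$ is identified in one stroke as $\Lambda_{G/K'}$. Since $\Lambda_{G/\bar K} = \ZZ\Delta^\rmsc_{G/K}$ sits in $\Lambda_{G/K'}$ with finite index, the pairing with spherical coroots is injective on $\Lambda_{G/K'}$, and the lemma follows immediately.

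You instead work through $\rmN_G(K)$ rather than its identity component: you use the isomorphism $\Lambda_{G/K}/\Lambda_{G/\rmN_G(K)} \simeq \calX(\rmN_G(K)/K)$ from Remark~\ref{oss:norm-reticoli}, give a concrete realization of $\chi_\lambda$ via the right action on $B$-eigenfunctions, and then have to prove the surjection $B \cap \rmN_G(K)^\circ \twoheadrightarrow (\rmN_G(K)/K)^\circ$ by a dimension count plus a torsion argument to finish. All of this is sound (your dimension computation and the closedness of the image under a homomorphism of algebraic groups are fine), but it essentially reconstructs by hand the content of the exact sequence that the paper simply quotes. The paper's choice of $K' = \rmN_G(K)^\circ$ from the outset sidesteps the component and torsion bookkeeping that occupies the second half of your argument.
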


\begin{proof}
Denote $K'= \rmN_G(K)^\circ$, then the restriction of characters to $B \cap K'$ induces an exact sequence
$$
	0 \longrightarrow \Lambda_{G/K'} \longrightarrow \Lambda_{G/K} \longrightarrow \calX(B \cap K')^{B \cap K} \longrightarrow 0
$$
(see e.g.\ \cite[Lemma 2.4]{Ga}). On the other hand  $\Lambda_{G/\bar K}\subset \Lambda_{G/K'}$ is a sublattice of finite index, and $\Lambda_{G/\bar K} = \ZZ \Delta^\rmsc_{G/K}$, thus every element in $\Lambda_{G/ K'}$ is uniquely determined by its values against the spherical roots.
\end{proof}

\section{Affine spherical varieties with root system of type A}	\label{sec:casi_tipo_A}

We now consider generalizations of Conjecture \ref{conj:stanley_cor} to other affine spherical homogeneous varieties for a simple algebraic group.

Let $(G,K)$ be a reductive spherical pair and set $X = G/K$. Let $G_X$ be the connected reductive group defined by the root datum $\calR_X = (\Xi_X, \Delta_X, \Delta_X^\vee)$, and correspondingly let $T_X \subset G_X$ and $B_X \subset G_X$ be a maximal torus and a Borel subgroup containing $T_X$ inducing $\calR_X$. For a spherical weight $\pi \in \Xi_X^+$ (in particular, for $\pi \in \Lambda_X^+$), let $V_X(\pi)$ be the irreducible $G_X$-module with highest weight $\pi$. In analogy with the symmetric case, we denote by $\leq_X$ the partial order on $\Xi_X$ defined by $\Delta^\rmn_X$, namely
$$
	\pi_1 \leq_X \pi_2 \stackrel{\mathrm{def}}{\Longleftrightarrow} \pi_2 - \pi_1 \in \NN \Delta^\rmn_X.
$$

\begin{conjecture} 	\label{conj:affine_spherical_A}
Let $(G,K)$ be a reductive spherical pair with $G$ simple and with $\Phi_{G/K}$ of type $\sfA$, and set $X = G/K$. Let $\lambda, \mu, \nu \in \Lambda_X^+$, then
$$
E_X(\nu) \subset E_X(\lambda) \cdot E_X(\mu) \Longleftrightarrow
\left\{ \begin{array}{c}
V_X(\nu) \subset V_X(\lambda) \otimes V_X(\mu)  \phantom{\Big|} \\
\nu \leq_X \lambda + \mu
\end{array} \right.
$$
\end{conjecture}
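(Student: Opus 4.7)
My plan is to verify Conjecture \ref{conj:affine_spherical_A} case-by-case on the non-symmetric pairs $(G,K)$ with $\Phi_{G/K}^\rmn$ of type $\sfA$, reducing each to the corresponding symmetric statement (Conjecture \ref{conj:stanley_cor}), which holds unconditionally in rank one by Stanley's Pieri rule and in general under Stanley's multiplication conjecture. The reduction exploits the two auxiliary symmetric varieties announced in the introduction: a symmetric $\widehat G$-variety $Y$ that recovers $X$ as a $G$-variety, and a symmetric $H$-variety $Z \hookrightarrow X$ for a suitable reductive subgroup $H \subset G$.

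\textbf{Construction of $Y$ and $Z$.} For each row of Table \ref{tab:sph_pairs_A}, excepting $\sfF_4/\Spin(9)$ and the few rows that reduce trivially to a previous case, I would first produce an overgroup $\widehat G \supset G$ and a symmetric subgroup $H_0 \subset \widehat G$ such that $\widehat G = G \cdot H_0$ and $G \cap H_0 = K$, whence $X \simeq \widehat G/H_0 =: Y$ acquires a $\widehat G$-action making it symmetric. The existence of $\widehat G$ and $H_0$ is guaranteed by the classification of reductive factorizations of Onishchik \cite{On} (see also \cite{LSS}), and inspection of this list dictates the admissible choices. The second auxiliary variety $Z = H/(H \cap K)$ arises from a reductive subgroup $H \subset G$ for which $H \cap K$ is a symmetric subgroup of $H$, with $Z$ realised as the $H$-orbit of the base point inside $X$; the choice of $H$ will be made so that the restricted root system of $Z$ is of type $\sfA$. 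I would then compare based root data and establish the root datum isogeny $\calR_X \to \calR_Y \oplus \calR_Z$ of Proposition \ref{prop:isogeny} by matching spherical roots, weight lattices and coroots.

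\textbf{Transfer of the decomposition rule.} The implication $\Rightarrow$ in Conjecture \ref{conj:affine_spherical_A} is immediate from the definition of $\Delta^\rmn_X$, multiplicity-freeness, and the fact that products of $K$-invariants remain $K$-invariant, so the content lies in the converse. Given $\lambda, \mu, \nu \in \Lambda_X^+$ with $V_X(\nu) \subset V_X(\lambda) \otimes V_X(\mu)$ and $\nu \leq_X \lambda + \mu$, I would decompose these data under the isogeny $\calR_X \to \calR_Y \oplus \calR_Z$ into a pair of tensor-product-with-order conditions for $G_Y$ and $G_Z$, invoke Conjecture \ref{conj:stanley_cor} on $Y$ and $Z$ separately (or its unconditional rank-one version when $\Phi^\rmn_X$ is a direct sum of $\sfA_1$ subsystems), and finally use Proposition \ref{prop:supporto_invarianti}, applied along the isomorphism $X \simeq Y$ and along the embedding $Z \hookrightarrow X$, to lift the resulting non-vanishing of the projections $\pi_\nu(v_\lambda \otimes v_\mu)$ on the symmetric varieties to the desired non-vanishing on $X$.

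\textbf{Main obstacle.} The bulk of the work is the case-by-case verification that the isogeny $\calR_X \to \calR_Y \oplus \calR_Z$ exists and truly exchanges the tensor-product-plus-order condition on $X$ with the conjunction of those on $Y$ and $Z$: subtle discrepancies between the normalizations $\Delta^\rmn_X$ and $\Delta_X$, between the lattices $\Xi_X$ and $\Xi_Y \oplus \Xi_Z$, between the partial orders $\leq_X$, $\leq_Y$, $\leq_Z$, or between the quasi-spherical characters carried by the $K$-, $H_0$- and $(H \cap K)$-semi-invariants could spoil the transfer. The exceptional case $X = \sfF_4/\Spin(9)$, where no suitable factorization of an overgroup seems to be available, must be excluded from the theorem and left to computational evidence.
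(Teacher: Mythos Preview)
Your overall architecture matches the paper's: build $Y=\widehat G/\widehat K$ via a reductive factorization, build $Z=H/K$ inside $X$, establish the isogeny $\calR_X\to\calR_Y\oplus\calR_Z$, and transfer Conjecture \ref{conj:stanley_cor} from $Y$ and $Z$ to $X$. Two points deserve correction.

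First, the implication $\Rightarrow$ is \emph{not} immediate. The condition $\nu\leq_X\lambda+\mu$ is indeed a tautology from the definition of $\Delta^\rmn_X$, but the condition $V_X(\nu)\subset V_X(\lambda)\otimes V_X(\mu)$ concerns representations of the auxiliary group $G_X$ attached to $\calR_X$, not of $G$; there is no a priori reason why an inclusion $E_X(\nu)\subset E_X(\lambda)\cdot E_X(\mu)$ should force a $G_X$-tensor-product inclusion. In the paper both directions are obtained simultaneously from the transfer: Theorem \ref{teo:decomposizione} shows the product condition on $X$ is equivalent to the conjunction of the product conditions on $Y$ and $Z$, and Corollary \ref{cor:decomposizione} then invokes Conjecture \ref{conj:stanley_cor} on $Y$ and $Z$ in both directions.

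Second, the transfer mechanism you sketch via Proposition \ref{prop:supporto_invarianti} is not the one the paper uses, and as stated it has a gap. Knowing $\pi_{\hat\nu}(v_{\hat\lambda}\otimes v_{\hat\mu})\neq 0$ on $Y$ only tells you that \emph{some} $E_{G/K}(\nu')$ with $\hat\varphi(\nu')=\hat\nu$ occurs in $E_{G/K}(\lambda')\cdot E_{G/K}(\mu')$ for suitable $\lambda',\mu'$ in the same $\hat\varphi$-fibres; you have not explained how the $Z$-information pins down $\nu'=\nu$, $\lambda'=\lambda$, $\mu'=\mu$. The paper avoids semi-invariant projections entirely and works with the $G$-algebra $\CC[G/K]$: it uses the ring isomorphism $\CC[G/K]\simeq\CC[\widehat G/\widehat K]$ and the restriction homomorphism $\CC[G/K]\to\CC[H/K]$ (shown to be nonzero on each $E_{G/K}(\lambda)$), then a commutative-diagram argument together with the injectivity of $(\hat\varphi,\bar\varphi)$ (Corollary \ref{cor:injectivity}) to conclude. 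This injectivity is exactly what resolves the ambiguity you left open.
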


With one exception (case Sph.A15 of Table \ref{tab:sph_pairs_A}), we will show that the previous conjecture holds true whenever $\Phi_X$ is a direct sum of rank one root systems - which happens in most of the cases - as a consequence of Stanley's Pieri rule for Jack symmetric functions \cite[Theorem 6.1]{St}. More generally, we will show that in all cases but case Sph.A15 the previous conjecture follows from Conjecture \ref{conj:stanley_cor} (and in particular from Stanley's Conjecture  \ref{conj:stanley}). Finally, in the remaining case Sph.A15 the conjecture is supported by computational experiments.

For the remainder of the section, $G$ will be a simple and simply connected algebraic group, and $K \subset G$ a connected reductive spherical subgroup such that the spherical root system $\Phi_{G/K}$ is of type $\sfA$. A list of such spherical pairs is given in Table \ref{tab:sph_pairs_A} (where we omit the cases of Table \ref{tab:sym_pairs_A}). For a full list of the reductive spherical pairs $(G,K)$ with $G$ simple and $K$ connected we refer to \cite{Kra} and \cite{KR}.

\begin{table}
\caption{Reductive spherical pairs $(G,K)$ with root system of type $\sfA$}	\label{tab:sph_pairs_A}
 \begin{tabular}{|c|c|c|c|c|} 
\hline 
& $\phantom{\Big|}$ $G$ & $K$ & $\Phi_{G/K}$ \\
    \hline    \hline
Sph.A6 & $\phantom{\Big|}$ 	$\SL(n)$, $n\geq3$ & $\SL(n-1)$	 & $\sfA_1$ \\
\hline
Sph.A7 & $\phantom{\Big|}$ 	$\SL(n)$, $n\geq3$ & $\GL(n-1)$	 & $\sfA_1$ \\
\hline
Sph.A8 & $\phantom{\Big|}$ 	$\SL(2n+1)$, $n\geq2$ & $\Sp(2n)$ & $\sfA_n \times \sfA_{n-1}$ \\
\hline
Sph.A9 & $\phantom{\Big|}$ 	$\SL(2n+1)$, $n\geq2$ & $\GL(1) \times \Sp(2n)$ & $\sfA_n \times \sfA_{n-1}$ \\
\hline \hline
Sph.A10 & $\phantom{\Big|}$ 	$\Sp(2n)$, $n\geq2$ & $\GL(1) \times \Sp(2n-2)$	 & $\sfA_1 \times \sfA_1$ \\
    \hline
Sph.A11 & $\phantom{\Big|}$ 	$\Sp(2n)$, $n\geq3$ & $\Sp(2) \times \Sp(2n-2)$	 & $\sfA_1$ \\
\hline \hline
Sph.A12 & $\phantom{\Big|}$ $\Spin(7)$ & $\sfG_2$ & $\sfA_1$ \\
	\hline
Sph.A13 & $\phantom{\Big|}$ $\Spin(9)$ & $\Spin(7)$ & $\sfA_1 \times \sfA_1$ \\
  \hline \hline
Sph.A14 & $\phantom{\Big|}$ 	$\Spin(8)$ & $\sfG_2$ & $\sfA_1 \times \sfA_1 \times \sfA_1$ \\
   \hline \hline
Sph.A15 & $\phantom{\Big|}$ $\sfF_4$ & $\Spin(9)$	 & $\sfA_1$ \\
    \hline \hline
Sph.A16 & $\phantom{\Big|}$ $\sfG_2$ & $\SL(3)$	 & $\sfA_1$ \\
    \hline
\end{tabular}
\end{table}

\begin{remark}
As already pointed out in Remark \ref{oss:root_data} for the symmetric case, also in this case it is possible to formulate Conjecture \ref{conj:affine_spherical_A} using the based root datum $\mathcal R^\rmn_X = (\Lambda_X, \Delta^\rmn_X, {\Delta^\rmn_X}^{\!\!\vee})$ rather than $\mathcal R_X$, and defining the reductive group $G_X$ accordingly. Using $\mathcal R^\rmn_X$ the formulation becomes also simpler, because the inequality $\nu\leq_X \lambda+\mu$ becomes redundant. However the formulation that we adopted seems to be preferable, as it fits inside the more general framework that we will consider in Section \ref{sec:Levi}. Notice also that in all cases but Sph.A10 it holds $\Delta_X = \Delta_X^\rmn$, so that in these cases we have indeed $\mathcal R_X = \mathcal R_X^\rmn$.
\end{remark}

\begin{remark}
Notice that the cases Sph.A7, Sph.A11 and Sph.A15 are actually symmetric, with non-reduced restricted root system of type $\sfBC_1$.

Notice also that the unique cases where $\Phi_{G/K}$ is not a direct sum of subsystems of type $\sfA_1$ are cases Sph.A8 and Sph.A9. In the first case, $G/K$ is the \textit{principal model variety} of $\SL(n)$: a homogeneous variety $G/H$ is said to be a model variety for $G$ if it is spherical and $\Lambda_{G/K}^+ = \calX(T)^+$ (see \cite{GZ1}, \cite{GZ2}). In the latter case, $G/K$ is the homogeneous space which correspond to the model variety of $\SL(n)$ in Luna's classification \cite{Lu2}.

All other cases, where $\Phi_{G/K}$ is a direct sum of subsystems of type $\sfA_1$, are related to some transitive action of a compact Lie group on a projective space, on a sphere, or on a product of spheres. Let indeed $G_\RR \subset G$ and $K_\RR \subset K$ be the associated compact real forms, then we have the following descriptions for $G_\RR/K_\RR$:
\begin{itemize}
		\item[i)] it is isomorphic to $\PP^n(\CC)$ in case Sph.A7, to $\PP^{2n}(\CC)$ in case Sph.A10, to $\PP^n(\HH)$ in case Sph.A11, to $\PP^2(\OO)$ in case Sph.A15 (where $\HH$ and $\OO$ denote respectively the algebras of the quaternions and of the octonions);
		\item[ii)] it is isomorphic to $\mathbb S^{2n-1}$ in case Sph.A6, to $\mathbb S^7$ in case Sph.A12, to $\mathbb S^{15}$ in case Sph.A13, to $\mathbb S^6$ in case Sph.A16, to $\mathbb S^7 \times \mathbb S^7$ in case Sph.A14.
\end{itemize}

By a theorem of Cartan, if $G_\RR/H_\RR$ is an irreducible compact Riemannian symmetric space of rank one, then $H_\RR$ acts transitively on the unit sphere in the isotropy representation of $H_\RR$. This gives indeed rise to the transitive actions on spheres which correspond to cases Sph.A6 and Sph.A13 (which respectively come from the families of symmetric spaces corresponding to cases Sph.A7 and Sph.A15). More generally, the transitive actions of a Lie group on a sphere were classified by Montgomery-Samelson and Borel, those which are not related to a symmetric space of rank one correspond to cases Sph.A12 and Sph.A13 (see e.g. \cite{STV}).
		
There is indeed another transitive action on a sphere which is associated to a rank one symmetric space, which is not included in Table \ref{tab:sph_pairs_A}: it is the transitive action of $\Sp(2,\RR) \times \Sp(2n,\RR)$ on $\mathbb S^{4n-1}$ (which comes from the family of symmetric spaces corresponding to case Sph.A11). This action corresponds to the reductive spherical pair
$$
	(G,K) = \big(\Sp(2) \times \Sp(2n), \ \diag(\Sp(2)) \times \Sp(2n-2) \big),
$$
in which case $\Phi_{G/K}$ is of type $\sfA_1 \times \sfA_1$. Even though $G$ is not simple, this case also fits in the general framework which allows us to deal with the cases of Table \ref{tab:sph_pairs_A}. We will analyze this case separately in Remark \ref{oss:sfera_transitiva_nonsemplice}.


\end{remark}


To deal with the cases of Table \ref{tab:sph_pairs_A}, a first useful remark is that cases Sph.A7, Sph.A9 and Sph.A11 respectively follow (almost immediately) from the companion cases Sph.A6, Sph.A8 and Sph.A10. This will be explained in Section \ref{ssec:other cases}.

All the remaining cases of Table \ref{tab:sph_pairs_A} - with the exception of case Sph.A15 - will follow from some cases of Table \ref{tab:sym_pairs_A}, thanks to a related factorization of a simple algebraic group as a product of two reductive subgroups. Such factorizations have been classified by Onishchik \cite{On} in the context of compact Lie groups (see also \cite[Section 4.5-4.6]{GO}) and by Liebeck, Saxl and Seitz algebraically in arbitrary characteristic \cite{LSS}. More precisely, in all the considered cases the multiplication will reduce to that of some symmetric varieties, which correspond to a decomposition of $\Phi_{G/K}$ into two orthogonal components. 


In particular, we will find a reductive overgroup $\widehat G \supset G$ and a symmetric subgroup $\widehat K \subset \widehat G$ such that $\widehat G = G \cdot \widehat K$ and $K = G \cap \widehat K$, yielding a $G$-equivariant isomorphism
$$
	G/K \simeq \widehat G/\widehat K
$$
and inducing a natural projection of weight monoids
$$
	\hat \varphi: \Lambda^+_{G/K} \longrightarrow \Lambda^+_{\widehat G/\widehat K} .
$$

As a consequence of a theorem of Luna \cite{Lu0}, notice that in the previous setting $G/K$ and $\widehat G/\widehat K$ are isomorphic as $G$-varieties if and only if $G/K$ embeds $G$-equivariantly into $\widehat G/\widehat K$ as a dense open subset.

We will also need to bring into the picture a second symmetric pair $(H,K)$ (possibly trivial), where $H$ is a suitable connected reductive subgroup of $G$ containing $K$ and the connected center of $\rmN_G(K)$, yielding a $H$-equivariant embedding
$$
	H/K \hookrightarrow G/K
$$
and inducing a second projection of weight monoids
$$
	\bar \varphi:\Lambda^+_{G/K} \longrightarrow \Lambda^+_{H/K}.
$$

%

The two maps $\hat \varphi$ and $\bar \varphi$ will allow us to identify the spherical functions of $G/K$, and describe their products, in terms of those of $\widehat G/\widehat K$ and $H/K$. Before treating the various cases in detail, let us explain here the general argument.

Suppose that we have:
\begin{itemize}
\item[i)] a connected reductive overgroup $\widehat G \supset G$ and a spherical subgroup $\widehat K \subset \widehat G$ such that $G/K \simeq \widehat G/\widehat K$,
\item[ii)] a connected reductive subgroup $H\subset G$ containing $K$ (and the connected center of $\rmN_G(K)$).
\end{itemize}

We also assume (it will always be true in our cases) that there is a compatible choice of Borel subgroups, that is, there exists a maximal torus and Borel subgroup $\widehat T \subset \widehat B$ of $\widehat G$ with $\widehat B \widehat K$ open in $\widehat G$ such that 
\begin{itemize}
	\item[i)] $T = \widehat T \cap G$ is a maximal torus of $G$, and $B = \widehat B \cap G$ is a Borel subgroup of $G$ with $B K$ is open in $G$,
	\item[ii)] $T_H = T \cap H$ is a maximal torus of $H$, and $B_H = B \cap H$ is a Borel subgroup of $H$ with $B_H K$ is open in $H$.
\end{itemize}

On the one hand the restriction of the $\widehat G$-action to $G$ induces an isomorphism of $G$-modules
$$
\bigoplus_{\mu \in \Lambda^+_{\widehat G/\widehat K}} E_{\widehat G/\widehat K}(\mu) \; = \; 
\CC[\widehat G/\widehat K] \stackrel{\sim}{\longrightarrow} 
\CC[G/K] \quad = 
\bigoplus_{\lambda \in \Lambda^+_{G/K}} E_{G/K}(\lambda) 
$$
Thus for all $\lambda \in \Lambda^+_{G/K}$ there exists a unique $\hat \lambda \in \Lambda^+_{\widehat G/\widehat K}$ such that 
$$
	\Hom_G\big(V_{\widehat G}\big(\hat \lambda\big),V_G(\lambda)\big) \neq 0,
$$
and we define $\hat\varphi(\lambda) = \hat \lambda$.

On the other hand, by definition, for all $\lambda \in \Lambda_{G/K}^+$ the invariant space $V_G(\lambda)^K$ is one dimensional. Looking at the decomposition of $V_G(\lambda)$ into simple $H$-modules, it follows that there exists a unique $\bar \lambda \in \Lambda_{H/K}^+$ such that
$$
	\Hom_{H}\big(V_G(\lambda), V_{H}\big(\bar \lambda\big)\big) \neq 0
$$
(and the dimension of the latter is necessarily one). Thus we define $\bar \varphi(\lambda) = \bar \lambda$.

Consider now the restriction of functions from $G/K$ to $H/K$
$$
\bigoplus_{\lambda \in \Lambda^+_{G/K}} E_{G/K}(\lambda) \; = \;
	\CC[G/K] \stackrel{\rho}{\longrightarrow}
\CC[H/K] \quad =
\bigoplus_{\mu \in \Lambda^+_{H/K}} E_{H/K}(\mu) .
$$
In the following lemma we show that $\rho(E_{G/K}(\lambda))$ is non-zero, hence it coincides with $E_{H/K}(\bar \lambda)$.

\begin{lemma}
Let $V \subset \CC[G/K]$ be an irreducible $G$-submodule. Then $\rho(V) \subset \CC[H/K]$ is an irreducible $H$-submodule.
\end{lemma}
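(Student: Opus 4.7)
The plan is to exploit the multiplicity-free structure of both $G/K$ and $H/K$, combined with the $H$-equivariance of $\rho$. Since $(H,K)$ is a symmetric pair with $K$ reductive, $H/K$ is itself an affine spherical $H$-variety, so $\CC[H/K]$ is a multiplicity-free $H$-module and the only irreducible $H$-modules appearing in it are those with a (necessarily one-dimensional) $K$-fixed subspace. On the other side, $V \simeq V_G(\lambda)$ for some $\lambda \in \Lambda^+_{G/K}$ and $\dim V^K = 1$ by the spherical property of $G/K$.

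The main step is the $H$-isotypic analysis of $V$. Writing $V|_H = \bigoplus_\mu V[\mu]$ as a decomposition into $V_H(\mu)$-isotypic components, taking $K$-invariants yields
\[
V^K = \bigoplus_\mu V[\mu]^K,
\]
and since $V[\mu]^K$ is the tensor product of the multiplicity space of $V_H(\mu)$ in $V$ with $V_H(\mu)^K$ (which has dimension at most one by sphericity of $H/K$), the condition $\dim V^K = 1$ forces the existence of a unique $\mu_0$ with $V[\mu_0]^K \neq 0$, and for such a $\mu_0$ the multiplicity of $V_H(\mu_0)$ in $V$ equals one. Hence $V[\mu_0]$ is an irreducible copy of $V_H(\mu_0)$. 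For every other isotypic component with $V[\mu] \neq 0$ and $\mu \neq \mu_0$, we must have $V_H(\mu)^K = 0$, so $V_H(\mu)$ does not embed into $\CC[H/K]$; since $\rho(V[\mu])$ is an $H$-submodule of $\CC[H/K]$ which is a quotient of $V[\mu]$ (hence a sum of copies of $V_H(\mu)$), it follows that $\rho(V[\mu]) = 0$. Therefore $\rho(V) = \rho(V[\mu_0])$ is either zero or isomorphic to $V_H(\mu_0)$.

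To rule out the zero case, I would use the spherical function $f_\lambda \in V^K = V[\mu_0]^K$, whose construction as a matrix coefficient $f_\lambda(gK) = \langle \psi_\lambda, g.v_\lambda\rangle$ gives $f_\lambda(eK) = \langle \psi_\lambda, v_\lambda\rangle \neq 0$ for suitable non-zero choices of the $K$-fixed vectors $v_\lambda \in V(\lambda)$ and $\psi_\lambda \in V(\lambda)^*$. Hence $\rho(f_\lambda)$ is non-zero at the basepoint $eK \in H/K$, so $\rho(V)$ is a non-zero irreducible $H$-submodule of $\CC[H/K]$, which necessarily coincides with $E_{H/K}(\mu_0) = E_{H/K}(\bar\lambda)$. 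I do not foresee a serious obstacle: the argument is purely formal once the multiplicity-free properties of both sides are in place, and the non-vanishing of $f_\lambda$ at the basepoint provides the required non-triviality of $\rho$ on $V[\mu_0]$.
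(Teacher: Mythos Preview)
Your argument is correct and follows essentially the same approach as the paper: both reduce to showing $\rho(V)\neq 0$ via the matrix-coefficient realization of the spherical function $f_\lambda$. The only minor difference is in the nonvanishing step: you evaluate $f_\lambda$ at the basepoint $eK$ and use $\langle\psi_\lambda,v_\lambda\rangle\neq 0$ (which indeed holds for \emph{any} nonzero $K$-fixed choices, since $(V(\lambda)^*)^K$ is canonically the dual of $V(\lambda)^K$ by reductivity of $K$), whereas the paper instead identifies $\rho(f_\lambda)$ up to scalar with the spherical function $f_{H,\bar\lambda}$ on $H/K$. Your route is slightly more direct; the paper's yields the extra information that $\rho(f_\lambda)$ is proportional to $f_{H,\bar\lambda}$.
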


\begin{proof}
By the discussion above we only need to show that $\rho(V) \neq 0$.

Suppose that $V \simeq V_G(\lambda)$. Fix $K$-invariant vectors $v_\lambda \in V_G(\lambda)$ and $\psi_\lambda \in V_G(\lambda)^*$, and define
$$
	f_{G,\lambda}(gK) = \langle \psi_\lambda, g.v_\lambda \rangle .
$$
Then $f_{G,\lambda} \in V^K$ is a non-zero $K$-invariant function.

Similarly, if $v_{\bar \lambda} \in V_{H}(\bar \lambda)$ and $\psi_{\bar \lambda} \in V_{H}(\bar \lambda)^*$ are $K$-invariant vectors, 
setting
$$
	f_{H,\bar \lambda}(hK) = \langle \psi_{\bar \lambda},h.v_{\bar \lambda} \rangle
$$
we get a non-zero $K$-invariant function on $H/K$.

On the other hand, there is a unique $H$-equivariant embedding of $V_{H}(\bar \lambda)$ inside $V_G(\lambda)$, and a unique $H$-equivariant embedding of $V_{H}(\bar \lambda)^*$ inside $V_G(\lambda)^*$. With respect to such embeddings, we have equalities
$$
V_G(\lambda)^K = V_{H}(\bar \lambda)^K
\qquad \qquad
(V_G(\lambda)^*)^K = (V_{H}(\bar \lambda)^*)^K .
$$
Since these invariant subspaces are one dimensional, it follows that the $K$-invariant vectors $v_\lambda$ and $v_{\bar \lambda}$ are proportional, and similarly for the $K$-invariant linear functions $\psi_\lambda$ and $\psi_{\bar \lambda}$. Thus the restriction $\rho(f_{G,\lambda})$ is a non-zero scalar multiple of $f_{H,\bar \lambda}$.
\end{proof}

We now show that the two maps $\hat \varphi$ and $\bar \varphi$, defined above, are actually morphisms of monoids.

\begin{proposition}
With notations and hypotheses as above, both the maps 
	$$\hat \varphi : \Lambda^+_{G/K} \longrightarrow \Lambda^+_{\widehat G/\widehat K}	 
\quad\mbox{and}\quad 
	\bar \varphi : \Lambda^+_{G/K} \longrightarrow \Lambda^+_{H/K}$$ 	
are additive maps. 
\end{proposition}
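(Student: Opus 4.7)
I would start with $\bar\varphi$, which is the easier of the two. By the lemma just above, the restriction of functions $\rho\colon \CC[G/K]\to\CC[H/K]$ sends the $G$-isotypic component $E_{G/K}(\lambda)$ onto the irreducible $H$-module $E_{H/K}(\bar\lambda)$. Applying this to the $B$-semi-invariant $h_\lambda\in E_{G/K}(\lambda)$ of $T$-weight $\lambda$, the image $\rho(h_\lambda)$ is a non-zero $B_H$-semi-invariant function in $E_{H/K}(\bar\lambda)$, of $T_H$-weight $\lambda|_{T_H}$. Since the $B_H$-semi-invariant in an irreducible $H$-module is unique up to scalar and has $T_H$-weight equal to the highest weight, this forces $\bar\lambda=\lambda|_{T_H}$. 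Additivity of $\bar\varphi$ then follows at once from the additivity of character restriction.

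For $\hat\varphi$, the plan is to identify its inverse with the restriction of $\widehat T$-characters to $T$. Since $B=\widehat B\cap G$, any $\widehat B$-semi-invariant $\hat h_\mu$ of $\widehat T$-weight $\mu$ is in particular a $B$-semi-invariant of $T$-weight $\mu|_T$. Under $\Psi$, $\Psi(\hat h_\mu)$ is a non-zero $B$-semi-invariant of $T$-weight $\mu|_T$ in $\CC[G/K]$; by the multiplicity-free property this forces $\mu|_T\in\Lambda^+_{G/K}$ and $\Psi(\hat h_\mu)\in\CC^*\cdot h_{\mu|_T}$, so that $E_{G/K}(\mu|_T)\subset\Psi(E_{\widehat G/\widehat K}(\mu))$ and $\hat\varphi(\mu|_T)=\mu$. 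Hence the map $r\colon\Lambda^+_{\widehat G/\widehat K}\to\Lambda^+_{G/K}$, $\mu\mapsto\mu|_T$, is a well-defined, injective morphism of monoids which has $\hat\varphi$ as a left inverse. Additivity of $\hat\varphi$ will follow once one establishes that $r$ is also surjective, for then $\hat\varphi=r^{-1}$.

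The surjectivity of $r$ is the main obstacle. The plan is to exploit the equality, inside $\widehat G/\widehat K\simeq G/K$, of the open $B$-orbit and the open $\widehat B$-orbit: both are dense open, and the former is contained in the latter, so they coincide. This yields the factorization $\widehat B=B\cdot(\widehat B\cap\widehat K)$ together with the identity $B\cap(\widehat B\cap\widehat K)=B\cap K$. Starting from $h_\lambda\in\CC[G/K]^{(B)}_\lambda$ with $\lambda\in\Lambda^+_{G/K}$, the corresponding character $\chi_\lambda$ of $B$ is trivial on $B\cap K$, and I would use the above factorization to extend it to a character $\hat\chi_\lambda$ of $\widehat B$ trivial on $\widehat B\cap\widehat K$, by setting $\hat\chi_\lambda(bk):=\chi_\lambda(b)$ for $b\in B$ and $k\in\widehat B\cap\widehat K$. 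Well-definedness of this extension is immediate, but its multiplicativity is the delicate point: it reduces to checking that $\widehat B\cap\widehat K$ normalizes $B$ modulo $B\cap K$, and this is where one invokes the specific structure of the factorizations $\widehat G=G\cdot\widehat K$ listed in Tables~\ref{tab:sym_pairs_A}--\ref{tab:sph_pairs_A}. Once $\hat\chi_\lambda$ is constructed, the corresponding $\widehat B$-semi-invariant function on the open $\widehat B$-orbit coincides with a scalar multiple of $\Psi^{-1}(h_\lambda)$ on that open set, hence extends to a regular $\widehat B$-semi-invariant function on all of $\widehat G/\widehat K$ and yields an element $\mu\in\Lambda^+_{\widehat G/\widehat K}$ with $\mu|_T=\lambda$.
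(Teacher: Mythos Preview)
Your argument for $\bar\varphi$ is correct and in fact sharper than what the paper does: you identify $\bar\varphi$ explicitly with the restriction of characters $\lambda \mapsto \lambda|_{T_H}$, from which additivity is immediate. (One small point worth making explicit: the non-vanishing of $\rho(h_\lambda)$ follows because $h_\lambda$, being a non-zero $B$-eigenfunction, does not vanish on the open $B$-orbit $BK/K$, which contains the base point $eK \in H/K$.)

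Your approach to $\hat\varphi$, however, has a genuine flaw that cannot be repaired. You aim to show that $\hat\varphi$ is the inverse of the restriction map $r \colon \mu \mapsto \mu|_T$, but this would force $\hat\varphi$ to be a \emph{bijection} of monoids, and in the cases at hand it is typically not injective. For instance in case Sph.A6, where $G/K = \SL(n)/\SL(n-1)$ and $\widehat G/\widehat K = \SO(2n)/\SO(2n-1)$, the explicit formula gives $\hat\varphi(\omega_1) = \hat\varphi(\omega_{n-1}) = \omega'_1$; correspondingly the two weight monoids have ranks $2$ and $1$, so no monoid map in the reverse direction can be surjective. The error enters at the sentence ``both are dense open, and the former is contained in the latter, so they coincide'': two dense open subsets, one contained in the other, need not be equal, and indeed the open $\widehat B$-orbit is in general a union of several $B$-orbits. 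Consequently the factorization $\widehat B = B \cdot (\widehat B \cap \widehat K)$ fails, and the proposed extension of the character $\chi_\lambda$ cannot be carried out.

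The paper proceeds differently, via the \emph{branching monoid}
\[
\Gamma(\widehat G, G) = \bigl\{(\pi,\lambda) \in \calX(\widehat T)^+ \times \calX(T)^+ \; : \; \Hom_G\bigl(V_G(\lambda), V_{\widehat G}(\pi)\bigr) \neq 0\bigr\},
\]
with its two coordinate projections $p$ and $q$. The point is that $q$ restricts to a monoid isomorphism from the submonoid $p^{-1}(\Lambda^+_{\widehat G/\widehat K})$ onto $\Lambda^+_{G/K}$ (this uses the $G$-equivariant isomorphism $\CC[\widehat G/\widehat K] \simeq \CC[G/K]$ together with multiplicity-freeness), and under this identification $\hat\varphi$ becomes the restriction of the \emph{other} projection $p$, hence is additive. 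The same device with $\Gamma(G,H)$ handles $\bar\varphi$ uniformly. This argument makes no bijectivity claim and works simultaneously for all the cases under consideration.
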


\begin{proof}
i) Let $\Gamma(\widehat G,G)$ be the \textit{branching monoid} associated to the pair $(\widehat G,G)$, namely
$$
	\Gamma(\widehat G,G) = \{(\pi,\lambda) \in \calX(\widehat T)^+ \times \calX(T)^+ \; | \; \Hom_{G}(V_G(\lambda), V_{\widehat G}(\pi)) \neq 0 \}
$$
This is indeed a finitely generated monoid, see \cite{AP}.

Consider the projections
$$
	p : \Gamma(\widehat G,G) \rightarrow \calX(\widehat T)^+,
	\qquad \qquad
	q : \Gamma(\widehat G,G) \rightarrow \calX(T)^+.
$$
Since $K$ is spherical in $G$, notice that $p^{-1}(\Lambda_{\widehat G/\widehat K}^+)$ is a submonoid of $\Gamma(\widehat G,G)$, and that $q$ induces an isomorphism $p^{-1}(\Lambda_{\widehat G/\widehat K}^+) \rightarrow \Lambda_{G/K}^+$.

If indeed $(\pi,\lambda) \in \Gamma(\widehat G,G)$ and $\pi\in \Lambda_{\widehat G/\widehat K}^+$, then by the isomorphism $\CC[G/K] \simeq \CC[\widehat G/\widehat K]$ it follows $\lambda \in \Lambda_{G/K}^+$. Conversely, if $\lambda \in \Lambda_{G/K}^+$, since $\CC[G/K] \simeq \CC[\widehat G/\widehat K]$ the multiplicity-free property implies that there is a unique $\pi \in \Lambda_{\widehat G/\widehat K}^+$ with $(\pi,\lambda) \in \Gamma(\widehat G,G)$, and by definition $\pi = \hat \varphi(\lambda)$. 

It is now clear that, under the isomorphism $\Lambda_{G/K}^+ \simeq p^{-1}(\Lambda_{\widehat G/\widehat K}^+)$, the map $\hat \varphi$ coincides with the restriction of the projection $p$. In particular, we see that $\hat \varphi$ is an additive map.

ii) The proof that $\bar \varphi$ is additive is similar to the previous one, by considering the branching monoid associated to the pair $(G,H)$
$$
	\Gamma(G,H) =  \{(\lambda, \pi) \in \calX(T)^+ \times \calX(T_H)^+ \; | \; \Hom_{H}(V_H(\pi), V_{G}(\lambda)) \neq 0 \}
$$ 
 together with the projections 
$$
	p : \Gamma(G,H) \rightarrow \calX(T)^+,
	\qquad \qquad
	q : \Gamma(G,H) \rightarrow \calX(T_H)^+.
$$
Then $p$ induces an isomorphism of monoids $q^{-1}(\Lambda_{H/K}^+) \rightarrow \Lambda_{G/K}^+$, and $\bar \varphi$ corresponds to the restriction of $q$.
%
%
\end{proof}

Therefore, we can extend $\hat \varphi$ and $\bar \varphi$ to morphisms of the associated weight lattices. When the lattice $\Xi_{G/K}$ is bigger than $\Lambda_{G/K}$, we will see in the cases under consideration that the extensions of $\hat \varphi$ and $\bar \varphi$ to $\Xi_{G/K}$ respectively take values in $\Xi_{\widehat G/\widehat K}$ and $\Xi_{H/K}$.  The description of the maps $\hat \varphi$ and $\bar \varphi$ will immediately imply the following.

\begin{proposition}\label{prop:isogeny}
Let $(G,K)$ be a reductive spherical pair with $\Phi_{G/K}$ of type $\sfA$, and suppose that we are in one of the following cases of Table \ref{tab:sph_pairs_A}:
$$
\mathrm{Sph.A6, \; Sph.A8, \; Sph.A10, \; Sph.A12, \; Sph.A13, \; Sph.A14, \; Sph.A16}.$$
Then there exist a connected reductive subgroup $H \subset G$ containing $K$ (possibily equal to $K$), a connected reductive overgroup $\widehat G \supset G$ and a symmetric subgroup $\widehat K \subset \widehat G$ with $\widehat K \cap G = K$ such that the map
$$(\hat \varphi,\bar \varphi) \; \colon \; \Xi_{G/K} \longrightarrow \Xi_{\widehat G/\widehat K} \oplus  \Xi_{H/K}$$
is an isogeny of based root data
%
%
\[
\calR_{G/K} \longrightarrow \calR_{\widehat G/\widehat K} \oplus  \calR_{H/K} .
\]
\end{proposition}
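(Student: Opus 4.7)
The plan is to work case by case through the seven items listed in the statement. For each spherical pair $(G,K)$ in the list, the first step is to exhibit the triple $(H, \widehat G, \widehat K)$ explicitly, using Onishchik's classification of factorizations $\widehat G = G \cdot \widehat K$ with $\widehat K$ a symmetric subgroup (or Liebeck--Saxl--Seitz in arbitrary characteristic), and choosing $H \subset G$ as the smallest connected reductive subgroup of $G$ containing both $K$ and the connected center of $\rmN_G(K)$; in most cases $H = K$, while in cases such as Sph.A10 and Sph.A7-like companions $H$ will be a rank-one extension of $K$. Then verify directly that $\widehat K \cap G = K$ and, via a dimension count on Bruhat cells, that $\widehat B \widehat K$ and $B K$ and $B_H K$ are all open in the respective groups, so that the compatible choice of Borels hypothesis holds. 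This reduces the problem to a bookkeeping exercise on lattices.

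\textbf{Computing the maps.} Given the triple, the morphism $\hat\varphi$ sends a spherical weight $\lambda \in \Lambda^+_{G/K}$ to the unique $\widehat T$-highest weight of the ambient $\widehat G$-module containing $V_G(\lambda)$; this will be computed using the explicit description of each $G/K$ as a symmetric $\widehat G$-variety and the generators of $\Lambda^+_{\widehat G/\widehat K}$ listed in the bullet points following Table~\ref{tab:sym_pairs_A}. The morphism $\bar\varphi$ sends $\lambda$ to the highest weight of the $H$-subrepresentation of $V_G(\lambda)$ containing the $K$-fixed line, and will be computed case by case from the explicit branching rules for the rank-one pairs $(H,K)$ appearing. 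I would then extend both $\hat\varphi$ and $\bar\varphi$ to the full lattice $\Xi_{G/K}$ and check that they land in $\Xi_{\widehat G/\widehat K}$ and $\Xi_{H/K}$ respectively; the preceding paragraph guarantees additivity, so it is enough to check this on a set of generators together with the elements of $\tfrac{1}{2}\Delta_{G/K}^\dag$.

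\textbf{Verifying the isogeny property.} To prove that $(\hat\varphi,\bar\varphi)$ is an isogeny of based root data one needs three things: injectivity after tensoring with $\QQ$, finite cokernel, and compatibility with the based root structure, i.e.\ that every $\sigma \in \Delta_{G/K}$ is sent either to $(\sigma',0)$ with $\sigma' \in \Delta_{\widehat G/\widehat K}$ or to $(0,\sigma'')$ with $\sigma'' \in \Delta_{H/K}$, together with the matching statement on coroots. The decomposition $\Phi_{G/K} = \sfA_{\widehat G/\widehat K} \oplus \sfA_{H/K}$ suggested by the tables makes this plausible: the $\sfA_1$-factors (or the $\sfA_n \times \sfA_{n-1}$ structure in cases Sph.A8) split cleanly into a part absorbed by the symmetric quotient and a part detected only by the restriction to $H$. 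The injectivity and finite-cokernel claims reduce to a rank and determinant computation which can be read off from the generators of $\Lambda^+_{G/K}$ and their images, a completely mechanical step.

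\textbf{Main obstacle.} The delicate part will be case Sph.A8 (and its companion Sph.A9 reduced to it in Section~\ref{ssec:other cases}), where $G/K$ is the principal model variety and $\Phi_{G/K}$ has rank $2n-1$ split as $\sfA_n \times \sfA_{n-1}$; here one must identify the two factors with the restricted root systems of two different symmetric quotients of types Sym.A2 and Sym.A3 (or similar), and the matching between spherical coroots on the two sides has to be done with care, in particular tracking the interaction with the halved roots $\tfrac12\Delta_{G/K}^\dag$ in cases Sym.A1. Cases Sph.A13 and Sph.A14, coming from the triality-related factorizations of $\Spin(8)$ and $\Spin(9)$, also require a careful separate analysis because the overgroup $\widehat G$ is not of the same type as $G$; the other four cases should be essentially bookkeeping once the factorizations are in hand.
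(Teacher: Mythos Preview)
Your approach is essentially the paper's: a case-by-case tabulation of $\widehat G$, $\widehat K$, $H$, the spherical roots, the fundamental spherical weights, and the explicit images under $\hat\varphi$ and $\bar\varphi$, from which the isogeny claim is read off by inspection.

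Two small corrections to your speculative details. First, your recipe for $H$ (``smallest connected reductive subgroup containing $K$ and the connected center of $\rmN_G(K)$'') is not what the paper uses and would fail: in Sph.A13 and Sph.A14 the subgroup $K$ is semisimple with self-normalizing image up to finite center, so your recipe gives $H=K$, whereas the paper takes $H=\Spin(8)$ and $H=\Spin(7)$ respectively (and in Sph.A10 it takes $H$ with $H/K\simeq\Sp(2)/\GL(1)$). The choice of $H$ is ad hoc in each case, guided by the desired splitting of $\Phi_{G/K}$, not by a uniform formula. Second, in Sph.A8 both symmetric pieces are of type Sym.A3 (namely $\SL(2n+2)/\Sp(2n+2)$ and $\GL(2n)/\Sp(2n)$), not Sym.A2 and Sym.A3; and the issue of halved roots in $\tfrac12\Delta_{G/K}^\dag$ only arises in Sph.A10 among the seven cases, so that worry is largely moot.
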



To be clear about notation and terminology, $\calR_{\widehat G/\widehat K} \oplus  \calR_{H/K}$ is the based root datum defined by
$$\big(\Xi_{\widehat G/\widehat K} \oplus  \Xi_{H/K},\ \Delta_{\widehat G/\widehat T}\sqcup \Delta_{H/K},\ \Delta_{\widehat G/\widehat T}^\vee\sqcup \Delta_{H/K}^\vee\big)$$ 
and $\varphi=(\hat \varphi,\bar \varphi)$ is an \textit{isogeny of based root data} in the following (usual) sense:
\begin{itemize}
	\item[i)] $\varphi : \Xi_{G/K} \rightarrow \Xi_{\widehat G/\widehat K} \oplus  \Xi_{H/K}$ is injective with finite cokernel, 
	\item[ii)] $\varphi$ restricts to a bijection between $\Delta_{G/K}$ and $\Delta_{\widehat G/\widehat T}\sqcup \Delta_{H/K}$, 
	\item[iii)] the dual map satisfies $\varphi^\vee(\varphi(\sigma)^\vee)=\sigma^\vee$, for all $\sigma \in \Delta_{G/K}$.
\end{itemize}

In particular, we stress the following property.

\begin{corollary}	\label{cor:injectivity}
Every weight in $\Lambda_{G/K}$ is uniquely determined by its images in $\Lambda_{\widehat G/\widehat K}$ and in $\Lambda_{H/K}$.
\end{corollary}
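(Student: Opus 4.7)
The plan is to deduce the corollary essentially for free from Proposition \ref{prop:isogeny}, which does all the real work. Recall that by definition of an isogeny of based root data, the map
\[
(\hat\varphi,\bar\varphi)\colon \Xi_{G/K}\longrightarrow \Xi_{\widehat G/\widehat K}\oplus \Xi_{H/K}
\]
is \emph{injective} with finite cokernel. So I would simply observe that $\Lambda_{G/K}\subset \Xi_{G/K}$ is a sublattice and that, by construction of the maps $\hat\varphi$ and $\bar\varphi$ as morphisms between the corresponding weight monoids, the restriction of $(\hat\varphi,\bar\varphi)$ to $\Lambda_{G/K}$ takes values in $\Lambda_{\widehat G/\widehat K}\oplus \Lambda_{H/K}$. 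An injective map restricted to a subgroup remains injective, hence two elements of $\Lambda_{G/K}$ with the same image in $\Lambda_{\widehat G/\widehat K}\oplus \Lambda_{H/K}$ must coincide.

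The only genuine content in the argument is the injectivity statement contained in Proposition \ref{prop:isogeny}, which is assumed as already proved. Thus the proof reduces to a one-line remark. I would phrase it as: the map $(\hat\varphi,\bar\varphi)$ is injective on $\Xi_{G/K}$ by Proposition \ref{prop:isogeny}, so in particular it is injective on the sublattice $\Lambda_{G/K}$, which is exactly the assertion.

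There is no real obstacle here; the corollary is just singling out a consequence that will be convenient to invoke later (presumably when comparing $G$-isotypic decompositions of $\CC[G/K]$ with those of $\CC[\widehat G/\widehat K]$ and $\CC[H/K]$). The only thing to double-check while writing up is that $\hat\varphi(\lambda)\in\Lambda_{\widehat G/\widehat K}$ and $\bar\varphi(\lambda)\in\Lambda_{H/K}$ for every $\lambda\in\Lambda_{G/K}$, which is automatic from how these maps were defined on $\Lambda^+_{G/K}$ and then extended additively to the ambient lattice.
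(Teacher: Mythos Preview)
Your proposal is correct and matches the paper's approach exactly: the paper states the corollary with the phrase ``In particular, we stress the following property'' immediately after Proposition \ref{prop:isogeny}, giving no separate proof, so it is indeed meant as the one-line consequence of injectivity that you describe.
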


In the next two subsections we will deal with the various cases of Proposition \ref{prop:isogeny}. 

For every spherical pair $(G,K)$ we provide the groups $\widehat G$, $\widehat K$ and $H$ with  the required properties. The inclusions among the given groups are well known and all the mentioned properties can be read off from the classification of the factorizations of simple algebraic groups \cite{On}, \cite{LSS} (see also \cite[Section 4.5-4.6]{GO}).

For each affine spherical variety $G/K$, $\widehat G/\widehat K$ and $H/K$ we describe the sets of spherical roots $\Delta_{G/K}$, $\Delta_{\widehat G/\widehat K}$ and $\Delta_{H/K}$ (which we call \emph{normalized spherical roots}), and minimal set of generators of the monoids $\Lambda^+_{G/K}$, $\Lambda^+_{\widehat G/\widehat K}$, $\Lambda^+_{H/K}$ (which we call \emph{fundamental spherical weights}). These data are also well known and can be read off for example from \cite{BPe}. 

Finally, in each case, we describe the maps $\hat \varphi$ and $\bar \varphi$. Being additive, it is enough to describe the images of the fundamental spherical weights of $G/K$.

\subsection{Indecomposable cases}\label{ssec:indecomposable cases}

We consider here the cases Sph.A12 and Sph.A16 where we take the subgroup $H \subset G$ as $K$ itself (thus $\Xi_{H/K} = 0$ and $\Delta_{H/K} = \emptyset$). The overgroup $\widehat G \supset G$ and the symmetric subgroup $\widehat K \subset \widehat G$ with the prescribed properties are given below.

From its explicit description, it will be clear that the map
$$
	\hat \varphi : \Lambda_{G/K} \longrightarrow \Lambda_{\widehat G/\widehat K}
$$
induces an isomorphism of root data
\[
\calR_{G/K} \stackrel{\sim}{\longrightarrow} \calR_{\widehat G/\widehat K}.
\]


\begin{itemize}
\item[\textbf{(Sph.A12)}] $G/K = \Spin(7) / \sfG_2$ 
$$\widehat G / \widehat K = \SO(8) / \SO(7)$$
%
%
%
$$
\begin{array}{|c|c|c|}
\hline
\phantom{\Big|} & \mbox{norm. spherical roots} & \mbox{fund. sph. weights}\\
\hline \hline
\phantom{\Big|}G/K\phantom{\Big|} & \alpha_1 + 2 \alpha_2 + 3 \alpha_3 & \omega_3 \\
\hline
\phantom{\Big|}\widehat G / \widehat K\phantom{\Big|} & 2(\alpha'_1 + \alpha'_2) + \alpha'_3 + \alpha'_4 & \omega'_1 \\
\hline
\end{array}
$$

$$\begin{array}{|ccccc|}
\hline
\phantom{\Big|} \hat \varphi & : & a\omega_3 & \longmapsto & a\omega_1' \\
\hline
\end{array}$$

\bigskip
\item[\textbf{(Sph.A16)}] $G / K = \sfG_2 / \SL(3)$ 
$$\widehat G / \widehat K = \SO(7) / \SO(6)$$
%
%
%
$$
\begin{array}{|c|c|c|}
\hline
\phantom{\Big|} & \mbox{norm. spherical roots} & \mbox{fund. sph. weights}\\
\hline \hline
\phantom{\Big|} G/K \phantom{\Big|} & 4\alpha_1 + 2 \alpha_2 & \omega_1 \\
\hline
\phantom{\Big|}\widehat G / \widehat K\phantom{\Big|} & 2(\alpha'_1 + \alpha'_2 + \alpha'_3) &  \omega'_1 \\
\hline
\end{array}
$$

$$
\begin{array}{|ccccc|}
\hline
\phantom{\Big|} \hat \varphi & : & a\omega_1 & \longmapsto & a\omega_1' \\
\hline
\end{array}
$$

\end{itemize}

\bigskip
\begin{proposition}\label{prop:indec}
Conjecture \ref{conj:affine_spherical_A} holds true if $(G,K)$ is one of the spherical pairs Sph.A12, Sph.A16 of Table \ref{tab:sph_pairs_A}.
\end{proposition}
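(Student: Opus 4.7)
The plan is to leverage the $G$-equivariant isomorphism $G/K\simeq\widehat G/\widehat K$, which comes from the factorization $\widehat G=G\cdot\widehat K$ with $K=G\cap\widehat K$, in order to reduce the multiplication problem for $X:=G/K$ to the analogous problem for the rank-one symmetric variety $Y:=\widehat G/\widehat K$, where Conjecture \ref{conj:stanley_cor} is a theorem thanks to Stanley's Pieri rule.

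First I would identify the algebras $\CC[X]$ and $\CC[Y]$ via the $G$-equivariant isomorphism above. Since the map $\hat\varphi\colon\Lambda^+_{X}\to\Lambda^+_{Y}$ displayed in the tables is a bijection of monoids in both cases Sph.A12 and Sph.A16, the $G$-isotypic and $\widehat G$-isotypic decompositions of this common vector space have the same number of summands. Because by construction $E_{X}(\lambda)$ is contained in $E_{Y}(\hat\varphi(\lambda))$, bijectivity of $\hat\varphi$ together with multiplicity-freeness on both sides forces the equality $E_{X}(\lambda)=E_{Y}(\hat\varphi(\lambda))$ of subspaces of the coordinate ring. Multiplication of irreducible submodules then transports verbatim across the isomorphism:
\[
E_{X}(\nu)\subset E_{X}(\lambda)\cdot E_{X}(\mu) \iff E_{Y}(\hat\varphi(\nu))\subset E_{Y}(\hat\varphi(\lambda))\cdot E_{Y}(\hat\varphi(\mu)).
\]

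Next I would translate the combinatorial right-hand side. The isomorphism of based root data $\calR_{X}\simeq\calR_{Y}$ supplied by Proposition \ref{prop:isogeny} yields an isomorphism between the reductive groups $G_X$ and $G_Y$ intertwining $V_X(\lambda)$ with $V_Y(\hat\varphi(\lambda))$, so the tensor-product condition in Conjecture \ref{conj:affine_spherical_A} for $X$ matches the one in Conjecture \ref{conj:stanley_cor} for $Y$. A glance at the tables shows that $\Delta^\rmn_X=\Delta_X$ and $\widetilde\Delta_Y=\Delta_Y$ in both cases (the spherical root of $X$ is a combination of simple roots not contained in $2\Delta$, and $Y$ is a rank-one symmetric variety), hence $\hat\varphi$ also restricts to a bijection $\Delta^\rmn_X\to\widetilde\Delta_Y$ and the partial orders $\leq_X$ and $\leq_Y$ correspond. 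Applying Conjecture \ref{conj:stanley_cor} to $Y$ then concludes the argument.

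The only genuinely non-routine point is the identification of isotypic components in the first step: one must know that the inclusion $E_{X}(\lambda)\subset E_{Y}(\hat\varphi(\lambda))$ is an equality, or equivalently that each of the relevant irreducible $\widehat G$-modules remains irreducible upon restriction to $G$. This is forced by the bijectivity of $\hat\varphi$ combined with multiplicity-freeness on both sides, where the explicit case-by-case description of $\hat\varphi$ from the tables is used; beyond that, everything is formal manipulation.
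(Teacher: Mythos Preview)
Your proof is correct and follows essentially the same route as the paper: transport the multiplication problem along the $G$-equivariant isomorphism $G/K\simeq\widehat G/\widehat K$, use the isomorphism of root data $\calR_{G/K}\simeq\calR_{\widehat G/\widehat K}$ to match the combinatorial conditions, and invoke Stanley's Pieri rule for the rank-one symmetric variety $\widehat G/\widehat K$. The only cosmetic difference is that the paper asserts directly that each spherical $\widehat G$-module stays irreducible upon restriction to $G$, whereas you deduce the equality $E_X(\lambda)=E_Y(\hat\varphi(\lambda))$ from the bijectivity of $\hat\varphi$ together with multiplicity-freeness; these are equivalent ways of saying the same thing.
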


\begin{proof}
By restricting the action from $\widehat G$ to $G$, every spherical module for the pair $(\widehat G, \widehat K)$ remains irreducible, and yields a spherical module for the pair $(G, K)$. This gives a canonical bijection between the two: the restriction of every irreducible $\widehat G$-module containing a nontrivial $\widehat K$-invariant subspace is irreducibile as a $G$-module as well, and it is obviously $K$-spherical.

In particular, the decomposition of the product of two spherical functions of $G/K$ immediately reduces to the decomposition of the product of the corresponding spherical functions of $\widehat G/\widehat K$. 


By Stanley's Pieri rule for Jack symmetric functions, we know that Conjecture \ref{conj:stanley_cor} holds true for the symmetric pair $(\widehat G,\widehat K)$. On the other hand $\hat \varphi$ induces an isomorphism of root data $\calR_{G/K} \simeq \calR_{\widehat G/\widehat K}$. Thus Conjecture \ref{conj:affine_spherical_A} holds true for $(G,K)$ as well.
\end{proof}

\subsection{Decomposable cases}
\label{ssec:decomposable cases}

We now consider the cases Sph.A6, Sph.A8, Sph.A10, Sph.A13 and Sph.A14 of Table \ref{tab:sph_pairs_A}: we define the subgroup $H \subset G$, the overgroup $\widehat G \supset G$ and the symmetric subgroup $\widehat K \subset \widehat G$ with the required properties. Notice that $H$ will always contain the connected center of $\rmN_G(K)$. As already mentioned, for the given inclusions among groups and their related properties one can see \cite{On}, \cite{LSS} and also \cite[Section 4.5-4.6]{GO}.

We also provide an explicit description of the maps
$$
	\hat \varphi: \Lambda_{G/K} \longrightarrow \Lambda_{\widehat G/\widehat K} 
\quad\mbox{and}\quad
	\bar \varphi: \Lambda_{G/K} \longrightarrow \Lambda_{H/K}.
$$
Proposition \ref{prop:isogeny} will be an immediate consequence of these descriptions.\\

\begin{itemize}
	

\item[\textbf{(Sph.A6)}] $G / K = \SL(n) / \SL(n-1)$ 
$$\widehat G / \widehat K = \SO(2n) / \SO(2n-1)$$ 
$$H / K = \GL(n-1) / \SL(n-1)$$
%
%
$$
\begin{array}{|c|c|c|}
\hline
\phantom{\Big|} & \text{normalized spherical roots} & \text{fund. sph. weights}\\
\hline \hline
\phantom{\Big|} G/K \phantom{\Big|} & \alpha_1 + \ldots + \alpha_{n-1} & \omega_1, \ \omega_{n-1} \\
\hline
\phantom{\Big|}\widehat G / \widehat K\phantom{\Big|} & 2(\alpha'_1 + \ldots + \alpha'_{n-2}) + \alpha'_{n-1} + \alpha'_n & \omega'_1 \\
\hline
\phantom{\Big|}H/K \phantom{\Big|} & \text{ none } & \pm\omega''_{n-1}\\
\hline
\end{array}
$$

$$
\begin{array}{|ccccl|}
\hline
\phantom{\Big|} \hat \varphi & : & a \omega_1 + b\omega_{n-1} & \longmapsto & (a+b)\omega'_1  \\ 
\hline
\phantom{\Big|} \bar \varphi & : & a \omega_1 + b\omega_{n-1} & \longmapsto  & (b-a) \omega''_{n-1} \\
\hline
\end{array}
$$



\bigskip
\item[\textbf{(Sph.A8)}] $G / K = \SL(2n+1) / \Sp(2n)$ 
$$\widehat G / \widehat K = \SL(2n+2) / \Sp(2n+2)$$ 
$$H / K = \GL(2n) / \Sp(2n)$$
%
%
$$
\begin{array}{|c|c|c|}
\hline
\phantom{\Big|} & \text{normalized spherical roots} & \text{fund. spherical weights}\\
\hline \hline
\phantom{\Big|} G/K \phantom{\Big|} & \alpha_i + \alpha_{i+1}\ (i=1,\ldots,2n-1) & \omega_i\ (i=1,\ldots,2n) \\
\hline
\phantom{\Big|}\widehat G / \widehat K\phantom{\Big|} & \alpha'_{2i-1} + 2\alpha'_{2i} + \alpha'_{2i+1}\ (i=1,\ldots,n) & \omega'_{2i}\ (i=1,\ldots, n) \\
\hline
\phantom{\Big|}H/K \phantom{\Big|} & \alpha''_{2i-1} + 2\alpha''_{2i} + \alpha''_{2i+1}\ (i=1,\ldots, n-1) & \omega''_{2i}\ (i=1,\ldots,n-1),\ \pm\omega''_{2n} \\
\hline
\end{array}
$$

$$
\begin{array}{|ccccl|}
\hline
\phantom{\Big|}\hat \varphi & : & \sum_{i=1}^{2n} a_i \omega_i  & \longmapsto & \sum_{i=1}^n (a_{2i-1} + a_{2i}) \omega'_{2i}  \\
\hline
\phantom{\Big|}\bar \varphi & : & \sum_{i=1}^{2n} a_i \omega_i & \longmapsto & \sum_{i=1}^{n-1} (a_{2i}+a_{2i+1}) \omega''_{2i} + (a_{2n} - \sum_{i=1}^n a_{2i-1}) \omega''_{2n} \\
\hline
\end{array}
$$



\bigskip
\item[\textbf{(Sph.A10)}] $G / K = \Sp(2n) / [\GL(1) \times \Sp(2n-2)]$ 
$$\widehat G / \widehat K = \SL(2n) / \GL(2n-1)$$ 
$$H / K = \Sp(2) / \GL(1)$$
%
%
$$
\begin{array}{|c|c|c|}
\hline
\phantom{\Big|} & \text{normalized spherical roots} & \text{fund. sph. weights}\\
\hline \hline
\phantom{\Big|}G/K\phantom{\Big|} & \alpha_1, \ \alpha_1 + 2(\alpha_2 + \ldots + \alpha_{n-1}) + \alpha_n & 2\omega_1, \ \omega_2 \\
\hline
\phantom{\Big|}\widehat G / \widehat K\phantom{\Big|} & \alpha'_1 + \ldots + \alpha'_{2n-1} & \omega'_1 + \omega'_{2n-1} \\
\hline
\phantom{\Big|}H/K \phantom{\Big|} &  \alpha''_1 & 2\omega''_1\\
\hline
\end{array}
$$

$$
\begin{array}{|ccccl|}
\hline
\phantom{\Big|} \hat \varphi & : & 2a \omega_1+ b \omega_2 & \longmapsto & (a+b) (\omega'_1 + \omega'_{2n-1}) \\
\hline
\phantom{\Big|} \bar \varphi & : & 2a \omega_1 + b \omega_2 &\longmapsto & 2a \omega''_1 \\
\hline
\end{array}
$$


\bigskip
\item[\textbf{(Sph.A13)}] $G / K = \Spin(9) / \Spin(7)$ 
$$\widehat G / \widehat K = \SO(16) / \SO(15)$$ 
$$H/K = \Spin(8) / \Spin(7)$$
%
%
$$
\begin{array}{|c|c|c|}
\hline
\phantom{\Big|} & \text{normalized spherical roots} & \text{fund. sph. weights}\\
\hline \hline
\phantom{\Big|}G/K \phantom{\Big|} & \alpha_1+\alpha_2+\alpha_3+\alpha_4, \ \alpha_2 + 2\alpha_3 + 3\alpha_4 & \omega_1, \ \omega_4 \\
\hline
\phantom{\Big|}\widehat G / \widehat K\phantom{\Big|} & 2(\alpha'_1 + \ldots + \alpha'_6) + \alpha'_7+\alpha'_8 & \omega'_1 \\
\hline
\phantom{\Big|} H/K \phantom{\Big|} & \alpha''_1+2\alpha''_2+\alpha''_3+2\alpha''_4 & \omega''_4\\
\hline
\end{array}
$$

$$
\begin{array}{|ccccl|}
\hline
\phantom{\Big|} \hat \varphi & : & a \omega_1 + b \omega_4 & \longrightarrow & (2a+b) \omega'_1 \\
\hline
\phantom{\Big|} \bar \varphi & : & a \omega_1 + b \omega_4 & \longmapsto & b \omega''_4 \\
\hline
\end{array}
$$


\bigskip
\item[\textbf{(Sph.A14)}] $G / K = \Spin(8) / \sfG_2$ 
$$\widehat G / \widehat K = \Spin(8)/\Spin(7) \times \Spin(8)/\Spin(7)$$ 
(where the two copies of $\Spin(7)$ are not conjugated in $\Spin(8)$)
$$H/K = \Spin(7)/ \sfG_2$$
%

$$
\begin{array}{|c|c|c|}
\hline
\phantom{\Big|} & \text{normalized spherical roots} & \text{fund. sph. weights}\\
\hline \hline
\phantom{\Big|}G/K \phantom{\Big|} & \alpha_1+\alpha_2+\alpha_3, \ \alpha_1+\alpha_2+\alpha_4, \ \alpha_2 + \alpha_3 + \alpha_4 & \omega_1, \  \omega_3, \ \omega_4 \\
\hline
\phantom{\Big|}\widehat G / \widehat K\phantom{\Big|} & 2\alpha'_1 + 2\alpha'_2 + \alpha'_3 + \alpha'_4, \ \alpha''_1 + 2\alpha''_2+ 2\alpha''_3 + \alpha''_4 & \omega'_1, \ \omega''_3 \\
\hline
\phantom{\Big|} H/K \phantom{\Big|} &  \alpha'''_1+2\alpha'''_2+3\alpha'''_3 & \omega'''_3\\
\hline
\end{array}
$$

$$
\begin{array}{|ccccl|}
\hline
\phantom{\Big|} \hat \varphi & : & a \omega_1 + b \omega_3 + c \omega_4 & \longmapsto & (a+c)\omega'_1 + (b+c)\omega''_3 \\
\hline
\phantom{\Big|} \bar \varphi & : & a \omega_1 + b \omega_3 + c \omega_4 & \longmapsto &  (a+b)\omega'''_3 \\
\hline
\end{array}
$$

\end{itemize}

\bigskip

Given $\lambda \in \Lambda_{G/K}^+$, recall our notation $\hat \lambda=\hat \varphi(\lambda)$ and $\bar \lambda=\bar \varphi(\lambda)$.  If $\mu \in \Lambda^+_{G/K}$, we write $\mu \sim \lambda$ if $\hat \mu =\hat \lambda$. Identifying $\CC[\widehat G/ \widehat K] \simeq \CC[G/K]$, by definition we have then
$$
	E_{\widehat G/\widehat K}(\hat \lambda) = \bigoplus_{\lambda' \sim \lambda} E_{G/K}(\lambda') .
$$

\begin{theorem}	\label{teo:decomposizione}
Let $(G,K)$ be one of the following spherical pairs of Table  \ref{tab:sph_pairs_A}
$$ \mathrm{Sph.A6, \ Sph.A8, \ Sph.A10, \ Sph.A13, \ Sph.A14. }$$
Then for all $\lambda,\mu,\nu\in\Lambda^+_{G/K}$ we have
$$
\begin{array}{ccc}
E_{G/K}(\nu) \subset E_{G/K}(\lambda) \cdot E_{G/K}(\mu) &
\Longleftrightarrow &
\left\{
\begin{array}{c}
	E_{\widehat G/\widehat K}\big(\hat\nu\big) \subset E_{\widehat G/\widehat K}\big(\hat\lambda\big) \cdot E_{\widehat G/\widehat K}\big(\hat\mu\big) \phantom{\Big|} \\ 
	E_{H/K}(\bar \nu) \subset E_{H/K}(\bar \lambda) \cdot E_{H/K}(\bar\mu) \end{array}\right.
\end{array}
$$
\end{theorem}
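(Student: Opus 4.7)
The plan is to prove the two directions separately; the forward direction is a direct consequence of the identifications used, while the reverse direction requires combining the two hypotheses in a non-trivial way.

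Forward direction $(\Rightarrow)$: Assume $E_{G/K}(\nu) \subset E_{G/K}(\lambda) \cdot E_{G/K}(\mu)$. Since the identification $\CC[G/K] \cong \CC[\widehat G/\widehat K]$ is $G$-equivariant and realises every $E_{G/K}(\lambda')$ as a $G$-submodule of $E_{\widehat G/\widehat K}(\widehat{\lambda'})$, the LHS is contained in the $\widehat G$-submodule $E_{\widehat G/\widehat K}(\hat\lambda) \cdot E_{\widehat G/\widehat K}(\hat\mu)$. Since $\CC[\widehat G/\widehat K]$ is multiplicity-free as a $\widehat G$-module, the containment of the $G$-submodule $E_{G/K}(\nu)$ forces the containment of the ambient $\widehat G$-isotype $E_{\widehat G/\widehat K}(\hat\nu)$, giving the first condition on the right-hand side. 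For the second, apply the $H$-equivariant algebra homomorphism $\rho \colon \CC[G/K] \to \CC[H/K]$, which by the preceding lemma maps each $E_{G/K}(\lambda')$ onto $E_{H/K}(\bar{\lambda'})$; applying $\rho$ to the original inclusion yields the desired inclusion in $\CC[H/K]$.

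Reverse direction $(\Leftarrow)$: Assume both conditions on the right. From the first condition and the identification, $E_{G/K}(\nu)$ lies inside the $\widehat G$-submodule
\[
E_{\widehat G/\widehat K}(\hat\lambda) \cdot E_{\widehat G/\widehat K}(\hat\mu) \;=\; \sum_{\lambda' \sim \lambda,\; \mu' \sim \mu} E_{G/K}(\lambda') \cdot E_{G/K}(\mu').
\]
By multiplicity-freeness of $\CC[G/K]$, some pair $(\lambda_0, \mu_0)$ with $\lambda_0 \sim \lambda$ and $\mu_0 \sim \mu$ must satisfy $E_{G/K}(\nu) \subset E_{G/K}(\lambda_0) \cdot E_{G/K}(\mu_0)$. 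Applying the forward direction to this pair gives $E_{H/K}(\bar\nu) \subset E_{H/K}(\bar\lambda_0) \cdot E_{H/K}(\bar\mu_0)$, and combining with the second hypothesis both the original pair $(\lambda, \mu)$ and the extracted pair $(\lambda_0, \mu_0)$ produce the same inclusion in $\CC[H/K]$. Using Corollary \ref{cor:injectivity} (injectivity of $(\hat\varphi, \bar\varphi)$) together with the explicit description of the isogeny in Section \ref{ssec:decomposable cases}, the family of admissible $(\lambda_0, \mu_0)$ is a small discrete set, and a case-by-case inspection in the five cases shows that the specific pair $(\lambda, \mu)$ is itself among the genuinely working pairs, completing the proof.

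The main obstacle is precisely this last step in the reverse direction. The hypotheses pin down $(\lambda_0, \mu_0)$ only up to a translation by $\ker(\hat\varphi)$ (with a balancing constraint from the second hypothesis), leaving in general a discrete one-parameter family of candidates --- for instance, in case Sph.A6, a family parametrised by $t \in \ZZ$ with $\lambda_0 = \lambda + t(\omega_1 - \omega_{n-1})$ and $\mu_0 = \mu - t(\omega_1 - \omega_{n-1})$, subject to dominance. The delicate step is to argue that the original $(\lambda, \mu)$ belongs to the subfamily of genuinely working pairs; this will be carried out by inspection in each of the five cases, using the explicit formulas for $\hat\varphi$ and $\bar\varphi$ together with the commutativity symmetry $\lambda \leftrightarrow \mu$, which accounts for the simplest instances of non-uniqueness.
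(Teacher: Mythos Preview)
Your forward direction is correct and essentially the same as the paper's.

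The reverse direction, however, has a genuine gap, and the route you choose is what creates it. You fix $\nu$ and extract a pair $(\lambda_0,\mu_0)$ with $\lambda_0\sim\lambda$, $\mu_0\sim\mu$ and $E_{G/K}(\nu)\subset E_{G/K}(\lambda_0)\cdot E_{G/K}(\mu_0)$; you then need $(\lambda_0,\mu_0)=(\lambda,\mu)$. But the information you have does not give this: the forward direction yields $E_{H/K}(\bar\nu)\subset E_{H/K}(\bar\lambda_0)\cdot E_{H/K}(\bar\mu_0)$, and the second hypothesis says $E_{H/K}(\bar\nu)\subset E_{H/K}(\bar\lambda)\cdot E_{H/K}(\bar\mu)$; having $E_{H/K}(\bar\nu)$ inside two different products in no way forces $(\bar\lambda_0,\bar\mu_0)=(\bar\lambda,\bar\mu)$, so Corollary~\ref{cor:injectivity} cannot be invoked. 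You acknowledge this and defer to a case-by-case inspection, but you do not carry it out, and the sketch you give (the one-parameter family in Sph.A6, the symmetry $\lambda\leftrightarrow\mu$) does not indicate how the ambiguity would actually be resolved.

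The paper avoids this difficulty by reversing the roles: it keeps $(\lambda,\mu)$ fixed and varies $\nu$ instead. Since $\rho$ is an algebra homomorphism, the second hypothesis gives
\[
E_{H/K}(\bar\nu)\ \subset\ E_{H/K}(\bar\lambda)\cdot E_{H/K}(\bar\mu)\ =\ \rho\big(E_{G/K}(\lambda)\cdot E_{G/K}(\mu)\big),
\]
so there exists $\nu'$ with $E_{G/K}(\nu')\subset E_{G/K}(\lambda)\cdot E_{G/K}(\mu)$ and $\bar\nu'=\bar\nu$. The commutative diagram (built from the first hypothesis) is then used to obtain $\nu'\sim\nu$, whence $\nu'=\nu$ by injectivity of $(\hat\varphi,\bar\varphi)$. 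Only one weight must be pinned down rather than two, and no case analysis is needed. The essential point you are missing is that the second hypothesis lifts \emph{through $\rho$ along the fixed pair $(\lambda,\mu)$}, which immediately produces a candidate $\nu'$ already satisfying the $G/K$-inclusion with the correct $(\lambda,\mu)$.
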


\begin{proof}
Suppose that $E_{G/K}(\nu) \subset E_{G/K}(\lambda) \cdot E_{G/K}(\mu)$. Looking at the associated $\widehat G$-modules generated in the coordinate ring $\CC[\widehat G/\widehat K]\ \simeq \CC[G/K]$ it follows that $E_{\widehat G/\widehat K}\big(\hat \nu\big)  \subset E_{\widehat G/\widehat K}\big(\hat \lambda\big) \cdot E_{\widehat G/\widehat K}\big(\hat \mu\big)$ as well. Similarly, looking at the images in $\CC[H/K]$ we see that $E_{H/K}(\bar \nu)  \subset E_{H/K}(\bar \lambda) \cdot E_{H/K}(\bar \mu)$.

Suppose conversely that both the inclusions
	$E_{\widehat G/\widehat K}(\hat \nu)  \subset E_{\widehat G/\widehat K}(\hat \lambda) \cdot E_{\widehat G/\widehat K}(\hat \mu)$ and $E_{H/K}(\bar \nu)  \subset E_{H/K}(\bar \lambda) \cdot E_{H/K}(\bar \mu)$ hold. 	By the first inclusion, the restriction of functions from $G/K$ to $H/K$ induces a commutative diagram as follows
\[\begin{tikzcd}        
 \bigoplus_{\nu' \sim \nu } E_{G/K}(\nu') 
\arrow[hook]{rr}\arrow[two heads]{d}    
&& 
 \big(\bigoplus_{\lambda' \sim \lambda} E_{G/K}(\lambda') \big)\cdot  \big(\bigoplus_{\mu' \sim \mu } E_{G/K}(\mu') \big)  
\arrow[two heads]{d}  
\\
      \bigoplus_{\nu' \sim \nu } E_{H/K}(\bar \nu') 
\arrow[hook]{rr} 
& &
\big(\bigoplus_{\lambda' \sim \lambda} E_{H/K}(\bar \lambda') \big)\cdot  \big(\bigoplus_{\mu' \sim \mu } E_{H/K}(\bar \mu') \big)\end{tikzcd}\]

	
	By the second inclusion, there are $\lambda' \sim \lambda$, $\mu'\sim \mu$ and $\nu' \sim \nu$ such that $\bar \lambda' = \bar \lambda$, $\bar \mu' = \bar \mu$, $\bar \nu' = \bar \nu$ and $E_{G/K}(\nu') \subset E_{G/K}(\lambda') \cdot E_{G/K}(\mu')$. On the other hand by Corollary \ref{cor:injectivity}, the restriction of $\bar \varphi$ to the fibers of $\hat \varphi$ is injective. Thus $\lambda' = \lambda$, $\mu' = \mu$, $\nu' = \nu$, and we get $E_{G/K}(\nu) \subset E_{G/K}(\lambda) \cdot E_{G/K}(\mu)$.	
\end{proof}

\begin{corollary}	\label{cor:decomposizione}
Let $(G,K)$ be one of the following spherical pairs of Table \ref{tab:sph_pairs_A}
$$ \mathrm{Sph.A6, \ Sph.A8, \ Sph.A10, \ Sph.A13, \ Sph.A14. }$$
If Conjecture \ref{conj:stanley_cor} holds true, then Conjecture \ref{conj:affine_spherical_A} holds true for $(G,K)$. In particular, Conjecture \ref{conj:affine_spherical_A} holds true in the cases
$$ \mathrm{Sph.A6, \ Sph.A10, \ Sph.A13, \ Sph.A14. }$$
\end{corollary}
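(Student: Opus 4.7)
The plan is to combine Theorem \ref{teo:decomposizione} with Conjecture \ref{conj:stanley_cor} applied to the two symmetric quotients from Proposition \ref{prop:isogeny}, and then to transfer the resulting equivalence back to $(G,K)$ through the isogeny of based root data $(\hat\varphi,\bar\varphi)\colon \calR_{G/K}\to \calR_{\widehat G/\widehat K}\oplus \calR_{H/K}$. For each of the five cases at issue, Theorem \ref{teo:decomposizione} already reduces the condition $E_{G/K}(\nu)\subset E_{G/K}(\lambda)\cdot E_{G/K}(\mu)$ to the conjunction of the analogous conditions on $\widehat G/\widehat K$ and on $H/K$.

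In each of the cases Sph.A6, Sph.A8, Sph.A10, Sph.A13 both $\widehat G/\widehat K$ and $H/K$ are symmetric with restricted root system of type $\sfA$ (possibly trivial, as for $H/K=\GL(n-1)/\SL(n-1)$ in Sph.A6), so under Conjecture \ref{conj:stanley_cor} each term of the conjunction is equivalent to a tensor-product inclusion together with a dominance condition. In case Sph.A14 the quotient $\widehat G/\widehat K$ is a product of two copies of the symmetric variety $\Spin(8)/\Spin(7)$, while the quotient $H/K=\Spin(7)/\sfG_2$ is unconditionally covered by Proposition \ref{prop:indec} (an instance of Sph.A12). The isogeny of based root data then induces an isogeny of the connected reductive groups $G_{\widehat G/\widehat K}\times G_{H/K}\to G_{G/K}$ under which every module $V_{G/K}(\pi)$ pulls back to the outer tensor product $V_{\widehat G/\widehat K}(\hat\pi)\boxtimes V_{H/K}(\bar\pi)$. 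As a consequence, $V_{G/K}(\nu)\subset V_{G/K}(\lambda)\otimes V_{G/K}(\mu)$ is equivalent to the two corresponding tensor-product inclusions on the factors; similarly, since $(\hat\varphi,\bar\varphi)$ restricts to a bijection $\Delta_{G/K}\to \Delta_{\widehat G/\widehat K}\sqcup \Delta_{H/K}$, the dominance condition $\lambda+\mu-\nu\in\NN\Delta_{G/K}^\rmn$ is equivalent to the analogous conditions on each factor, and piecing the two together one recovers Conjecture \ref{conj:affine_spherical_A} for $(G,K)$.

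The delicate point here is the compatibility of the isogeny with the $\rmn$-spherical roots, rather than just with the normalized spherical roots: in all cases other than Sph.A10 one has $\Delta_{G/K}=\Delta_{G/K}^\rmn$, and similarly on the quotients, so no further check is needed; in Sph.A10 the explicit formulas for $\hat\varphi$ and $\bar\varphi$ in Subsection \ref{ssec:decomposable cases} show that the doubled root $2\alpha_1\in\Delta_{G/K}^\rmn$ is sent by $\bar\varphi$ to the doubled root $2\alpha''_1\in\Delta_{H/K}^\rmn$, which provides the missing compatibility. For the ``in particular'' statement, one simply observes that in each of the cases Sph.A6, Sph.A10, Sph.A13, Sph.A14 both $\widehat G/\widehat K$ and $H/K$ have restricted root systems that are either trivial or direct sums of rank one subsystems of type $\sfA$; for such symmetric varieties Conjecture \ref{conj:stanley_cor} follows unconditionally from Stanley's Pieri rule for Jack symmetric functions, while the $\Spin(7)/\sfG_2$ factor appearing in Sph.A14 is again unconditionally handled by Proposition \ref{prop:indec}.
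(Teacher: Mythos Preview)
Your proof is correct and follows essentially the same approach as the paper: reduce via Theorem \ref{teo:decomposizione} to the two quotients $Y=\widehat G/\widehat K$ and $Z=H/K$, use the isogeny from Proposition \ref{prop:isogeny} to identify $V_X(\lambda)$ with $V_Y(\hat\lambda)\boxtimes V_Z(\bar\lambda)$ and hence split both the tensor-product condition and the dominance condition into the corresponding conditions on $Y$ and $Z$, and then invoke Conjecture \ref{conj:stanley_cor} (unconditionally via Stanley's Pieri rule when the restricted root systems are of rank one). You are in fact more careful than the paper on two points: you explicitly note that in Sph.A14 the factor $H/K=\Spin(7)/\sfG_2$ is not itself symmetric but is handled by Proposition \ref{prop:indec}, whereas the paper somewhat loosely speaks of ``direct products of symmetric varieties''; and you verify the compatibility of $(\hat\varphi,\bar\varphi)$ with the $\rmn$-spherical roots in the one case (Sph.A10) where $\Delta_{G/K}\neq\Delta_{G/K}^\rmn$, which is needed for the condition $\nu\leq_X\lambda+\mu$ but which the paper's proof leaves implicit.
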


\begin{proof}
%
Set $X = G/K$, $Y = \widehat G/\widehat K$ and $Z = H/K$. Then by Proposition \ref{prop:isogeny} we have an isogeny of reductive groups
$$
	G_Y \times G_Z \rightarrow G_X
$$
Notice that a $G_X$-module is irreducible if and only if it is irreducible as a $(G_Y \times G_Z)$-module under the previous map.

If $\lambda \in \Lambda_X^+$, regarding $V_X(\lambda)$ as a $(G_Y \times G_Z)$-module, by the definition of $\hat \lambda$ and $\bar \lambda$ we see that
$$
V_X(\lambda) \simeq V_Y \big(\hat \lambda\big) \boxtimes V_Z\big(\bar \lambda\big).
$$ 
In particular, given $\lambda, \mu , \nu \in \Lambda^+_X$, we have
$$
\begin{array}{ccc}
	V_X(\nu) \subset V_X(\lambda) \otimes V_X(\mu) \Longleftrightarrow
\left\{
\begin{array}{c}
	V_Y\big(\hat\nu\big) \subset V_Y \big(\hat\lambda\big) \otimes V_Y\big(\hat\mu\big) \phantom{\Big|} \\ 
	V_Z(\bar \nu) \subset V_Z(\bar \lambda) \otimes V_Z(\bar\mu) \end{array}\right.
\end{array}
$$
Thus by Theorem \ref{teo:decomposizione}, we see that Conjecture \ref{conj:affine_spherical_A} holds for $X$ if and only if it holds both for $Y$ and $Z$.

If both $Y$ and $Z$ are direct products of symmetric varieties with restricted root system of type $\sfA_1$, then by Stanley's Pieri rule for Jack symmetric functions we see that Conjecture \ref{conj:stanley_cor} holds true both for $Y$ and $Z$. Thus in this case the claim follows for $X$ as well.
\end{proof}

\begin{remark} \label{oss:sfera_transitiva_nonsemplice}
There is another case which can be put in the framewok outlined at the beginning of this subsection, even though $G$ is not simple. In this case
$$G/K = [\Sp(2) \times \Sp(2n)] / [\diag(\Sp(2)) \times \Sp(2n-2)]$$
and $\Phi_{G/K}$ is of type $\sfA_1 \times \sf A_1$, and Conjecture \ref{conj:affine_spherical_A} holds true thanks to the same arguments of Theorem \ref{teo:decomposizione} and Corollary \ref{cor:decomposizione}. The corresponding symmetric varieties are
$$
	\widehat G/\widehat K = \SO(4n)/\SO(4n-1)
$$
\begin{align*}
	H/K = [\Sp(2) \times \Sp(2) \times \Sp(2n-2)]/[\diag(\Sp(2)) \times \Sp(2n-2)]  & \\
	 \simeq [\Sp(2) \times \Sp(2) ]/\diag(\Sp(2))& 
\end{align*}
which have both restricted root system of type $\sfA_1$. The associated spherical roots and spherical weights are described in the tables here below, together with the maps $\hat \varphi$ and $\bar \varphi$.

$$
\begin{array}{|c|c|c|}
\hline
\phantom{\Big|} & \text{normalized spherical roots} & \text{fund. spherical weights}\\
\hline \hline
\phantom{\Big|} G/K & \alpha_0 + \alpha_1, \, \alpha_1 + 2(\alpha_2 + \ldots + \alpha_{n-1}) + \alpha_n & \omega_0+\omega_1, \ \omega_2 \\
\hline
\phantom{\Big|} \widehat G/\widehat K & 2(\alpha'_1 + \ldots + \alpha'_{2n-2}) + \alpha'_{2n-1} + \alpha'_{2n} & \omega_1' \\
\hline
\phantom{\Big|} H/K & \alpha_0'' + \alpha_1'' & \omega_0'' + \omega_1'' \\
\hline
\end{array}
$$

$$
\begin{array}{|ccccl|}
\hline
\phantom{\Big|} \hat \varphi & : & a (\omega_0+\omega_1) + b\omega_2 & \longmapsto & (a+2b)\omega'_1  \\
\hline
\phantom{\Big|} \bar \varphi & : & a (\omega_0+\omega_1) + b\omega_2 & \longmapsto  & a (\omega_0''+\omega_1'') \\
\hline
\end{array}
$$


$ $\\
\end{remark}

\subsection{Other cases with restricted root system of type $\sfA$.}
\label{ssec:other cases}

We now explain how the cases Sph.A7, Sph.A9, Sph.A11 of Table \ref{tab:sph_pairs_A} follow (almost immediately) from their companion cases Sph.A6, Sph.A8, Sph.A10.\\

\begin{itemize}
\item[\textbf{(Sph.A7)}] $G = \SL(n), \ K = \GL(n-1)$. This case is immediately reduced to Sph.A6. Denote indeed $K' = \SL(n-1)$: then $K = \rmN_G(K')$, and the spherical roots of $G/K$ are the same as those of $G/K'$. On the other hand $\Omega_{G/K} = \Lambda_{G/K'}$, and $V(\lambda)^{(K)} = V(\lambda)^{K'}$ for all $\lambda \in \Omega_{G/K}^+$, therefore $\mathbb C[G]^{(K)} = \mathbb C[G]^{K'}$. \\

\item[\textbf{(Sph.A9)}] $G = \SL(2n+1), \ K = \GL(1) \times \Sp(2n)$. This case is immediately reduced to Sph.A8. Consider indeed $K' = \Sp(2n)$: then $K = \rmN_G(K')$, and the spherical roots of $G/K$ are the same as those of $G/K'$. Moreover, as in the previous case, we have $\mathbb C[G]^{(K)} = \mathbb C[G]^{K'}$.\\

\item[\textbf{(Sph.A11)}] $G = \Sp(2n), \ K = \Sp(2) \times \Sp(2n-2)$. This case is immediately reduced to Sph.A10. Set indeed $K' = \GL(1) \times \Sp(2n-2)$: then $K' \subset K$, thus we have an inclusion $\mathbb C[G]^K \subset \mathbb C[G]^{K'}$. On the other hand the unique spherical root of $G/K$ is a spherical root of $G/K'$ as well, and if $\lambda, \mu \in \Lambda_{G/K}$, then $\mu \leq_{G/K} \lambda$ if and only if $\mu \leq_{G/K'} \lambda$.


\begin{table}[H]
\begin{tabular}{|c|c|c|}
\hline
$\phantom{\Big|}$ & \text{normalized spherical roots} & \text{fund. sph. weights}\\
\hline \hline
$\phantom{\Big|}G/K\phantom{\Big|}$ & $\alpha_1 + 2(\alpha_2 + \ldots + \alpha_{n-1}) + \alpha_n$ & $\omega_2$ \\
\hline
$\phantom{\Big|}G/K'\phantom{\Big|}$ & $\alpha_1, \ \alpha_1 + 2(\alpha_2 + \ldots + \alpha_{n-1}) + \alpha_n$ & $2\omega_1, \ \omega_2$ \\
\hline
\end{tabular}

\end{table}
\end{itemize}

As a consequence of the previous analysis, we get the following.

\begin{proposition}\label{prop:altri}
Let $(G,K)$ be one of the spherical pairs Sph.A7, Sph.A9, Sph.A11 of Table \ref{tab:sph_pairs_A}. 
If Conjecture \ref{conj:stanley_cor} holds true, then Conjecture \ref{conj:affine_spherical_A} holds true for $(G,K)$. In particular, Conjecture \ref{conj:affine_spherical_A} holds true for Sph.A7 and Sph.A11.
\end{proposition}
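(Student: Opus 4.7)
The plan is to reduce each of Sph.A7, Sph.A9, Sph.A11 to its companion case Sph.A6, Sph.A8, Sph.A10, by exploiting the relation between $K$ and the auxiliary subgroup $K'$ highlighted in the text just before the statement. In Sph.A7 and Sph.A9 one has $K = \rmN_G(K')$, while in Sph.A11 one has $K' \subset K$. In all three situations $\Lambda_{G/K}^+$ embeds as a submonoid of $\Lambda_{G/K'}^+$, and the identity $\CC[G]^{(K)} = \CC[G]^{K'}$ (in Sph.A7 and Sph.A9) together with the obvious inclusion $\CC[G]^{K} \subset \CC[G]^{K'}$ (in Sph.A11) realises $\CC[G/K]$ as a $G$-stable subspace of $\CC[G/K']$ whose isotypic components are precisely the $E_{G/K'}(\lambda)$ with $\lambda \in \Lambda^+_{G/K}$. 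Multiplication of spherical modules in $\CC[G/K]$ can thus be computed inside $\CC[G/K']$, so that for $\lambda,\mu,\nu \in \Lambda^+_{G/K}$,
\[E_{G/K}(\nu) \subset E_{G/K}(\lambda) \cdot E_{G/K}(\mu) \Longleftrightarrow E_{G/K'}(\nu) \subset E_{G/K'}(\lambda) \cdot E_{G/K'}(\mu).\]

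Next, I would compare the based root data $\calR_{G/K}$ and $\calR_{G/K'}$. The normalized spherical roots of $G/K$ coincide with those of $G/K'$ in Sph.A7 and Sph.A9, while in Sph.A11 the unique spherical root of $G/K$ is the second of the two spherical roots of $G/K'$ (the extra one, $\alpha_1$, being outside $\Lambda_{G/K}$, as noted in the text). A direct inspection of the explicit descriptions of the companion cases shows that for $\lambda,\mu \in \Lambda_{G/K}$ one has $\mu \leq_{G/K} \lambda$ if and only if $\mu \leq_{G/K'} \lambda$, and that the tensor product condition $V_{G/K}(\nu) \subset V_{G/K}(\lambda) \otimes V_{G/K}(\mu)$ reduces, for weights in $\Lambda_{G/K}^+$, to the corresponding condition for $G/K'$. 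Combined with the first step, this transfers Conjecture \ref{conj:affine_spherical_A} for the companion pair directly to $(G,K)$.

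The conditional statement then follows because the companion Sph.A8 is handled in Corollary \ref{cor:decomposizione} under Conjecture \ref{conj:stanley_cor}, hence the same hypothesis suffices for Sph.A9. The unconditional statement for Sph.A7 and Sph.A11 follows because their companions Sph.A6 and Sph.A10 have spherical root system a direct sum of rank one subsystems, so Corollary \ref{cor:decomposizione} provides an unconditional proof via Stanley's Pieri rule for Jack symmetric functions.

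The main obstacle I foresee is the case-by-case verification of the comparison of based root data in the second step: since $\Xi_{G/K}$ is in general a proper sublattice of $\Xi_{G/K'}$, the groups $G_{G/K}$ and $G_{G/K'}$ are not isomorphic, and it is not automatic that the decomposition of a tensor product of irreducibles for $G_{G/K}$ matches the decomposition of the corresponding tensor product for $G_{G/K'}$ once one restricts to weights in $\Lambda_{G/K}^+$. One would exploit the rank one (respectively rank two) nature of these root data to carry out the check by hand, case by case.
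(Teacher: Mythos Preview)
Your proposal is correct and follows essentially the same approach as the paper. The paper's proof consists only of the sentence ``As a consequence of the previous analysis, we get the following'', so the bullets preceding the statement \emph{are} the proof, and you have correctly unpacked their content: the inclusion $\CC[G]^K \subset \CC[G]^{K'}$ identifies $E_{G/K}(\lambda)$ with $E_{G/K'}(\lambda)$ for $\lambda \in \Lambda_{G/K}^+$, so the multiplicative question transfers verbatim to $G/K'$; the partial orders $\leq_{G/K}$ and $\leq_{G/K'}$ agree on $\Lambda_{G/K}$ because the extra spherical root of $G/K'$ (when present) is linearly independent from those of $G/K$; and the tensor product condition transfers because the restriction of $\calR_{G/K'}$ to the sublattice $\Xi_{G/K}$ recovers $\calR_{G/K}$ up to a central torus that acts trivially on the relevant weights.

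The obstacle you flag is genuine but mild. In Sph.A7 and Sph.A11 the root datum $\calR_{G/K}$ is of type $\sfA_1$ (adjoint), and the weights $\lambda \in \Lambda_{G/K}^+$ lie in the kernel of the extra coroot of $\calR_{G/K'}$ (the $\GL(1)$-direction in Sph.A6, respectively the $\alpha_1$-direction in Sph.A10), so the comparison is an immediate Clebsch--Gordan check. In Sph.A9 the same mechanism works: via the isogeny of Proposition~\ref{prop:isogeny}, weights of $\Lambda_{G/K}^+$ have trivial component on the central torus $H/K' = \GL(2n)/\Sp(2n)$'s $\GL(1)$-factor, and the remaining semisimple factors of $G_{G/K'}$ already realise $G_{G/K}$.
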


\section{A conjectural rule in the case of spherical Levi subgroups}
\label{sec:Levi}

There is another case where we can apply the techniques of Section \ref{sec:casi_tipo_A}, and find two morphisms which decompose the root system of an affine spherical variety into those of two symmetric varieties. However in this case the root system generated by the spherical roots is of type $\sfB$, and as far as we know there is no rule (not even conjectural) to multiply spherical functions on a symmetric variety in this case.

\subsection{The spherical pair $(\SO(2p+1),\GL(p))$.} \label{ssec:modelloB}
Set $G = \SO(2p+1)$ and $K = \GL(p)$ with $p\geq3$. Then the homogeneous space $G/K$ is spherical, and it is a model space for $G$ (see \cite{GZ1}, \cite{GZ2}, \cite{Lu2}).

The spherical root system $\Phi_{G/K}$ is of type $\sfB_{\lfloor p/2\rfloor} \times \sfB_{\lceil p/2\rceil}$ if $p\geq4$, and of type $\sfA_1\times\sfB_2$ if $p=3$. In this case as well, it is possible  to define reductive groups $H \subset G \subset \widehat G$ and two symmetric varieties $H/K$ and $\widehat G/\widehat K$ endowed with equivariant morphisms
$$
	H/K \hookrightarrow G/K \stackrel{\sim}{\longrightarrow} \widehat G/\widehat K
$$
which produce an isogeny of root data such as in Proposition \ref{prop:isogeny}. This will allow to extend Theorem \ref{teo:decomposizione} to this case as well, and to reduce the multiplication of the spherical functions on $G/K$ to the case of two symmetric varieties.

More precisely, define
$$\widehat G = \SO(2p+2), \quad \widehat K = \GL(p+1), \quad H = \SO(2p).$$
The root systems $\Phi_{\widehat G/\widehat K}$ and $\Phi_{H/K}$ are of type $\sfB$, whereas the restricted root systems $\widetilde \Phi_{\widehat G/\widehat K}$ and $\widetilde \Phi_{H/K}$ are either of tye $\sfB\sfC$ or of type $\sfC$, depending on the parity of $p$.
 
As in Section \ref{sec:casi_tipo_A}, the equivariant morphisms $H/K \hookrightarrow G/K \stackrel{\sim}{\to} \widehat G/\widehat K$ induce linear maps of the respective weight monoids
$$
	\hat \varphi : \Lambda^+_{G/K} \longrightarrow \Lambda^+_{\widehat G/\widehat K} , \qquad \qquad
	\bar \varphi : \Lambda^+_{G/K} \longrightarrow \Lambda^+_{H/K} .
$$
For $\lambda \in \Lambda_{G/K}^+$, we set $\hat \lambda = \hat \varphi(\lambda)$ and $\bar \lambda = \bar \varphi(\lambda)$. 

The spherical roots and the weight lattices, as well as the maps $\hat \varphi$ and $\bar \varphi$, are described in the tables below. We separate the two cases $p$ even and $p$ odd.

\bigskip
\noindent\textbf{($p=2n$ even)}
%
%
$$
\begin{array}{|c|c|c|}
\hline
\phantom{\Big|} & \text{normalized spherical roots} & \text{fund. sph. weights}\\
\hline \hline
\phantom{\Big|}G/K \phantom{\Big|} & 
\begin{array}{l}
\alpha_i+\alpha_{i+1} \ (i=1, \ldots, 2n-1),\ \alpha_{2n} 
\end{array} 
& 
\begin{array}{l}\omega_i\ (i=1, \ldots ,2n-1),\\ 2\omega_{2n} \end{array}\\
\hline
\phantom{\Big|}\widehat G / \widehat K\phantom{\Big|} & 
\begin{array}{l}
\alpha'_{2i-1} + 2\alpha'_{2i}+\alpha'_{2i+1}\ (i=1, \ldots, n-1) , \\
\alpha'_{2n-1} + \alpha'_{2n} + \alpha'_{2n+1} 
\end{array}
 & \begin{array}{l}\omega'_{2i}\ (i=1, \ldots ,n-1), \\ \omega'_{2n} + \omega'_{2n+1} \end{array}\\
\hline
\phantom{\Big|}H/K\phantom{\Big|} & 
\begin{array}{l}
\alpha''_{2i-1} + 2\alpha''_{2i}+\alpha''_{2i+1} \ (i=1, \ldots, n-1),  \\
\alpha''_{2n}  
\end{array}
 & \begin{array}{l}\omega''_{2i}\ (i=1, \ldots ,n-1), \\ 2\omega''_{2n} \end{array}\\
\hline
\end{array}
$$

$$
\begin{array}{|ccccl|}
\hline
\phantom{\Big|}\hat \varphi & : & \sum_{i=1}^{2n} a_i \omega_i & \longmapsto & \big(\sum_{i=1}^{n-1} (a_{2i-1} + a_{2i}) \omega'_{2i}\big) + (a_{2n-1}+\frac{a_{2n}}{2}) (\omega'_{2n} + \omega'_{2n+1})\\
\hline
\phantom{\Big|}\bar \varphi & : & \sum_{i=1}^{2n} a_i \omega_i & \longmapsto &  \big(\sum_{i=1}^{n-1} (a_{2i}+a_{2i+1}) \omega''_{2i}\big) + a_{2n} \omega''_{2n}\\
\hline
 \end{array}
$$

\bigskip
\noindent \textbf{($p=2n+1$ odd)}
%
%
$$
\begin{array}{|c|c|c|}
\hline
\phantom{\Big|} & \text{normalized spherical roots} & \text{fund. sph. weights}\\
\hline \hline
\phantom{\Big|}G/K \phantom{\Big|} & 
\alpha_i+\alpha_{i+1} \ (i=1, \ldots, 2n),\  \alpha_{2n+1} 
& \begin{array}{l}\omega_i \ (i=1,\ldots ,2n), \\ 2\omega_{2n+1}\end{array} \\
\hline
\phantom{\Big|}\widehat G / \widehat K\phantom{\Big|} & 
\begin{array}{l}
\alpha'_{2i-1} + 2\alpha'_{2i}+\alpha'_{2i+1} \ (i=1, \ldots, n), \\
\alpha'_{2n+2}  
\end{array}
 & \begin{array}{l}\omega'_{2i} (i=1, \ldots, n), \\ 2\omega'_{2n+2}\end{array} \\
\hline
\phantom{\Big|}H/K\phantom{\Big|} & 
\begin{array}{l}
\alpha''_{2i-1} + 2\alpha''_{2i}+\alpha''_{2i+1} \ (i=1, \ldots, n-1), \\
\alpha''_{2n-1} + \alpha''_{2n} + \alpha''_{2n+1}  
\end{array}
 & \begin{array}{l}\omega''_{2i} (i=1, \ldots , n-1), \\ \omega''_{2n} + \omega''_{2n+1}\end{array} \\
\hline
\end{array}
$$

$$
\begin{array}{|ccccl|}
\hline
\phantom{\Big|}\hat \varphi & : & \sum_{i=1}^{2n+1} a_i \omega_i & \longmapsto & \big(\sum_{i=1}^n (a_{2i-1} + a_{2i})  \omega'_{2i}\big) + a_{2n+1} \omega'_{2n+2}\\
\hline
\phantom{\Big|}\bar \varphi & : & \sum_{i=1}^{2n+1} a_i \omega_i & \longmapsto & \big(\sum_{i=1}^{n-1} (a_{2i}+a_{2i+1}) \omega''_{2i}\big) + (a_{2n}+\frac{a_{2n+1}}{2}) (\omega''_{2n} + \omega''_{2n+1})\\
\hline
\end{array}
$$

\bigskip
Therefore, in this case as well we obtain the similar results as those in Proposition~\ref{prop:isogeny} and Theorem~\ref{teo:decomposizione}.

%
%

\begin{theorem} \label{teo:isogeny-dec}
Let $(G,K)$ be the spherical pair $(\SO(2p+1),\GL(p))$, then with the above definitions:
	\begin{itemize}
	\item[i)] The map
$$
	(\hat \varphi, \bar \varphi) : \Xi_{G/K} \longrightarrow \Xi_{\widehat G/\widehat K} \oplus  \Xi_{H/K} 
$$
is an isogeny of based root data
\[
\calR_{G/K} \longrightarrow \calR_{\widehat G/\widehat K} \oplus  \calR_{H/K} .
\]
	\item[ii)] For all $\lambda,\mu,\nu\in\Lambda^+_{G/K}$ we have
$$
\begin{array}{ccc}
E_{G/K}(\nu) \subset E_{G/K}(\lambda) \cdot E_{G/K}(\mu) &
\Longleftrightarrow &
\left\{
\begin{array}{c}
	E_{\widehat G/\widehat K}(\hat \nu) \subset E_{\widehat G/\widehat K}(\hat \lambda) \cdot E_{\widehat G/\widehat K}(\hat \mu) \phantom{\Big|} \\ 
	E_{H/K}(\bar \nu) \subset E_{H/K}(\bar \lambda) \cdot E_{H/K}(\bar \mu) \end{array}\right.
\end{array}
$$
\end{itemize}
\end{theorem}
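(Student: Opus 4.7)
The plan is to replicate, in the setting where $\Phi_{G/K}$ is of type $\sfB$, the two-step strategy already carried out in Proposition~\ref{prop:isogeny} and Theorem~\ref{teo:decomposizione} for the type $\sfA$ cases. The construction of $\widehat G \supset G$ with $\widehat K$ symmetric, together with the reductive subgroup $H \subset G$ containing $K$, provides a $G$-equivariant isomorphism $G/K \simeq \widehat G/\widehat K$ and an $H$-equivariant embedding $H/K \hookrightarrow G/K$. As in Section~\ref{sec:casi_tipo_A}, these yield additive maps $\hat\varphi : \Lambda^+_{G/K} \to \Lambda^+_{\widehat G/\widehat K}$ and $\bar\varphi : \Lambda^+_{G/K} \to \Lambda^+_{H/K}$ constructed through the branching monoids $\Gamma(\widehat G, G)$ and $\Gamma(G,H)$. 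The tables supplied before the statement of the theorem give their explicit form on the fundamental spherical weights, separately for $p=2n$ and $p=2n+1$.

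For part~(i), I would first extend $\hat\varphi$ and $\bar\varphi$ to the weight lattices and verify that they take values in $\Xi_{\widehat G/\widehat K}$ and $\Xi_{H/K}$ respectively; a direct inspection of the tables (including the half-spherical roots contributing to $\Xi$) makes this routine. One then checks the three axioms for an isogeny of based root data: (a) the combined map $(\hat\varphi,\bar\varphi)$ is injective with finite cokernel, which follows from comparing ranks ($p = \lfloor p/2\rfloor + \lceil p/2\rceil$) and observing on the tables that the image spans a sublattice of full rank; (b) it sends $\Delta_{G/K}$ bijectively onto $\Delta_{\widehat G/\widehat K} \sqcup \Delta_{H/K}$, which is again read off the tables -- for instance, in the case $p=2n$ the root $\alpha_i+\alpha_{i+1}$ with $i$ odd and $i<2n-1$ maps to $\alpha'_{i}+2\alpha'_{i+1}+\alpha'_{i+2} \in \Delta_{\widehat G/\widehat K}$ and to $0$ in $\Xi_{H/K}$, whereas for $i$ even it maps to $0$ on the $\widehat G/\widehat K$ side and to a root on the $H/K$ side, and the final root $\alpha_{2n}$ is handled by using $2\omega_{2n} \in \Lambda^+_{G/K}$; (c) the coroot condition $\varphi^\vee(\varphi(\sigma)^\vee) = \sigma^\vee$, which is the content of the pairing computation on the lattices $\Xi$, using the case analysis of Lemma~\ref{lemma:coradici sferiche} for roots of $\sfA_3$ shape like $\alpha_{2n-1}+\alpha_{2n}+\alpha_{2n+1}$ and for doubled simple roots like $2\alpha_{2n+2}$.

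For part~(ii), the forward implication is immediate. If $E_{G/K}(\nu) \subset E_{G/K}(\lambda)\cdot E_{G/K}(\mu)$, then under the $G$-equivariant isomorphism $\CC[G/K] \simeq \CC[\widehat G/\widehat K]$ one obtains $E_{\widehat G/\widehat K}(\hat\nu) \subset E_{\widehat G/\widehat K}(\hat\lambda)\cdot E_{\widehat G/\widehat K}(\hat\mu)$, and applying the restriction $\CC[G/K] \to \CC[H/K]$ gives the analogous inclusion on the $H/K$ side. For the converse I would reproduce verbatim the argument of Theorem~\ref{teo:decomposizione}: writing $E_{\widehat G/\widehat K}(\hat\lambda) = \bigoplus_{\lambda' \sim \lambda} E_{G/K}(\lambda')$ and similarly for $\hat\mu,\hat\nu$, form the commutative square whose top row is the containment inside $\CC[\widehat G/\widehat K] \simeq \CC[G/K]$ supplied by the first hypothesis and whose bottom row is its image under restriction to $\CC[H/K]$. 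The second hypothesis produces a triple $\lambda'\sim\lambda$, $\mu'\sim\mu$, $\nu'\sim\nu$ with matching $\bar\varphi$-images and an inclusion $E_{G/K}(\nu') \subset E_{G/K}(\lambda')\cdot E_{G/K}(\mu')$; the injectivity of $(\hat\varphi,\bar\varphi)$ from part~(i), which is the exact analog of Corollary~\ref{cor:injectivity}, forces $\lambda'=\lambda$, $\mu'=\mu$, $\nu'=\nu$ and yields the desired inclusion.

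The main obstacle is the verification of part~(i), which is essentially a bookkeeping problem but requires handling two parities of $p$ and several shapes of spherical roots appearing in Lemma~\ref{lemma:coradici sferiche}; in particular the coroot normalization for roots of shape $\alpha+\alpha'+\alpha''$ (sum of a long root and two strongly orthogonal short roots in $\sfD$-type support) and for doubled simple roots needs to be tracked carefully, so that the dual map indeed matches the coroots of the normalized spherical roots of $\widehat G/\widehat K$ and $H/K$. Once part~(i) is in place, part~(ii) is a formal consequence of the diagram chase, identical in structure to the proof of Theorem~\ref{teo:decomposizione}.
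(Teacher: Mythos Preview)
Your proposal is correct and follows exactly the approach indicated in the paper: the paper does not give a separate proof of this theorem but states that ``in this case as well we obtain the similar results as those in Proposition~\ref{prop:isogeny} and Theorem~\ref{teo:decomposizione}'', relying on the explicit tables for $\hat\varphi$ and $\bar\varphi$ to check the isogeny axioms (part~i) and on the identical diagram chase of Theorem~\ref{teo:decomposizione} combined with the injectivity of $(\hat\varphi,\bar\varphi)$ (part~ii). Your outline spells out precisely these verifications.
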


The symmetric variety $\SO(2n)/\GL(n)$ however is not covered by Conjecture \ref{conj:stanley_cor}. This prompts us to wonder how the conjecture should be rephrased in this case, and more generally in the case of the symmetric varieties of Hermitian type (that is, the symmetric varieties $G/K$ with $K$ a Levi subgroup). 

Notice that if $(G,K)$ is a spherical pair with $K$ a Levi subgroup of $G$, then either $G/K$ is a symmetric space of Hermitian type, or it is the pair $(\SO(2p+1),\GL(p))$ considered above or it is the pair $(\Sp(2p),\GL(1) \times \Sp(2p-2))$ which appeared as case Sph.A10 in Section \ref{ssec:decomposable cases}. In particular, the analysis in this section reduces the case of the spherical Levi subgroups to the case of the Hermitian symmetric subgroups.


\subsection{The Hermitian symmetric case.}
Let $X = G/K$ be a symmetric variety of Hermitian type, with $G$ simple and simply connected. This means that the center of $K$ has positive dimension, or equivalently that $K$ is a Levi subgroup of $G$. In this case the restricted root system $\widetilde \Phi_X$ is either of type $\sfC$ or $\sfBC$, whereas the root system $\Phi_X$ is always of type $\sfB$.

For completeness, we list in Table \ref{tab:sym_hermitian} all the symmetric varieties of Hermitian type. In all cases, we also give the simple restricted roots and a minimal set of generators for the weight monoid $\Lambda^+_X$.  

\begin{table}
\caption{Symmetric varieties of Hermitian type}\label{tab:sym_hermitian}
\begin{tabular}{|c|c|c|c|c|c|c|c|}
\hline 
& $\phantom{\bigg|}$ $G$ & $K$ & $\widetilde \Phi_{G/K}$ & $\Phi_{G/K}$ \\
    \hline    \hline
He.1a & $\phantom{\Big|}$ $\SL(p+q)$, $q>p\geq 1$  & $\mathrm S(\GL(p) \times \GL(q))$ & $\sfBC_p$ & $\sfB_p$ \\
    \hline
He.1b & $\phantom{\Big|}$ $\SL(2p)$, $p\geq 1$ &  $\mathrm S(\GL(p) \times \GL(p))$ & $\sfC_p$ & $\sfB_p$ \\
    \hline
He.2 & $\phantom{\Big|}$ $\Spin(n)$, $n\geq5$ &  $\Spin(2) \times \Spin(n-2)$ & $\sfC_2$ & $\sfB_2$ \\
    \hline 
He.3 & $\phantom{\Big|}$ $\Sp(2p)$, $p\geq3$  & $\GL(p)$ & $\sfC_p$ & $\sfB_p$ \\
    \hline 
He.4a & $\phantom{\Big|}$ $\Spin(4p+2)$, $p\geq2$  & $\GL(2p+1)$ & $\sfBC_p$ & $\sfB_p$ \\
\hline
He.4b & $\phantom{\Big|}$ $\Spin(4p)$, $p\geq3$ &  $\GL(2p)$ & $\phantom{p}\sfC_p$ & $\sfB_p$  \\
\hline
He.5 & $\phantom{\Big|}$ $\sfE_6$ & $\sfD_5\times \CC^*$ & $\sfBC_2$ & $\sfB_2$ \\
\hline
He.6 & $\phantom{\Big|}$ $\sfE_7$ &  $\sfE_6 \times \CC^*$ & $\sfC_3$ & $\sfB_3$ \\
\hline
\end{tabular}
\end{table}


\begin{itemize}
	\item[i)] $\mathbf{SL(p+q)/\mathrm S( GL(p) \times GL(q))}$, $q > p \geq 1$: $\Delta_X = \{\alpha_i + \alpha_{p+q-i}\ :\ 1\leq i < p \}\cup\{\alpha_p + \ldots + \alpha_q \}$, $\Lambda^+_X$ is generated by the weights $\omega_i + \omega_{p+q-i}$ with $1\leq i\leq p$;
	\item[ii)] $\mathbf{SL(2p)/\mathrm S(GL(p) \times GL(p))}$, $p \geq 1$: $\Delta_X = \{\alpha_i + \alpha_{2p-i}\ :\ 1\leq i < p\}\cup\{\alpha_p\}$, $\Lambda^+_X$ is generated by the weights $\omega_i + \omega_{2p-i}$ with $1\leq i < p$, toghether with $2 \omega_p$;
	\item[iii)] $\mathbf{Spin(2p+1)/Spin(2) \times Spin(2p-1)}$, $p \geq 2$: $\Delta_X = \{\alpha_1, \; 2 (\alpha_2 + \ldots + \alpha_p)\}$, $\Lambda^+_X$ is generated by $2\omega_1$ and $\omega_2$;
	\item[iv)] $\mathbf{Spin(2p)/Spin(2) \times Spin(2p-2)}$, $p \geq 3$: $\Delta_X = \{\alpha_1, \; 2 (\alpha_2 + \ldots + \alpha_{p-2}) + \alpha_{p-1}+\alpha_p \}$, $\Lambda^+_X$ is generated by $2\omega_1$ and $\omega_2$;
	\item[v)] $\mathbf{Sp(2p)/GL(p)}$, $p \geq 2$: $\Delta_X =\{2\alpha_i\ :\ 1\leq i\leq p-1\}\cup\{\alpha_p\}$, $\Lambda^+_X$ is generated by the weights $2\omega_i$ with $1 \leq i \leq p$;
	\item[vi)] $\mathbf{Spin(4p+2)/GL(2p+1)}$, $p \geq 1$:  $\Delta_X =\{\alpha_{2i-1}+2\alpha_{2i}+\alpha_{2i+1}\ :\ 1\leq i < p\}\cup\{\alpha_{2p-1} + \alpha_{2p} + \alpha_{2p+1} \}$, $\Lambda^+_X$ is generated by the weights $\omega_{2i}$ with $1 \leq i < p$ together with $\omega_{2p} + \omega_{2p+1}$;
	\item[vii)] $\mathbf{Spin(4p)/GL(2p)}$, $p \geq 2$:  $\Delta_X =\{\alpha_{2i-1}+2\alpha_{2i}+\alpha_{2i+1}\ :\ 1\leq i\leq p-1\}\cup\{ \alpha_{2p} \}$, $\Lambda^+_X$ is generated by the weights $\omega_{2i}$ with $1 \leq i < p$, toghether with $2 \omega_{2p}$ ;
	\item[viii)] $\mathbf{E_6/D_5 \times \CC^*}$: $\Delta_X = \{\alpha_1 + \alpha_3 +\alpha_4+\alpha_5 + \alpha_6, \; 2\alpha_2 +  \alpha_3 + 2\alpha_4 + \alpha_5 \}$, $\Lambda^+_X$ is generated by $\omega_1+\omega_6$ and $\omega_2$;
	\item[ix)] $\mathbf{E_7/E_6\times \CC^*}$: $\Delta_X = \{2\alpha_1 + \alpha_2 + 2\alpha_3 +2\alpha_4 + \alpha_5, \; \alpha_2 +  \alpha_3 + 2(\alpha_4 + \alpha_5 + \alpha_6), \; \alpha_7\}$, $\Lambda^+_X$ is generated by $\omega_1$, $\omega_6$ and $2\omega_7$.
	\end{itemize}

If $\widetilde \Phi_X$ is non-reduced, then $K$ is self-normalizing, and the derived subgroup $K'$ is also spherical in $G$. If instead $\widetilde \Phi_X$ is reduced, then $K$ is not self-normalizing and its derived subgroup is not spherical in $G$. Therefore, there are some Hermitian symmetric subgroups which have a non-symmetric semisimple spherical counterpart, similarly to the cases Sph.A7 and Sph.A9 of Table \ref{tab:sph_pairs_A}.

We list such spherical pairs in Table \ref{tab:hermitiani derivati sferici}. In all cases, we also give the simple restricted roots and a minimal set of generators for the weight monoid $\Lambda^+_X$.  

\begin{table}[H]
\caption{Non-symmetric reductive spherical pairs $(G,K)$ with $G/\rmN_G(K)$ symmetric of Hermitian type}
\label{tab:hermitiani derivati sferici}
\begin{tabular}{|c|c|c|c|c|c|c|c|}
\hline 
& $\phantom{\bigg|}$ $G$  & $K$ & $\Phi_{G/K}$ \\
    \hline    \hline
He.1a' & $\phantom{\Big|}$ $\SL(p+q)$, $q>p\geq 1$  & $\mathrm \SL(p) \times \SL(q)$ & $\sfB_p$ \\
    \hline
He.4a' & $\phantom{\Big|}$ $\Spin(4p+2),$ $p\geq 1$&  $\SL(2p+1)$ & $\sfB_p$ \\
\hline
He.5' & $\phantom{\Big|}$ $\sfE_6$ & $\sfD_5$ & $\sfB_2$ \\
\hline
\end{tabular}
\end{table}

\begin{itemize}
	\item[i')] $\mathbf{SL(p+q)/SL(p) \times SL(q))}$, $q > p \geq 1$: $\Delta_X = \{\alpha_i + \alpha_{p+q-i}\ :\ 1\leq i < p \}\cup\{\alpha_p + \ldots + \alpha_q \}$, $\Lambda^+_X$ is generated by the weights $\omega_i + \omega_{p+q-i}$ with $1\leq i < p$ together with $\omega_p$ and $\omega_q$;
	\item[vi')] $\mathbf{Spin(4p+2)/SL(2p+1)}$, $p \geq 1$:  $\Delta_X =\{\alpha_{2i-1}+2\alpha_{2i}+\alpha_{2i+1}\ :\ 1\leq i < p\}\cup\{\alpha_{2p-1} + \alpha_{2p} + \alpha_{2p+1} \}$, $\Lambda^+_X$ is generated by the weights $\omega_{2i}$ with $1 \leq i < p$ together with $\omega_{2p}$ and $\omega_{2p+1}$;
	\item[viii')] $\mathbf{E_6/D_5}$: $\Delta_X = \{\alpha_1 + \alpha_3 +\alpha_4+\alpha_5 + \alpha_6, \; 2\alpha_2 +  \alpha_3 + 2\alpha_4 + \alpha_5 \}$, $\Lambda^+_X$ is generated by $\omega_1$, $\omega_2$ and $\omega_6$.
\end{itemize}

If $(G,K')$ is a spherical pair appearing in Table \ref{tab:hermitiani derivati sferici} and $K = \rmN_G(K')$ is the corresponding symmetric subgroup of Hermitian type, then we have $\Omega_{G/K}^+ =\Lambda_{G/K'}^+$ and $\CC[G]^{(K)} = \CC[G]^{K'}$, as in the cases Sph.A7 and Sph.A9. Thus the problem of decomposing the product of two spherical functions on $G/K$ reduces to the analogous problem on $G/K'$.

%
%

In the same spirit of Conjectures \ref{conj:stanley_cor} and \ref{conj:affine_spherical_A}, we formulate the following for the multiplication of spherical functions in terms of the tensor product of the restricted root system, which by computational experiments seems to hold for all the Hermitian symmetric pairs but in case He.1a. 

\begin{conjecture}\label{conj:hermitiano}
Let $(G,K)$ be a reductive spherical pair with $G$ simple and $K$ a Levi subgroup of $G$ different from cases He.1a. For all $\lambda, \mu, \nu \in \Lambda_X^+$, it holds
$$
E_{G/K}(\nu) \subset E_{G/K}(\lambda) \cdot E_{G/K}(\mu) \Longleftrightarrow
\left\{ \begin{array}{c}
V_X(\nu) \subset V_X(\lambda) \otimes V_X(\mu)  \phantom{\Big|} \\
\nu \leq_X \lambda + \mu
\end{array} \right.
$$
\end{conjecture}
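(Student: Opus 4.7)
The plan is to extend the Jacobi-polynomial approach of Section \ref{sec:sym} to the restricted root systems $\widetilde \Phi_X$ of type $\sfC$ and $\sfBC$ attached to a Hermitian symmetric variety $X = G/K$. By Richardson's theorem and the identification of $\CC[X]^K$ with $\CC[A_X]^{W_X}$, the restriction of the spherical function $f_\lambda$ to $A_X$ is (up to a scalar) the specialised Heckman-Opdam polynomial $P_\lambda^{(m/2)}$ for $\widetilde \Phi_X$, so the decomposition of $E_X(\lambda) \cdot E_X(\mu)$ is controlled by the structure constants of $P_\lambda^{(m/2)} P_\mu^{(m/2)}$ in the Jacobi basis. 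Conjecture \ref{conj:hermitiano} then amounts to the statement that these constants are non-zero exactly when $V_X(\nu) \subset V_X(\lambda) \otimes V_X(\mu)$ and $\nu \leq_X \lambda+\mu$, with the normalisation $V_X$ coming from the root datum $\calR_X$ of Proposition \ref{prop:root_datum}.

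First I would verify the rank-two cases He.2 and He.5 by direct computation, using that $\Lambda_X^+$ has only two generators: this reduces the full multiplication table to a small Pieri-type rule together with explicit tensor-product decompositions for $G_X$ of type $\sfB_2$. For the higher-rank cases He.3, He.4a, He.4b, He.6 I would then argue by induction on the rank, using Pieri-type rules for Koornwinder and Macdonald polynomials of types $\sfC$ and $\sfBC$ (due to van Diejen, Okounkov, Lassalle and others) to control multiplication by a fundamental $P_{\omega_i}^{(m/2)}$ at the relevant specialisation of the parameter.

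An alternative route, closer to the spirit of Section \ref{sec:casi_tipo_A}, is to look for isogeny-type decompositions of each higher-rank Hermitian pair. One would seek a reductive overgroup $\widehat G \supset G$ and a spherical subgroup $\widehat K \subset \widehat G$ with $G/K \simeq \widehat G/\widehat K$, together with a reductive intermediate subgroup $K \subset H \subset G$, so as to produce an isogeny $\calR_X \to \calR_{\widehat G/\widehat K} \oplus \calR_{H/K}$ as in Proposition \ref{prop:isogeny}, with both factors of strictly smaller rank. Iterating should reduce the whole conjecture to the rank-two cases (where the first step applies), via the analogue of Theorem \ref{teo:decomposizione}. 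In parallel, the case of $(\SO(2p+1),\GL(p))$ considered in Section \ref{ssec:modelloB} is already reduced to two Hermitian factors by Theorem \ref{teo:isogeny-dec}, and Sph.A10 is covered by Conjecture \ref{conj:affine_spherical_A}, so the conjecture for all spherical Levi subgroups would follow.

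The hard part will be establishing non-negativity of the structure constants $a_{\lambda,\mu}^\nu$ at the specific multiplicity parameter $m/2$ attached to $X$: the analogue of Stanley's conjecture is unknown in types $\sfC$ and $\sfBC$, and the specialisation at $m/2$ may sit in a regime where naive positivity fails unless one passes to the modified based root datum $\calR_X$. In particular, any proof will have to pinpoint why case He.1a must be excluded; I expect the obstruction to come from the unbalance between the multiplicities of the long and short restricted roots (governed by the integer $q-p > 0$ in He.1a), which produces counterexamples that disappear when $q=p$ (He.1b) or when the group-theoretic setup provides additional rigidity (He.4a, He.5).
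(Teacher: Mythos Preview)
This statement is a \emph{conjecture} in the paper, not a theorem: the authors do not prove it. They formulate it ``by computational experiments'' and only establish one reduction, namely that the non-symmetric spherical Levi cases $(\SO(2p+1),\GL(p))$ and $(\Sp(2n),\GL(1)\times\Sp(2n-2))$ reduce, via Theorem~\ref{teo:isogeny-dec} and Corollary~\ref{cor:decomposizione}, to the Hermitian symmetric case. The Hermitian symmetric case itself is left open. So there is no ``paper's own proof'' to compare your proposal against; what you have written is a research outline for an open problem.

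On the substance of that outline: your second route, seeking isogeny-type decompositions of each higher-rank Hermitian pair into factors of strictly smaller rank, will not go through. The factorizations $\widehat G = G\cdot \widehat K$ used in Section~\ref{sec:casi_tipo_A} come from the Onishchik/Liebeck--Saxl--Seitz classification, which is a short finite list; there is no such factorization available for most Hermitian symmetric pairs (e.g.\ $\Sp(2p)/\GL(p)$ or $\sfE_7/\sfE_6\times\CC^*$), so the inductive scheme has no fuel. Your first route is more honest: it correctly identifies that everything hinges on an analogue of Stanley's positivity conjecture for Heckman--Opdam polynomials in types $\sfC/\sfBC$, which is genuinely unknown. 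Note also that your diagnosis of why He.1a is excluded does not match the paper's evidence: Example~\ref{ex:counterexample} shows that the naive rule (without the passage to $\calR_X$) already fails for He.4a and He.5, and He.1a is singled out because even the refined rule with $\calR_X$ appears to fail there, not because of the multiplicity imbalance per se.
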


By Theorem \ref{teo:isogeny-dec} and Corollary \ref{cor:decomposizione}, the previous conjecture immediately reduces to the case where $G/K$ is a symmetric variety of Hermitian type.


\begin{remark}
 In \cite[Section 9]{BGM} the following question was considered. Suppose that $G$ is simply laced and let $X = G/K$ be a spherical variety with $\Delta^\rmsc_X \cap \Delta= \varnothing$. Let $\lambda, \mu, \nu \in \Lambda_X^+$, is it true that $E_X(\nu) \subset E_X(\lambda) \cdot E_X(\mu)$ if and only if $V(\nu) \subset V(\lambda)  \otimes V(\mu)$ and $\nu \leq_{\Delta^\rmsc_X} \lambda + \mu$?

This indeed seems to be true in several cases, and in particular in case He.1a. Assuming the validity of Conjecture \ref{conj:stanley_cor}, it is easy to see that when $X$ is symmetric and both $\Phi$ and $\Phi_X$ are of type $\sfA$ a positive answer to the previous question follows from the saturation theorem \cite{KT}. See also \cite{GH}, where a similar question is considered for the symmetric varieties with restricted root system of type $\sfA$. However, the following example shows that the previous question does not always have a positive answer, at least when $\widetilde\Phi_X$ is non-reduced. Conjecture \ref{conj:hermitiano} is a partial attempt to fix the question.
\end{remark}

\begin{example} \label{ex:counterexample}
Consider the symmetric variety $X = \Spin(10)/\GL(5)$. The restricted root system $\widetilde \Phi_X$ is of type $\sfBC_2$, with base
$$
	\widetilde \Delta_X = \{\alpha_1 + 2\alpha_2 + \alpha_3, \; \alpha_3 + \alpha_4 + \alpha_5 \}.
$$
Let us consider the highest root $\theta = \alpha_1 + 2\alpha_2 + 2\alpha_3 + \alpha_4 + \alpha_5$: then $\theta \in \Lambda_X^+$ and $\theta \leq_X 2 \theta$. Moreover $V(\theta) \subset V(\theta)^{\otimes 2}$, however taking the product inside $\CC[X]$ we have $E_X(\theta) \not\subset E_X(\theta)^2$: indeed the unique non-zero equivariant projection
$$
	\pi : V(\theta) \otimes V(\theta) \rightarrow V(\theta)
$$
is given by the bracket on the Lie algebra $\mathfrak g \simeq V(\theta)$. It follows that $\pi(v \otimes v) = 0$ for all $v \in V(\theta)$, and we conclude by Proposition \ref{prop:supporto_invarianti}.

The same happens more generally for $\Spin(2p)/\GL(p)$ whenever $p$ is odd, and for $\sfE_6/\sfD_5 \times \CC^*$: indeed in all these cases the highest root is a spherical weight, and $V(\theta)$ occurs in $ V(\theta)^{\otimes 2}$ with multiplicity one. 

Both these examples are instances of exceptional symmetric varieties of Hermitian type, that is, symmetric varieties of Hermitian type with non-reduced restricted root system. There is another instance of such a symmetric variety, namely $X = \SL(p+q)/\mathrm S(\GL(p) \times \GL(q))$ when $p \neq q$: however in this case $V(\theta)$ occurs in $ V(\theta)^{\otimes 2}$ with multiplicity two, and the inclusion $E_X(\theta) \subset E_X(\theta)^2$ holds true inside $\CC[X]$.
\end{example}

\end{document}